\documentclass[reqno]{amsproc}

\usepackage[margin=1in]{geometry}
\usepackage{setspace, fullpage}
\geometry{letterpaper}
\usepackage{amsmath,amsthm,amscd,amssymb,bbm,wasysym,mathrsfs}
\usepackage{latexsym}
\usepackage[colorlinks,citecolor=red,pagebackref,hypertexnames=false]{hyperref}
\geometry{letterpaper}

\DeclareMathOperator{\supp}{supp}
\DeclareMathOperator{\Span}{span}

\numberwithin{equation}{section}
\theoremstyle{plain}
\newtheorem{theorem}{{\bf Theorem}}[section]
\newtheorem{lemma}[theorem]{{\bf Lemma}}
\newtheorem{corollary}[theorem]{Corollary}

\theoremstyle{definition}
\newtheorem{definition}[theorem]{{\bf Definition}}

\theoremstyle{remark}
\newtheorem{remark}[theorem]{Remark}
\numberwithin{equation}{section}

\begin{document}

\title{Polygonal equalities and virtual degeneracy in
$L_{p}$-spaces\footnote{Corresponding Author: Anthony Weston (westona@canisius.edu).}}

\dedicatory{In memory of Bernard Joseph Weston (1927 -- 2012)}

\author{Casey Kelleher}
\address{Department of Mathematics, University of California, Irvine, CA 92697, USA}
\email{clkelleh@math.uci.edu}
\author{Daniel Miller}
\address{Department of Mathematics, Cornell University, Ithaca, NY 14853, USA}
\email{dm635@cornell.edu}
\author{Trenton Osborn}
\address{Department of Mathematics, Baylor University, Waco, TX 76798, USA}
\email{trenton\_osborn@baylor.edu}
\author{Anthony Weston}
\address{Department of Mathematics and Statistics, Canisius College, Buffalo, NY 14208, USA}
\address{Department of Decision Sciences, University of South Africa, PO Box 392, UNISA 0003, South Africa}
\email{westona@canisius.edu}

\keywords{Isometry, strict negative type, generalized roundness, polygonal equality}

\subjclass[2010]{54E40, 46B04, 46C05}

\begin{abstract}
Suppose $0 < p \leq 2$ and that $(\Omega, \mu)$ is a measure space for which $L_{p}(\Omega, \mu)$
is at least two-dimensional. The central results of this paper provide a complete description of the subsets
of $L_{p}(\Omega, \mu)$ that have strict $p$-negative type. In order to do this we study non-trivial $p$-polygonal
equalities in $L_{p}(\Omega, \mu)$. These are equalities that can, after appropriate rearrangement and
simplification, be expressed in the form
\begin{eqnarray*}
\sum\limits_{j, i = 1}^{n} \alpha_{j} \alpha_{i} {\| z_{j} - z_{i} \|}_{p}^{p} & = & 0
\end{eqnarray*}
where $\{ z_{1}, \ldots, z_{n} \}$ is a subset of $L_{p}(\Omega, \mu)$ and $\alpha_{1}, \ldots, \alpha_{n}$ are
non-zero real numbers that sum to zero. We provide a complete classification of the non-trivial $p$-polygonal
equalities in $L_{p}(\Omega, \mu)$. The cases $p < 2$ and $p = 2$ are substantially different and are treated
separately. The case $p = 1$ generalizes an elegant result of Elsner, Han, Koltracht, Neumann and Zippin.

Another reason for studying non-trivial $p$-polygonal equalities in $L_{p}(\Omega, \mu)$ is due to
the fact that they preclude the existence of certain types of isometry. For example, our techniques show that if $(X,d)$ is a
metric space that has strict $q$-negative type for some $q \geq p$, then: (1) $(X,d)$ is not isometric to any
linear subspace $W$ of $L_{p}(\Omega, \mu)$ that contains a pair of disjointly supported non-zero vectors, and
(2) $(X,d)$ is not isometric to any subset of $L_{p}(\Omega, \mu)$ that has non-empty interior. Furthermore, in
the case $p = 2$, it also follows that $(X,d)$ is not isometric to any affinely dependent subset of
$L_{2}(\Omega, \mu)$. More generally, we show that if $(Y, \rho)$ is a
metric space whose generalized roundness $\wp$ is finite and if $(X,d)$ is a metric space that has strict
$q$-negative type for some $q \geq \wp$, then $(X,d)$ is not isometric to any metric subspace of $(Y, \rho)$
that admits a non-trivial $p_{1}$-polygonal equality for some $p_{1} \in [\wp, q]$. It is notable in all of these
statements that the metric space $(X,d)$ can, for instance, be any ultrametric space. As a result we obtain
new insights into sophisticated embedding theorems of Lemin and Shkarin.

We conclude the paper by constructing some pathological infinite-dimensional linear subspaces of $\ell_{p}$ that
do not have strict $p$-negative type.
\end{abstract}
\maketitle

\section{Introduction}
The starting point for this paper is the following intriguing result of Elsner \textit{et al}.\ \cite[Theorem 2.3]{Els}.

\begin{theorem}\label{elsner}
Let $\{ x_{k} \}_{k=1}^{n}$ and $\{ y_{k} \}_{k=1}^{n}$ be two collections of functions in $L_{1}(\Omega, \mu)$. Then
\begin{eqnarray}\label{els:one}
\sum\limits_{j_{1} < j_{2}} {\| x_{j_{1}} - x_{j_{2}} \|}_{1} +
\sum\limits_{i_{1} < i_{2}} {\| y_{i_{1}} - y_{i_{2}} \|}_{1}
& = &
\sum\limits_{j, i = 1}^{n} {\| x_{j} - y_{i} \|}_{1}
\end{eqnarray}
if and only if for almost every $\omega \in \Omega$, the numerical sets
$\{ x_{k}(\omega) \}_{k=1}^{n}$ and $\{ y_{k}(\omega) \}_{k=1}^{n}$ are identical.
\end{theorem}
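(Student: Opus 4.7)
The plan is to reduce the $L_1$ statement to a pointwise real-number identity via Fubini, and then prove that pointwise identity by a combinatorial rewriting that exposes the deficit as a perfect square. The reverse implication is almost formal: if for almost every $\omega$ the multisets $\{x_k(\omega)\}$ and $\{y_k(\omega)\}$ agree, then at such $\omega$ the sum $\sum_{j,i}|x_j(\omega)-y_i(\omega)|$ equals $\sum_{j,i}|x_j(\omega)-x_i(\omega)| = 2\sum_{j_1<j_2}|x_{j_1}(\omega)-x_{j_2}(\omega)|$, while the first two sums on the LHS each equal $\sum_{j_1<j_2}|x_{j_1}(\omega)-x_{j_2}(\omega)|$; integrating yields \eqref{els:one}.

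For the forward direction, I would show the stronger pointwise fact: for any real scalars $a_1,\dots,a_n,b_1,\dots,b_n$,
\begin{equation*}
\sum_{j,i=1}^{n}|a_j-b_i| - \sum_{j_1<j_2}|a_{j_1}-a_{j_2}| - \sum_{i_1<i_2}|b_{i_1}-b_{i_2}| \;\geq\; 0,
\end{equation*}
with equality precisely when the multisets $\{a_k\}_{k=1}^{n}$ and $\{b_k\}_{k=1}^{n}$ coincide. The key trick is the layer-cake representation $|a-b|=\int_{\mathbb R}\bigl|\mathbbm{1}_{a\le t}-\mathbbm{1}_{b\le t}\bigr|\,dt$, which converts every absolute value into an integral of $\{0,1\}$-valued quantities. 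Setting $u(t)=\#\{k:a_k\le t\}$ and $v(t)=\#\{k:b_k\le t\}$, a simple count gives, at each $t$,
\begin{align*}
\sum_{j_1<j_2}\bigl|\mathbbm{1}_{a_{j_1}\le t}-\mathbbm{1}_{a_{j_2}\le t}\bigr| &= u(t)\bigl(n-u(t)\bigr),\\
\sum_{i_1<i_2}\bigl|\mathbbm{1}_{b_{i_1}\le t}-\mathbbm{1}_{b_{i_2}\le t}\bigr| &= v(t)\bigl(n-v(t)\bigr),\\
\sum_{j,i}\bigl|\mathbbm{1}_{a_j\le t}-\mathbbm{1}_{b_i\le t}\bigr| &= u(t)\bigl(n-v(t)\bigr)+v(t)\bigl(n-u(t)\bigr).
\end{align*}
Subtracting the first two from the third collapses neatly to $(u(t)-v(t))^{2}$, whence the deficit equals $\int_{\mathbb R}(u(t)-v(t))^{2}\,dt$.

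Applying this pointwise in $\omega\in\Omega$ to $a_k=x_k(\omega)$, $b_k=y_k(\omega)$ and integrating, Fubini gives
\begin{equation*}
\sum_{j,i}\|x_j-y_i\|_{1} - \sum_{j_1<j_2}\|x_{j_1}-x_{j_2}\|_{1} - \sum_{i_1<i_2}\|y_{i_1}-y_{i_2}\|_{1} = \int_{\Omega}\!\int_{\mathbb R}\bigl(u(t,\omega)-v(t,\omega)\bigr)^{2}\,dt\,d\mu(\omega).
\end{equation*}
Hypothesis \eqref{els:one} forces this non-negative integrand to vanish a.e., so $u(\cdot,\omega)\equiv v(\cdot,\omega)$ for a.e.\ $\omega$; since these are the step functions associated with the empirical distributions of $\{x_k(\omega)\}$ and $\{y_k(\omega)\}$, this is equivalent to equality of the two multisets.

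The only genuinely non-routine step is recognizing that the pointwise deficit telescopes to the perfect square $(u-v)^{2}$; the layer-cake trick is the natural device that linearizes the problem and makes this algebraic identity transparent. Everything else is bookkeeping with Fubini and a counting argument for indicator functions.
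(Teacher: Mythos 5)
Your argument is correct and is genuinely different from the route the paper takes. The paper does not prove Theorem \ref{elsner} directly: it recovers it as the case $p=1$, unit weights, of Corollary \ref{els:cor0}, which in turn rests on the general classification of Theorem \ref{vd:simplex} (equality in the integrated inequality forces the pointwise simplex gaps to vanish a.e., and then strict $p$-negative type of the scalar field for $p<2$ --- Corollary \ref{cor:simplex}, ultimately via the Li--Weston strictness theorem --- forces the pointwise simplices to be degenerate) together with Lemma \ref{d:perm}, which translates degeneracy of a unit-weight $(n,n)$-simplex into the existence of a permutation matching the two numerical sets. You share the first step (pointwise nonnegativity of the integrand plus Fubini), but you replace the abstract appeal to strict negative type of the line by a self-contained computation: the layer-cake identity $|a-b|=\int_{\mathbb R}\bigl|\mathbbm{1}_{a\le t}-\mathbbm{1}_{b\le t}\bigr|\,dt$ and the counting identity that collapses the scalar deficit to $\int_{\mathbb R}(u(t)-v(t))^2\,dt$, which simultaneously proves the inequality and identifies the equality case as coincidence of the empirical distribution functions, i.e.\ of the multisets. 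What your approach buys is an elementary, quantitative proof with an explicit formula for the deficit (and it even extends to positive weights, since the same computation gives $(U(t)-V(t))^2$ with $U,V$ the weighted counting functions); what the paper's approach buys is generality --- it handles arbitrary non-degenerate signed simplices, all $0<p<2$, and complex scalars in one stroke, and it is what feeds the isometry results later in the paper.

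One caveat: your pointwise lemma is stated for real scalars only, whereas the paper's framework (see the definition of numerical sets and the blanket statement in Section \ref{sec:4}) is meant to cover complex-valued $L_1$ as well, and the paper's proof of Theorem \ref{vd:simplex} explicitly works over $\mathbb C$. To cover complex scalars you would need one more averaging step, e.g.\ $|z|=\tfrac14\int_0^{2\pi}\bigl|\Re(e^{i\theta}z)\bigr|\,d\theta$, which reduces the complex deficit to an average over directions of real deficits; equality then forces, for a.e.\ $\theta$ and a.e.\ $\omega$, equality of the projected multisets, and a short additional argument (e.g.\ that a finite signed atomic measure whose one-dimensional projections vanish for a.e.\ direction is zero) upgrades this to equality of the complex multisets. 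This is a minor extension rather than a flaw, but as written your proof establishes the theorem only for real-valued functions.
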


It is helpful to recall that a \textit{numerical set} is just a finite collection of possibly repeated
(real or complex) numbers. Two numerical sets
$\{ \zeta_{k} \}_{k=1}^{n}$ and $\{ \xi_{k} \}_{k=1}^{n}$ are said to be \textit{identical} if
there exists a permutation $\pi$ of $(1,2, \ldots, n)$ such that
$\zeta_{\pi(k)} = \xi_{k}$ for each $k, 1 \leq k \leq n$.

Our interest in Theorem \ref{elsner} is that it is related to the problem of
characterizing all cases of non-trivial equality in the $1$-negative type
inequalities for $L_{1}(\Omega, \mu)$ (see Definition \ref{neg:gen} (1)). For instance, if we assume in the statement of
Theorem \ref{elsner} that the functions $x_{1}, \ldots, x_{n}, y_{1}, \ldots, y_{n}$ are
pairwise distinct, then the equality (\ref{els:one}) may be rewritten in the form
\begin{eqnarray}\label{els:two}
\sum\limits_{j,i = 1}^{2n} \alpha_{j} \alpha_{i} {\| z_{j} - z_{i} \|}_{1}
& = & 0
\end{eqnarray}
where the real numbers $\alpha_{1}, \ldots, \alpha_{2n}$ sum to zero with each $\alpha_{k} \in \{ -1, 1\}$
and $z_{k}$ is an $x_{j}$ or $y_{i}$ depending on the sign of $\alpha_{k}$.
In this way we see that Theorem \ref{elsner} provides specific instances
of non-trivial equality in the $1$-negative type inequalities for $L_{1}(\Omega, \mu)$.

In this paper we present an in-depth study of non-trivial $p$-polygonal equalities in $L_{p}$-spaces,
$0 < p < \infty$. These are equalities that can, after appropriate rearrangement and simplification,
be expressed in the form
\begin{eqnarray}\label{nt:eq}
\sum\limits_{j, i = 1}^{n} \alpha_{j} \alpha_{i} {\| z_{j} - z_{i} \|}_{p}^{p} & = & 0
\end{eqnarray}
where $\{ z_{1}, \ldots, z_{n} \}$ is a subset of $L_{p}(\Omega, \mu)$ and
$\alpha_{1}, \ldots, \alpha_{n}$ are non-zero real numbers such that $\alpha_{1} + \cdots + \alpha_{n} = 0$.
Throughout this paper all measures are non-trivial and positive. Moreover, all $L_{p}$-spaces are endowed with the
usual (quasi-) norm and are assumed to be at least two-dimensional.

Equalities of the form (\ref{nt:eq}) especially enlightening if $0 < p \leq 2$. In this case,
$L_{p}(\Omega, \mu)$ has $p$-negative type but it does not have $q$-negative type for any $q > p$.
So if $0 < p \leq 2$ we may view each non-trivial $p$-polygonal equality as being an instance
of non-trivial equality in a $p$-negative type inequality for $L_{p}(\Omega, \mu)$,
and \textit{vice-versa}. Theorems \ref{vd:simplex} and \ref{thm:H} classify all non-trivial
$p$-polygonal equalities in $L_{p}(\Omega, \mu)$, $0 < p \leq 2$.
Corollaries \ref{Lp} and \ref{cor:H1} then classify the subsets of $L_{p}(\Omega, \mu)$, $0 < p \leq 2$,
that have strict $p$-negative type. Our approach in the case $0 < p < 2$ is based on a new property
of $L_{p}$-spaces that we call virtual degeneracy. Specialization to the case $p=1$ reveals a more
general form of Theorem \ref{elsner}. The Hilbert space case $p = 2$ is notably different on account
of the parallelogram identity and is thus treated separately. In the case $p > 2$
we obtain partial results about the collection of all non-trivial $p$-polygonal equalities in
$L_{p}(\Omega, \mu)$.

Another reason for studying $p$-polygonal equalities in $L_{p}$-spaces is that
they preclude the existence of certain types of isometry. Theorem \ref{embed} shows that if $0 < p < \infty$ and if $(X,d)$ is a
metric space that has strict $q$-negative type for some $q \geq p$, then: (1) $(X,d)$ is not isometric to any linear subspace $W$
of $L_{p}(\Omega, \mu)$ that contains a pair of disjointly supported non-zero vectors, and
(2) $(X,d)$ is not isometric to any subset of $L_{p}(\Omega, \mu)$ that has non-empty interior. Furthermore, in
the case $p = 2$, it follows that we also have: (3) $(X,d)$ is not isometric to any affinely dependent subset of $L_{2}(\Omega, \mu)$.
These theorems are instances of a more general embedding principle (Theorem \ref{iso:emb2}):
If $(Y, \rho)$ is a metric space whose generalized roundness $\wp$ (see Definition \ref{neg:gen} (3))
is finite and if $(X,d)$ is a metric space that has strict $q$-negative type for some $q \geq \wp$, then
$(X,d)$ is not isometric to any metric subspace of $(Y, \rho)$ that admits a non-trivial $p_{1}$-polygonal
equality for some $p_{1} \in [\wp, q]$. It is notable in all of these instances
that the metric space $(X,d)$ can, for example, be any ultrametric space. This provides new insights into
embedding theorems of Lemin \cite{Lem} and Shkarin \cite{Shk}.

As a basic technique used throughout the paper we do not deal directly with equalities of the form (\ref{nt:eq}).
We instead work with non-trivial weighted generalized roundness equalities with exponent $p$. These
are an equivalent family of equalities that can, after rearrangement and simplification, be expressed in the form
\begin{eqnarray}\label{gr:eq}
\sum\limits_{j_{1} < j_{2}} m_{j_{1}}m_{j_{2}} {\| x_{j_{1}} - x_{j_{2}}\|}_{p}^{p} +
\sum\limits_{i_{1} < i_{2}} n_{i_{1}}n_{i_{2}} {\| y_{i_{1}} - y_{i_{2}}\|}_{p}^{p}
&   =  & \sum\limits_{j, i} m_{j}n_{i} {\| x_{j} - y_{i} \|}_{p}^{p}.
\end{eqnarray}
where $x_{1}, \ldots, x_{s}, y_{1}, \ldots, y_{t} \in L_{p}(\Omega, \mu)$,
$\{ x_{j} : 1 \leq j \leq s \} \cap \{ y_{i} : 1 \leq i \leq t \} = \varnothing$,
$m_{1}, \ldots, m_{s} > 0$, $n_{1}, \ldots, n_{t} > 0$, and $m_{1} + \cdots + m_{s} = n_{1} + \cdots + n_{t}$.
We derive the precise transition between equalities of the form (\ref{nt:eq}) and (\ref{gr:eq}) in Lemma \ref{SL1}
and Theorem \ref{2.3}.

The organization of the paper is as follows. In Section \ref{sec:2} we recall the notions
of negative type and generalized roundness. In Section \ref{sec:3} we introduce the notion of non-trivial $p$-polygonal
equalities in metric spaces and examine their bearing on the existence of isometries. In Section \ref{sec:4} we focus
on non-trivial $p$-polygonal equalities in $L_{p}$-spaces ($p \not= 2$) and introduce the notion of virtual degeneracy.
All results in Section \ref{sec:4} are equally valid for real or complex $L_{p}$-spaces.
Section \ref{sec:5} is dedicated to studying $2$-polygonal equalities in real or complex inner product and Hilbert spaces.
Section \ref{sec:6} considers virtually degenerate linear subspaces of
$L_{p}$-spaces, $0 < p < \infty$. Such linear subspaces do not have $q$-negative
type for any $q > p$. Lemma \ref{vds:lem} and Theorem \ref{inf:vds} provide ways to construct
virtually degenerate linear subspaces of $L_{p}(\Omega, \mu)$, $0 < p < \infty$.
Section \ref{sec:7} completes the paper with a discussion of open problems.
Throughout the paper we use $\mathbb{N}$ to denote the set of all positive integers.
Whenever sums are indexed over the empty set they are defined to be $0$.

\section{Negative type and generalized roundness}\label{sec:2}
The notions of negative type and generalized roundness recalled below in Definition \ref{neg:gen}
were formally introduced and studied by Menger \cite{Men}, Schoenberg \cite{Sc1, Sc2, Sc3} and Enflo \cite{En2}.
Menger and Schoenberg were interested in determining which metric spaces can be isometrically
embedded into a Hilbert space. Enflo's interest, on the other hand, was to construct a
separable metric space that admits no uniform embedding into any Hilbert space.
More recently, there has been interest in the notion of ``strict'' $p$-negative type, particularly
as it pertains to the geometry of finite metric spaces. In the present work we will see that
strict $p$-negative type can also have a role to play in certain infinite-dimensional settings.
Papers that have been instrumental in developing properties of metrics of strict $p$-negative type
include \cite{Hj1, Hj2, Dou, Hli, Wol, San}.

\begin{definition}\label{neg:gen} Let $p \geq 0$ and let $(X,d)$ be a metric space.
\begin{enumerate}
\item $(X,d)$ has $p$-{\textit{negative type}} if and only if for
all finite subsets $\{z_{1}, \ldots , z_{n} \} \subseteq X$ and all scalar $n$-tuples
$\boldsymbol{\alpha} = (\alpha_{1}, \ldots, \alpha_{n}) \in \mathbb{R}^{n}$ that satisfy
$\alpha_{1} + \cdots + \alpha_{n} = 0$, we have:
\begin{eqnarray}\label{p:neg}
\sum\limits_{j,i =1}^{n} d(z_{j},z_{i})^{p} \alpha_{j} \alpha_{i} & \leq & 0.
\end{eqnarray}
As a notational aid we set $\Pi_{0} = \{ \boldsymbol{\alpha} \in \mathbb{R}^{n} : \alpha_{1} + \cdots + \alpha_{n} = 0 \}$.

\item $(X,d)$ has \textit{strict} $p$-{\textit{negative type}} if and only if it has $p$-negative type
and the inequalities (\ref{p:neg}) are strict except in the trivial case $\boldsymbol{\alpha} = \boldsymbol{0}$.

\item The \textit{generalized roundness} (or \textit{supremal negative type}) of
$(X,d)$ is defined to be the quantity $\wp(X) = \sup \{ q : (X,d) \text{ has } q\text{-negative type} \}$.
\end{enumerate}
\end{definition}

It is worth noting that if we set $D_{p} = (d(z_{j}, z_{i})^{p})_{j,i}$ and let $\langle \cdot, \cdot\rangle$ denote the usual inner
product on $\mathbb{R}^{n}$, the inequalities (\ref{p:neg}) may be expressed more succinctly as:
$\langle D_{p}\boldsymbol{\alpha}, \boldsymbol{\alpha} \rangle \leq 0$ for all $\boldsymbol{\alpha} \in \Pi_{0}$.

Negative type holds on closed intervals by a result of Schoenberg \cite[Theorem 2]{Sc2}.
Indeed, the set of all values of $p$ for which a given metric space $(X,d)$ has $p$-negative type is always an interval of
the form $[0, \wp(X)]$ or $[0, \infty)$. We allow the case $\wp(X) = 0$, in which case the interval
degenerates to $\{ 0 \}$. Li and Weston \cite{Hli} have obtained a version of Schoenberg's theorem that deals with strict
negative type.

\begin{theorem}[Li and Weston \cite{Hli}]\label{LW1}
Let $(X,d)$ be a metric space. If $(X,d)$ has $p$-negative type for some $p > 0$, then
$(X,d)$ has strict $q$-negative type for all $q$ such that $0 \leq q < p$.
\end{theorem}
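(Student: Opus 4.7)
The plan is to exploit the subordination identity
$$t^{q} \;=\; \frac{q/p}{\Gamma(1-q/p)} \int_{0}^{\infty} \bigl(1 - e^{-st^{p}}\bigr)\, s^{-1-q/p}\, ds, \qquad t \geq 0,$$
valid for $0 < q < p$, in concert with Schoenberg's classical equivalence between $p$-negative type of $(X,d)$ and positive semidefiniteness of the Gaussian-like kernel $(z,w) \mapsto e^{-s d(z,w)^{p}}$ on every finite subset, for every $s > 0$. The degenerate case $q = 0$ can be dispatched directly: for distinct points $z_{1}, \ldots, z_{n}$ and $\boldsymbol{\alpha} \in \Pi_{0} \setminus \{\boldsymbol{0}\}$,
$$\sum_{j,i=1}^{n} d(z_{j},z_{i})^{0}\, \alpha_{j}\alpha_{i} \;=\; \sum_{j \neq i} \alpha_{j}\alpha_{i} \;=\; \Bigl(\sum_{j}\alpha_{j}\Bigr)^{2} - \sum_{j}\alpha_{j}^{2} \;=\; -\sum_{j}\alpha_{j}^{2} \;<\; 0.$$

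Next, fix $q \in (0, p)$, distinct points $\{z_{1}, \ldots, z_{n}\} \subseteq X$, and $\boldsymbol{\alpha} \in \Pi_{0} \setminus \{\boldsymbol{0}\}$. I would substitute the subordination identity into each term $d(z_{j},z_{i})^{q}$, swap the finite sum with the integral, and use $\sum_{j}\alpha_{j} = 0$ to annihilate the constant in the integrand. This yields
$$\sum_{j,i} d(z_{j},z_{i})^{q}\, \alpha_{j} \alpha_{i} \;=\; -\frac{q/p}{\Gamma(1-q/p)} \int_{0}^{\infty} G(s)\, s^{-1-q/p}\, ds,$$
where $G(s) := \sum_{j,i} e^{-s d(z_{j},z_{i})^{p}}\, \alpha_{j}\alpha_{i}$. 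Integrability at $s \to 0^{+}$ is ensured by the estimate $1 - e^{-st^{p}} \sim st^{p}$, which cancels the singularity of $s^{-1-q/p}$, while integrability at $s \to \infty$ is immediate from $q > 0$.

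It then remains to verify that the integral on the right is strictly positive. By Schoenberg's theorem the kernel $e^{-s d^{p}}$ is positive semidefinite on every finite subset for every $s > 0$, hence $G(s) \geq 0$ throughout $(0, \infty)$. For strictness, the key observation is that as $s \to \infty$ the off-diagonal entries $e^{-s d(z_{j},z_{i})^{p}}$ (with $j \neq i$) vanish because the $z_{j}$ are distinct, while the diagonal entries equal $1$. Thus $G(s) \to \sum_{j}\alpha_{j}^{2} > 0$, and by continuity $G(s)$ stays bounded below by a positive constant on some tail $(s_{0}, \infty)$. Combined with the non-negativity on $(0, s_{0})$, this forces $\int_{0}^{\infty} G(s)\, s^{-1-q/p}\, ds > 0$, and so $\sum_{j,i} d(z_{j}, z_{i})^{q}\alpha_{j}\alpha_{i} < 0$, as required.

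The main obstacle is the appeal to the Schoenberg correspondence between $p$-negative type and positive semidefiniteness of the exponential kernels $e^{-s d^{p}}$: it is this step which converts the quadratic-form hypothesis on $D_{p}$ into the pointwise non-negativity of $G(s)$ that powers the integral argument. Once that bridge is in place, the subordination identity, Fubini-style interchange (trivial here because the sum is finite), and the large-$s$ asymptotic of $G(s)$ combine routinely to deliver strict $q$-negative type.
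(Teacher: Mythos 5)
Your argument is correct, and it is worth noting that the paper itself does not prove this statement --- it is quoted from Li and Weston \cite{Hli} --- so there is no internal proof to compare against; what you give is a complete, self-contained derivation along classical Schoenberg lines. The subordination formula $t^{q}=\frac{q/p}{\Gamma(1-q/p)}\int_{0}^{\infty}(1-e^{-st^{p}})\,s^{-1-q/p}\,ds$ is exactly the device behind the non-strict interval result the paper invokes from \cite[Theorem 2]{Sc2}, and your strictness upgrade is sound: positive semidefiniteness of the kernels $e^{-sd^{p}}$ for every $s>0$ gives $G(s)\geq 0$, distinctness of the $z_{j}$ forces $G(s)\to\sum_{j}\alpha_{j}^{2}>0$ as $s\to\infty$, and continuity then bounds $G$ below on a tail, so the weighted integral is strictly positive while the prefactor $\frac{q/p}{\Gamma(1-q/p)}$ is positive because $0<q/p<1$. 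The interchange of the finite sum with the integral and the integrability at both endpoints are handled correctly. Two small remarks: (i) the bridge you single out --- $p$-negative type of $(X,d)$ is equivalent to positive semidefiniteness of $e^{-sd^{p}}$ for all $s>0$ --- is the classical Schoenberg (Berg--Christensen--Ressel) equivalence, so it is a legitimate external ingredient of the same vintage as the results the paper already cites, and indeed the published proof in \cite{Hli} travels essentially this same route; (ii) your $q=0$ computation tacitly uses the convention $d(z,z)^{0}=0$ (so that $d^{0}$ is the discrete metric), which is the convention in force in this literature and is precisely what makes strict $0$-negative type hold for every metric space.
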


In the case of finite metric spaces a more definitive statement holds.

\begin{theorem}[Li and Weston \cite{Hli}]\label{LW2}
A finite metric space $(X,d)$ has strict $p$-negative type if and only if $p < \wp(X)$.
\end{theorem}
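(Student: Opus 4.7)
\medskip

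The plan is to prove the two directions separately. The forward implication $p < \wp(X) \Rightarrow$ strict $p$-negative type is essentially immediate from Theorem \ref{LW1}, while the reverse implication requires a compactness--continuity argument that genuinely exploits finiteness of $X$.

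\medskip

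\textbf{Forward direction.} Suppose $p < \wp(X)$. By definition of $\wp(X)$ as the supremum of values at which $(X,d)$ has negative type, I can select $q$ with $p < q \leq \wp(X)$ such that $(X,d)$ has $q$-negative type (take any such $q$ strictly between $p$ and $\wp(X)$ if $\wp(X) > p$). Theorem \ref{LW1} then delivers strict $r$-negative type for every $0 \leq r < q$; taking $r = p$ completes this direction.

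\medskip

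\textbf{Reverse direction.} Assume $X = \{z_{1}, \ldots, z_{N}\}$ is finite and that $(X,d)$ has strict $p$-negative type. It suffices to produce $\delta > 0$ for which $(X,d)$ has $(p+\delta)$-negative type. I would set
\begin{equation*}
F(\boldsymbol{\alpha}, q) \;=\; \sum_{j,i = 1}^{N} d(z_{j}, z_{i})^{q}\, \alpha_{j} \alpha_{i},
\end{equation*}
viewed as a continuous function on $\Pi_{0} \times [p, \infty)$ with $\Pi_{0} \subseteq \mathbb{R}^{N}$. The case $N = 1$ is trivial, so assume $N \geq 2$ and restrict $\boldsymbol{\alpha}$ to the compact unit sphere $S = \{\boldsymbol{\alpha} \in \Pi_{0} : \|\boldsymbol{\alpha}\| = 1\}$. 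Strict $p$-negative type means $F(\boldsymbol{\alpha}, p) < 0$ throughout $S$, and compactness of $S$ upgrades this to a uniform bound: $c := -\max_{\boldsymbol{\alpha} \in S} F(\boldsymbol{\alpha}, p) > 0$. Uniform continuity of $F$ on $S \times [p, p+1]$ then yields $\delta \in (0,1]$ with $F(\boldsymbol{\alpha}, q) \leq -c/2$ for all $\boldsymbol{\alpha} \in S$ and $q \in [p, p + \delta]$. Since $F(\cdot, q)$ is a quadratic form in $\boldsymbol{\alpha}$, rescaling shows $F(\boldsymbol{\alpha}, q) \leq -(c/2)\|\boldsymbol{\alpha}\|^{2} \leq 0$ for every $\boldsymbol{\alpha} \in \Pi_{0}$ and every $q \in [p, p + \delta]$. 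Hence $(X,d)$ has $(p+\delta)$-negative type and $\wp(X) \geq p + \delta > p$.

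\medskip

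There is no serious obstacle; the only care needed is in setting up the compactness argument (ensuring $S$ is non-empty and that a single sphere handles all subsets of $X$ simultaneously via extension by zeros). The decisive feature is that $N$ is fixed, so $\Pi_{0}$ is a single finite-dimensional subspace and $S$ is compact --- this is exactly the point at which the hypothesis of finiteness is used, and it is also what prevents the analogous statement from holding verbatim in the infinite setting.
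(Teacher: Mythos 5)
Your proposal is correct. Note first that the paper does not prove this statement at all: it is imported verbatim from Li and Weston \cite{Hli} (the paper's Theorem \ref{LW2}), so there is no in-paper argument to compare against; your write-up is in the spirit of the standard proof of this fact. The forward direction is exactly as you say: $p < \wp(X)$ gives, by the definition of the supremum, some $q > p$ at which $(X,d)$ has $q$-negative type, and Theorem \ref{LW1} then yields strict $p$-negative type. The reverse direction is also sound: since $X$ is finite, every instance of inequality (\ref{p:neg}) for a subset of $X$ is obtained from the full point set $\{z_{1},\ldots,z_{N}\}$ by padding $\boldsymbol{\alpha}$ with zeros, so it suffices to control the single quadratic form $F(\cdot,q)$ on $\Pi_{0}\subseteq\mathbb{R}^{N}$; strictness on the compact sphere $S$ gives a uniform negative bound, uniform continuity in the exponent (the diagonal terms vanish and the off-diagonal distances are positive, so $q\mapsto d(z_j,z_i)^{q}$ is continuous) pushes this to $[p,p+\delta]$, and homogeneity of degree two in $\boldsymbol{\alpha}$ extends the bound from $S$ to all of $\Pi_{0}$, giving $\wp(X)\geq p+\delta>p$. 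Your closing remark is also the right one: the only place finiteness enters is that a single finite-dimensional sphere controls all configurations, which is precisely why the statement fails for infinite spaces (as the paper notes after Theorem \ref{LW2}, citing examples in \cite{Dou} and \cite{Hli}).
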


Theorem \ref{LW2} is specific to finite metric spaces. The supremal negative type of
an infinite metric space may or may not be strict. This may be seen from examples in \cite{Dou} and \cite{Hli}.

It is important to note that if a metric space $(X,d)$ has $\wp$-negative type but not strict
$\wp$-negative type for some $\wp > 0$, then $\wp = \wp(X)$ as a corollary of Theorem \ref{LW1}.
Moreover, under these conditions, there must exist a subset $\{ z_{1}, \ldots, z_{n} \} \subseteq X$
and an $n$-tuple $\boldsymbol{\alpha} = (\alpha_{1}, \ldots, \alpha_{n}) \in \Pi_{0} \setminus \{ \boldsymbol{0} \}$
such that
\begin{eqnarray}\label{p:negzero}
\sum\limits_{j,i =1}^{n} d(z_{j},z_{i})^{\wp} \alpha_{j} \alpha_{i} & = & 0.
\end{eqnarray}
Clearly, if some $\alpha_{k} = 0$ in this setting, then we may discard the pair $(z_{k}, \alpha_{k})$
from the configuration without disrupting the underlying equalities. In such situations, we may
therefore assume that every $\alpha_{k}$ is non-zero.

The following fundamental fact is central to the development of the rest of this paper.

\begin{theorem}[Schoenberg \cite{Sc3}]\label{schoenberg}
Let $0 < p \leq 2$ and suppose that $(\Omega, \mu)$ is a measure space. Then $L_{p}(\Omega, \mu)$
has $p$-negative type but it does not have $q$-negative type for any $q > p$. In other words, $\wp(L_{p}) = p$.
\end{theorem}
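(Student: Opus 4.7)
The plan is to establish the two halves of Theorem~\ref{schoenberg} separately: first the $p$-negative type of $L_p(\Omega, \mu)$ for $0 < p \leq 2$, then the sharpness assertion that $q$-negative type fails for every $q > p$.

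For the positive statement, I would split on whether $p = 2$ or $0 < p < 2$. In the Hilbert case $p = 2$, the identity $\|f - g\|_2^2 = \|f\|_2^2 + \|g\|_2^2 - 2\langle f, g\rangle$ gives, whenever $\alpha_1 + \cdots + \alpha_n = 0$,
$$\sum_{j,i=1}^n \alpha_j \alpha_i \|z_j - z_i\|_2^2 = -2 \Bigl\|\sum_{k=1}^n \alpha_k z_k\Bigr\|_2^2 \leq 0.$$
For $0 < p < 2$ I would invoke the classical integral representation
$$|t|^p = c_p \int_0^\infty \frac{1 - \cos(t\xi)}{\xi^{1+p}} \, d\xi, \quad t \in \mathbb{R},$$
valid for a suitable constant $c_p > 0$. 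Integrating this pointwise identity against $\mu$ and applying Fubini yields an analogous representation for $\|z_j - z_i\|_p^p$. Since $\sum_{j,i} \alpha_j \alpha_i = (\sum_k \alpha_k)^2 = 0$, the constant contribution vanishes, and the addition formula $\cos(\xi(a-b)) = \cos(\xi a)\cos(\xi b) + \sin(\xi a)\sin(\xi b)$ converts the surviving term into
$$-c_p \int_\Omega \int_0^\infty \xi^{-1-p} \biggl[\Bigl(\sum_k \alpha_k \cos(\xi z_k(\omega))\Bigr)^2 + \Bigl(\sum_k \alpha_k \sin(\xi z_k(\omega))\Bigr)^2\biggr] d\xi\, d\mu(\omega),$$
which is manifestly non-positive.

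For the sharpness, I would exhibit an explicit finite witness in $L_p(\Omega, \mu)$. In the standard setting where $(\Omega, \mu)$ admits arbitrarily many pairwise disjoint sets of positive finite measure, choose normalized indicator-type vectors $e_1, \ldots, e_n \in L_p(\Omega, \mu)$ with pairwise disjoint supports, so that $\|e_k\|_p = 1$ and $\|e_i - e_j\|_p = 2^{1/p}$ for $i \neq j$. Setting $z_0 = 0$, $z_k = e_k$, and weights $\alpha_0 = -n$, $\alpha_1 = \cdots = \alpha_n = 1$, a direct computation yields
$$\sum_{j,i=0}^n \alpha_j \alpha_i \|z_j - z_i\|_p^q = -2n^2 + n(n-1) \cdot 2^{q/p}.$$
Because $q > p$ gives $2^{q/p - 1} > 1$, this quantity is strictly positive once $n$ exceeds $2^{q/p-1}/(2^{q/p-1} - 1)$, precluding $q$-negative type.

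The main technical point is justifying the representation of $|t|^p$ and the Fubini interchange against $\mu$; once those classical facts about the characteristic function of symmetric $p$-stable laws are in hand, the remainder of the argument is essentially algebraic. A parallel route for the positive direction would introduce symmetric $p$-stable random variables to embed $L_p$ isometrically (up to a scaling constant) into $L_2$ of a larger probability space and reduce to the Hilbert case, but the Fourier-analytic route above sidesteps the probability-theoretic machinery.
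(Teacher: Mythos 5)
Your positive half is essentially the classical Schoenberg argument (the paper itself offers no proof, citing \cite{Sc3}), and it is sound: the Hilbert-space identity for $p=2$ is correct, and for $0<p<2$ the representation $|t|^p=c_p\int_0^\infty(1-\cos(t\xi))\xi^{-1-p}\,d\xi$ together with Tonelli (each summand's double integral is finite, and the split into $\bigl(\sum_k\alpha_k\cos(\xi z_k(\omega))\bigr)^2+\bigl(\sum_k\alpha_k\sin(\xi z_k(\omega))\bigr)^2$ is performed inside the integrand, so no divergent pieces are ever separated) does give $p$-negative type. One small omission: the paper allows complex scalars, so you should add a line reducing the complex case to the real one (e.g.\ identify $\mathbb{C}$ with $\ell_2^{(2)}$, which embeds isometrically into a real $L_p$ for $p\le 2$, or use the corresponding representation of $|z|^p$ for complex $z$).

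The genuine gap is in the sharpness half. Your witness $\{0,e_1,\dots,e_n\}$ with weights $(-n,1,\dots,1)$ needs $n$ pairwise disjointly supported vectors, and for a \emph{fixed} $n$ it only contradicts $q$-negative type when $2^{q/p}>2n/(n-1)$, i.e.\ $q>p\log_2\bigl(2n/(n-1)\bigr)$; you must let $n\to\infty$ to reach every $q>p$. The theorem, however, is stated for an arbitrary measure space, and the paper's standing convention only assumes $L_p(\Omega,\mu)$ is at least two-dimensional; purely atomic spaces with finitely many atoms, such as $\ell_p^{(2)}$ or $\ell_p^{(3)}$ (which the paper actually uses, e.g.\ in Corollary \ref{aff:Lp}), do not admit arbitrarily many disjoint sets of positive finite measure, so your hypothesis ``admits arbitrarily many pairwise disjoint sets'' silently narrows the statement and leaves an interval $(p,\,p\log_2(2n/(n-1))]$ uncovered there. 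The fix is to use a better witness: take two disjointly supported unit vectors $u,v$ (these exist whenever $\dim L_p\ge 2$) and the parallelogram $z_1=0$, $z_2=u+v$, $z_3=u$, $z_4=v$ with weights $(1,1,-1,-1)$. Then $\sum_{j,i}\alpha_j\alpha_i\|z_j-z_i\|_p^q=2\bigl(\|u+v\|_p^q+\|u-v\|_p^q-2\|u\|_p^q-2\|v\|_p^q\bigr)=4\bigl(2^{q/p}-2\bigr)>0$ for every $q>p$, which kills $q$-negative type in full generality. This is exactly the configuration behind the paper's equality (\ref{22:rem}) and Lemma \ref{vd:ex}: at exponent $p$ it is a non-trivial $p$-polygonal equality, and Lemma \ref{s:equality} then yields the same conclusion.
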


\begin{remark}\label{Kold}
In the statement of Theorem \ref{schoenberg} we are assuming that $L_{p}(\Omega, \mu)$ is at least
$2$-dimensional. It is notable that if $2 < p \leq \infty$ and if $L_{p}(\Omega, \mu)$ is at least
$3$-dimensional, then $L_{p}(\Omega, \mu)$ does not have $q$-negative type for any $q > 0$. This follows from
theorems of Dor \cite{Dor}, Misiewicz \cite{Mis} and Koldobsky \cite{Kol}.
\end{remark}


\section{Polygonal equalities in metric spaces}\label{sec:3}

In order to address cases of equality in (\ref{p:neg}) it is helpful to reformulate
Definition \ref{neg:gen} in terms of signed $(s,t)$-simplices and the corresponding $p$-simplex ``gaps,''
the notions of which we now introduce.

\begin{definition}\label{S1}
Let $X$ be a set and suppose that $s,t > 0$ are integers.
A \textit{signed $(s,t)$-simplex in $X$} is a collection of (not necessarily distinct) points
$x_{1}, \ldots, x_{s}, y_{1}, \ldots, y_{t} \in X$ together with a corresponding collection
of real numbers $m_{1}, \ldots, m_{s}, n_{1}, \ldots, n_{t}$ that satisfy $m_{1} + \cdots + m_{s}
= n_{1} + \cdots + n_{t}$. Such a configuration of points and real numbers will be denoted by
$D = [x_{j}(m_{j});y_{i}(n_{i})]_{s,t}$ and will simply be called a \textit{simplex} when
no confusion can arise.
\end{definition}

Simplices with weights on the vertices were introduced by Weston \cite{We1} to study the generalized
roundness of finite metric spaces. In \cite{We1} the author only considers positive weights. The
approach being taken here appears to be more general but it is, in fact, equivalent by Lemma \ref{SL1}.
The basis for the following definition is derived from the original formulation of generalized
roundness that was introduced by Enflo \cite{En2} in order to address a problem of Smirnov concerning
the uniform structure of Hilbert spaces.

\begin{definition}\label{S4}
Let $(X,d)$ be a metric space and suppose that $p$ is a non-negative real number.
For each signed $(s,t)$-simplex $D = [x_{j}(m_{j});y_{i}(n_{i})]_{s,t}$ in $X$ we define
\begin{eqnarray*}
\gamma_{p}(D) & = &
\sum\limits_{j,i = 1}^{s,t} m_{j}n_{i}d(x_{j},y_{i})^{p} -
\sum\limits_{ 1 \leq j_{1} < j_{2} \leq s} m_{j_{1}}m_{j_{2}}d(x_{j_{1}},x_{j_{2}})^{p} -
\sum\limits_{ 1 \leq i_{1} < i_{2} \leq t} n_{i_{1}}n_{i_{2}}d(y_{i_{1}},y_{i_{2}})^{p}.
\end{eqnarray*}
We call $\gamma_{p}(D)$ the \textit{$p$-simplex gap of $D$ in $(X,d)$}.
\end{definition}

In the formulation of Definition \ref{S1} the points $x_{1}, \ldots, x_{s}, y_{1}, \ldots, y_{t} \in X$ are not
required to be distinct. As we will see, this allows a high degree of flexibility but it also comes at some technical
cost, the most important of which is the necessity of keeping track of repetitions of points using repeating numbers.

\begin{definition}\label{S3}
Given a signed $(s,t)$-simplex $D = [x_{j}(m_{j});y_{i}(n_{i})]_{s,t}$ in a set $X$ we
denote by $S(D)$ the set of distinct points in $X$ that appear in $D$. In other words,
$$S(D) = \{ z \in X: z = x_{j} \text{ for some } j \text{ or } z = y_{i} \text{ for some } i \}.$$
For each $z \in S(D)$ we define the \textit{repeating numbers} ${\mathbf{m}}(z)$ and ${\mathbf{n}}(z)$ as follows:
\begin{eqnarray*}
{\mathbf{m}}(z) & = & \sum\limits_{j : z = x_{j}} m_{j}, \mbox{ and} \\
{\mathbf{n}}(z) & = & \sum\limits_{i : z = y_{i}} n_{i}.
\end{eqnarray*}
We say that the simplex $D$ is \textit{degenerate} if ${\mathbf{m}}(z) = {\mathbf{n}}(z)$ for all $z \in S(D)$.
\end{definition}

In relation to Definition \ref{S3} it is important to note that some of the sums defining ${\mathbf{m}}(z)$
or ${\mathbf{n}}(z)$, $z \in S(D)$, may be indexed over the empty set $\varnothing$. The convention in this paper is
that all such sums are equal to $0$. By Definition \ref{S1}, $\sum {\mathbf{m}}(z) = \sum {\mathbf{n}}(z)$.

\begin{remark}\label{snifter}
It is worth noting that if $D = [x_{j}(m_{j});y_{i}(n_{i})]_{s,t}$ is a degenerate signed $(s,t)$-simplex in a
vector space $X$, then
\begin{eqnarray*}
\sum\limits_{j = 1}^{s} m_{j}x_{j} & = & \sum\limits_{z \in S(D)} {\mathbf{m}}(z) z \\
                                   & = & \sum\limits_{z \in S(D)} {\mathbf{n}}(z) z \\
                                   & = & \sum\limits_{i = 1}^{t} n_{i}y_{i}.
\end{eqnarray*}
The full significance of this remark will become apparent as we proceed.
\end{remark}

There are various ways that we may refine a signed $(s,t)$-simplex $D = [x_{j}(m_{j});y_{i}(n_{i})]_{s,t}$
in a metric space $(X,d)$ without altering any of the values of the $p$-simplex gaps $\gamma_{p}(D)$, $p > 0$.
The following lemmas describe three such scenarios. The first lemma is particularly simple and is stated
without proof.

\begin{lemma}[Procedure 1]\label{Proc1} Suppose $D = [x_{j}(m_{j});y_{i}(n_{i})]_{s,t}$
is a signed $(s,t)$-simplex in a metric space $(X,d)$ such that $x_{1} = x_{2}$. Let
$D^{\prime} = [x_{2}(m_{1}+m_{2}), x_{3}(m_{3}), \ldots, x_{s}(m_{s}); y_{i}(n_{i})]_{s-1,t}$.
Then $S(D) = S(D^{\prime})$ and $\gamma_{p}(D) = \gamma_{p}(D^{\prime})$ for all $p \geq 0$. Moreover,
all repeating numbers are invariant under this refinement.
\end{lemma}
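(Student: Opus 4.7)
The plan is a direct verification of the three claims by expanding the definitions of $\gamma_p$ in Definition \ref{S4} and of the repeating numbers in Definition \ref{S3}, then comparing $D$ and $D'$ term-by-term. The lemma is flagged as elementary precisely because the passage from $D$ to $D'$ only merges two identical vertices, and $\gamma_p$ depends on the $x_j$ solely through the distances $d(x_j, y_i)$ and $d(x_{j_1}, x_{j_2})$, both of which are either unchanged or reduce in the way the weights bookkeep.

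First I would dispatch $S(D) = S(D')$: since $x_1 = x_2$, removing the label $x_1$ and consolidating its weight into the vertex $x_2$ neither introduces a new point of $X$ into the configuration nor eliminates any existing one, so the set of distinct points appearing is identical. The invariance of the repeating numbers then follows similarly. The values $\mathbf{n}(z)$ are automatic because no $y_i$ or $n_i$ is altered. For $\mathbf{m}(z)$, if $z \neq x_1 = x_2$ the indexing sets $\{j : z = x_j\}$ in $D$ and in $D'$ contribute identical terms (only indices $j \geq 3$), while if $z = x_1 = x_2$, the contribution $m_1 + m_2$ coming from $D$'s first two positions is matched exactly by the single weight $m_1 + m_2$ attached to $x_2$ in $D'$, plus whatever common contribution arises from indices $j \geq 3$ with $x_j = z$.

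The main step is the equality $\gamma_p(D) = \gamma_p(D')$, and it rests on three observations. The self-term $m_1 m_2 d(x_1, x_2)^p$ in $\gamma_p(D)$ vanishes since $d(x_1, x_2) = 0$. For each $j \geq 3$, the two terms $m_1 m_j d(x_1, x_j)^p + m_2 m_j d(x_2, x_j)^p$ collapse to $(m_1 + m_2) m_j d(x_2, x_j)^p$, which is precisely the corresponding $(x_2, x_j)$-term in $\gamma_p(D')$. Analogously, for each $i$ the mixed contributions $m_1 n_i d(x_1, y_i)^p + m_2 n_i d(x_2, y_i)^p$ gather as $(m_1 + m_2) n_i d(x_2, y_i)^p$, matching the corresponding term in $\gamma_p(D')$. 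All remaining summands of $\gamma_p$ involve only data ($x_3, \ldots, x_s, y_1, \ldots, y_t$ and their weights) that is identical between $D$ and $D'$.

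There is no genuine obstacle here; the only item requiring minor bookkeeping is simply confirming that no term in $\gamma_p(D')$ is missed in the match-up, which is why it helps to group the sum defining $\gamma_p(D)$ by ``contributions involving index $1$ or $2$'' versus ``everything else.'' The argument is purely combinatorial and uses no property of $(X,d)$ beyond $d(x_1, x_1) = 0$, so it goes through uniformly for every $p \geq 0$ with the usual convention that $0^p = 0$ for $p > 0$.
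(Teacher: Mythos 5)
Your proof is correct, and it is exactly the argument the paper intends: Lemma \ref{Proc1} is stated there without proof precisely because the verification reduces to the vanishing self-term $m_{1}m_{2}\,d(x_{1},x_{2})^{p}$, the collapse of the paired terms $m_{1}m_{j}d(x_{1},x_{j})^{p}+m_{2}m_{j}d(x_{2},x_{j})^{p}=(m_{1}+m_{2})m_{j}d(x_{2},x_{j})^{p}$ and $m_{1}n_{i}d(x_{1},y_{i})^{p}+m_{2}n_{i}d(x_{2},y_{i})^{p}=(m_{1}+m_{2})n_{i}d(x_{2},y_{i})^{p}$, and the observation that all other summands, $S(D)$, and the repeating numbers are untouched, which is what you do. The only cosmetic point is at $p=0$: for the self-term to vanish there you need the convention $d(x,x)^{0}=0$ (your stated convention $0^{p}=0$ only covers $p>0$), which is the convention implicit in the paper's use of $\gamma_{0}$.
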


\begin{lemma}[Procedure 2]\label{Proc2} Suppose $D = [x_{j}(m_{j});y_{i}(n_{i})]_{s,t}$
is a signed $(s,t)$-simplex in a metric space $(X,d)$ such that $x_{1} = y_{1}$.
Let $D^{\prime\prime} = [x_{1}(0), x_{2}(m_{2}), \ldots, x_{s}(m_{s}) ;
y_{1}(n_{1}-m_{1}), y_{2}(n_{2}), \ldots, y_{t}(n_{t})]_{s,t}$.
Then $S(D) = S(D^{\prime\prime})$ and $\gamma_{p}(D) = \gamma_{p}(D^{\prime\prime})$ for all $p \geq 0$.
Moreover, as $x_{1} = y_{1} = z$ for some $z \in S(D)$, it is also the case that
$\mathbf{m}_{D^{\prime\prime}}(z) = \mathbf{m}_{D}(z) - m_{1}$ and $\mathbf{n}_{D^{\prime\prime}}(z)
= \mathbf{n}_{D}(z) - m_{1}$.
\end{lemma}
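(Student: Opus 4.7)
The plan is a direct verification of the four assertions, all of which reduce to bookkeeping once one invokes $x_{1} = y_{1}$. First, I would confirm that $D''$ is still a well-formed signed simplex in the sense of Definition \ref{S1}: its left weights sum to $0 + m_{2} + \cdots + m_{s} = (\sum_{j} m_{j}) - m_{1}$ while its right weights sum to $(n_{1} - m_{1}) + n_{2} + \cdots + n_{t} = (\sum_{i} n_{i}) - m_{1}$, and these totals are equal because $D$ is a signed simplex. The equality $S(D) = S(D'')$ is immediate from Definition \ref{S3}, since $D$ and $D''$ involve the same underlying list of points; only the real coefficients have been adjusted.

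Second, the repeating number identities at $z = x_{1} = y_{1}$ follow at once from Definition \ref{S3}. In passing from $D$ to $D''$, the $x_{1}$-weight drops from $m_{1}$ to $0$ and the $y_{1}$-weight drops from $n_{1}$ to $n_{1} - m_{1}$, while all other weights are preserved; so the defining sums for $\mathbf{m}(z)$ and $\mathbf{n}(z)$ each decrease by exactly $m_{1}$, giving $\mathbf{m}_{D''}(z) = \mathbf{m}_{D}(z) - m_{1}$ and $\mathbf{n}_{D''}(z) = \mathbf{n}_{D}(z) - m_{1}$. Repeating numbers at every other point of $S(D)$ are visibly unchanged.

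Third, and requiring the bulk of the work, is the identity $\gamma_{p}(D) = \gamma_{p}(D'')$. Returning to Definition \ref{S4} and subtracting, every term whose pair of indices avoids both the $1$-slot on the $x$-side and the $1$-slot on the $y$-side cancels. Collecting what remains according to the common factor of $m_{1}$ yields
\begin{eqnarray*}
\gamma_{p}(D) - \gamma_{p}(D'') & = & m_{1} \sum_{i=1}^{t} n_{i}\, d(x_{1},y_{i})^{p} + m_{1} \sum_{j=2}^{s} m_{j}\, d(x_{j},y_{1})^{p} \\
& & {} - m_{1} \sum_{j=2}^{s} m_{j}\, d(x_{1},x_{j})^{p} - m_{1} \sum_{i=2}^{t} n_{i}\, d(y_{1},y_{i})^{p}.
\end{eqnarray*}
I then invoke the hypothesis $x_{1} = y_{1}$: this gives $d(x_{1},y_{1}) = 0$ (killing the $i=1$ summand in the first sum), together with $d(x_{1},y_{i}) = d(y_{1},y_{i})$ for $i \geq 2$ and $d(x_{j},y_{1}) = d(x_{1},x_{j})$ for $j \geq 2$. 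Substituting these, the four sums pair off and cancel, yielding $\gamma_{p}(D) - \gamma_{p}(D'') = 0$ as desired.

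The argument is elementary and there is no genuine obstacle; the only place where a slip is at all likely is in the third step, where the signs in Definition \ref{S4} and the asymmetric replacement $n_{1} \mapsto n_{1} - m_{1}$ must be tracked with care so that, after $x_{1} = y_{1}$ is applied, the surviving cross contributions on the $+$-side match their counterparts on the $-$-diagonals.
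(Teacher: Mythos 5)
Your proof is correct and takes essentially the same route as the paper: both arguments reduce to the observation that the only terms of $\gamma_{p}$ affected by the reweighting are those involving the vertices $x_{1}$ and $y_{1}$, and these cancel once $x_{1}=y_{1}$ is used (the paper organizes this as matching the vertex contributions $\gamma_{p}(D)(x_{1})+\gamma_{p}(D)(y_{1})=\gamma_{p}(D^{\prime\prime})(y_{1})$ via the split $n_{1}=(n_{1}-m_{1})+m_{1}$, while you compute the difference $\gamma_{p}(D)-\gamma_{p}(D^{\prime\prime})$ directly; the resulting identity is the same). The remaining claims about $S(D)$ and the repeating numbers are the same immediate bookkeeping in both treatments.
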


\begin{proof}
Let $p \geq 0$ be given.
The assumption is that $x_{1} = y_{1}$. Let $\gamma_{p}(D)(x_{1})$ denote the contribution of the $x_{1}$-terms
to the $p$-simplex gap $\gamma_{p}(D)$. Similarly, let $\gamma_{p}(D)(y_{1})$ and $\gamma_{p}(D^{\prime\prime})(y_{1})$
denote the contribution of the $y_{1}$-terms to $\gamma_{p}(D)$ and $\gamma_{p}(D^{\prime\prime})$, respectively.
It suffices to show that $\gamma_{p}(D)(x_{1}) + \gamma_{p}(D)(y_{1}) = \gamma_{p}(D^{\prime\prime})(y_{1})$.
(This just says that ``$x_{1}(m_{1})$'' cancels ``$y_{1}(m_{1})$'' in $\gamma_{p}(D)$, leaving $\gamma_{p}(D^{\prime\prime})$.)
First of all, note that
\begin{eqnarray*}
\gamma_{p}(D)(x_{1})
& = & m_{1} \biggl( \sum\limits_{i=2}^{t} n_{i}d(x_{1}, y_{i})^{p} \biggl) -
      m_{1} \biggl( \sum\limits_{j=2}^{s} m_{j}d(x_{1}, x_{j})^{p} \biggl) \\
& = & m_{1} \biggl( \sum\limits_{i=2}^{t} n_{i}d(y_{1}, y_{i})^{p} \biggl) -
      m_{1} \biggl( \sum\limits_{j=2}^{s} m_{j}d(x_{1}, x_{j})^{p} \biggl).
\end{eqnarray*}
On the other hand,
\begin{eqnarray*}
\gamma_{p}(D)(y_{1})
& = & (n_{1} - m_{1} + m_{1}) \biggl( \sum\limits_{j=2}^{s} m_{j}d(x_{j}, y_{1})^{p}\biggl) -
      (n_{1} - m_{1} + m_{1}) \biggl( \sum\limits_{i=2}^{t} n_{i}d(y_{1}, y_{i})^{p}\biggl) \\
& = & (n_{1} - m_{1}) \biggl( \sum\limits_{j=2}^{s} m_{j}d(x_{j}, y_{1})^{p}\biggl) -
      (n_{1} - m_{1}) \biggl( \sum\limits_{i=2}^{t} n_{i}d(y_{1}, y_{i})^{p}\biggl) \\
&   & + m_{1} \biggl( \sum\limits_{j=2}^{s} m_{j}d(x_{j}, y_{1})^{p}\biggl) -
      m_{1} \biggl( \sum\limits_{i=2}^{t} n_{i}d(y_{1}, y_{i})^{p}\biggl) \\
& = & \gamma_{p}(D^{\prime\prime})(y_{1}) - \gamma_{p}(D)(x_{1}).
\end{eqnarray*}
The second assertion of the lemma is true by construction.
\end{proof}

\textbf{Notation.} The refinement procedure used to form the simplex $D^{\prime\prime}$ in the statement of Lemma \ref{Proc2}
will be denoted: $x_{1}(m_{1}) \rightarrow x_{1}(0), y_{1}(n_{1}) \rightarrow y_{1}(n_{1} - m_{1})$.

There is a useful variant of the second refinement procedure.

\begin{lemma}[Procedure 3]\label{Proc3} Suppose that $D = [x_{j}(m_{j});y_{i}(n_{i})]_{s,t}$
is a signed $(s,t)$-simplex in a metric space $(X,d)$. Let $y_{t+1} = x_{1}$, $n_{t+1} = - m_{1}$
and $D^{\prime\prime\prime} = [x_{1}(0), x_{2}(m_{2}), \ldots, x_{s}(m_{s}) ; y_{i}(n_{i})]_{s,t+1}$.
Then $S(D) = S(D^{\prime\prime\prime})$ and $\gamma_{p}(D) = \gamma_{p}(D^{\prime\prime\prime})$ for all $p \geq 0$.
Moreover, as $x_{1} = y_{t+1} = z$ for some $z \in S(D)$, it is also the case that
$\mathbf{m}_{D^{\prime\prime\prime}}(z) = \mathbf{m}_{D}(z) - m_{1}$ and
$\mathbf{n}_{D^{\prime\prime\prime}}(z) = \mathbf{n}_{D}(z) - m_{1}$.
\end{lemma}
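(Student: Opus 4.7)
The plan is to imitate the proof of Lemma \ref{Proc2}, exploiting the fact that a positively-weighted vertex $x_1(m_1)$ on the $x$-side contributes to $\gamma_p$ in exactly the same way as a duplicate $y_{t+1}(-m_1)$ with $y_{t+1}=x_1$ placed on the $y$-side. The refinement simply relocates the mass $m_1$ from the $x$-side to the $y$-side with a sign change, while leaving every other weight intact. Note that Definition \ref{S1} permits the weight $n_{t+1} = -m_1$ to be negative, so the resulting configuration is still a legitimate signed simplex; moreover the balance condition $\sum m_j^{\text{new}} = \sum n_i^{\text{new}}$ is preserved since both sides decrease by $m_1$.

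The set equality $S(D) = S(D''')$ is immediate: $y_{t+1} = x_1$ introduces no new vertex, and zeroing the weight on $x_1$ does not delete $x_1$ from the underlying vertex list. The repeating-number claims at $z = x_1$ are then true by construction, since $\mathbf{m}(z)$ loses the summand $m_1$ (replaced by $0$) while $\mathbf{n}(z)$ gains the new summand $n_{t+1} = -m_1$; all other contributions to $\mathbf{m}$ and $\mathbf{n}$ are untouched.

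The substance of the lemma is the equality $\gamma_p(D) = \gamma_p(D''')$. All pairs in the defining sum that involve neither $x_1$ nor $y_{t+1}$ contribute identically on both sides, so I only need to compare the $x_1$-terms of $\gamma_p(D)$ with the $y_{t+1}$-terms of $\gamma_p(D''')$; the $x_1$-terms of $\gamma_p(D''')$ itself all vanish since $x_1$ now carries weight $0$. A direct expansion yields
\[
\gamma_p(D)(x_1) \;=\; m_1\sum_{i=1}^{t} n_i\, d(x_1,y_i)^p \;-\; m_1 \sum_{j=2}^{s} m_j\, d(x_1,x_j)^p,
\]
while, using $y_{t+1} = x_1$ and $n_{t+1} = -m_1$,
\[
\gamma_p(D''')(y_{t+1}) \;=\; -m_1 \sum_{j=2}^{s} m_j\, d(x_j,x_1)^p \;+\; m_1 \sum_{i=1}^{t} n_i\, d(y_i,x_1)^p,
\]
which coincides with $\gamma_p(D)(x_1)$.

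The only minor subtlety is the self-pair $(y_{t+1},y_{t+1})$ that formally appears in the same-side $y$-sum of $D'''$; it contributes $n_{t+1}^2\, d(x_1,x_1)^p = 0$ and is therefore harmless. There is no substantive obstacle beyond accurate bookkeeping of which pairs change under the refinement. Essentially, this lemma is the sign-tracking variant of Lemma \ref{Proc2} in which, rather than canceling an $x$-$y$ coincidence by subtracting from $n_1$, one introduces a fresh negatively-weighted $y$-vertex to carry the cancellation.
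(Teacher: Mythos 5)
Your proof is correct, and the bookkeeping is accurate: the $x_1$-terms of $\gamma_p(D)$ and the $y_{t+1}$-terms of $\gamma_p(D''')$ match exactly as you compute, the remaining pairs are untouched, the new $x_1(0)$ vertex contributes nothing, and the statements about $S(D)$, the balance condition, and the repeating numbers at $z=x_1$ all follow by construction. The route differs slightly from the paper's: rather than re-expanding the simplex gap, the paper deduces Procedure 3 from Procedure 2 in two lines, by first inserting the pair $y_{t+1}(0)$ with $y_{t+1}=x_1$ (which changes no gaps, weights sums, or repeating numbers) and then applying Lemma \ref{Proc2} via the refinement $x_{1}(m_{1}) \rightarrow x_{1}(0)$, $y_{t+1}(0) \rightarrow y_{t+1}(-m_{1})$. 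Your direct verification is self-contained and makes the cancellation explicit, at the cost of repeating essentially the computation already done in the proof of Lemma \ref{Proc2}; the paper's reduction buys brevity and makes clear that Procedure 3 is literally a special case of Procedure 2 applied to an augmented simplex. One trivial quibble: there is no ``self-pair'' $(y_{t+1},y_{t+1})$ in the same-side $y$-sum of $D'''$, since that sum runs over $i_{1}<i_{2}$; the only degenerate-distance term that actually arises is the cross pair $(x_1,y_{t+1})$, and, as you note in effect, it vanishes anyway (here because $x_1$ carries weight $0$), so the remark is harmless.
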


\begin{proof}
If we set $y_{t+1} = x_{1}$, then we may insert the pair $y_{t+1}(0)$ into the simplex $D$ without altering
any of the simplex gaps or repeating numbers. Now apply Procedure 2 to refine the simplex $D$ in the following way:
$x_{1}(m_{1}) \rightarrow x_{1}(0), y_{t+1}(0) \rightarrow y_{t+1}(-m_{1})$.
\end{proof}

\textbf{Notation.} The refinement procedure used to form the simplex $D^{\prime\prime\prime}$ in the statement of Lemma \ref{Proc3}
will be denoted: $x_{1}(m_{1}) \rightarrow x_{1}(0), y_{t+1}(n_{t+1}) = x_{1}(-m_{1})$.

It is worth noting that the refinement procedures described in Lemmas \ref{Proc1} -- \ref{Proc3} preserve
degeneracy: Any refinement of a degenerate simplex will also be degenerate.
More generally, we may use the refinement procedures to define a preorder on the
collection of all signed simplices in a metric space $(X,d)$.

\begin{definition}\label{S5}
Let $D_{1}, D_{2}$ be signed simplices in a metric space $(X,d)$. We say that $D_{1}$ \textit{refines to}
$D_{2}$, denoted $D_{1} \succ D_{2}$,  if $D_{2}$ can be obtained from $D_{1}$ by finitely many applications
of the three refinement procedures described in Lemmas \ref{Proc1} -- \ref{Proc3}.
\end{definition}

\begin{remark}\label{SR5} Notice that if $D_{1} \succ D_{2}$ in a metric space $(X,d)$, then:
\begin{enumerate}
\item[(1)] $S(D_{1}) = S(D_{2})$,

\item[(2)] for each $z \in S(D_{1}) = S(D_{2})$, ${\mathbf{m}}_{D_{1}}(z) = {\mathbf{n}}_{D_{1}}(z)$ if and only if
${\mathbf{m}}_{D_{2}}(z)= {\mathbf{n}}_{D_{2}}(z)$, and 

\item[(3)] $\gamma_{p}(D_{1}) = \gamma_{p}(D_{2})$ for all $p \geq 0$.
\end{enumerate}
Moreover, it follows from (2), that $D_{1}$ is degenerate if and only if $D_{2}$ is degenerate.
\end{remark}

The notion of refining to another simplex affords an important characterization of degeneracy.

\begin{lemma}\label{deg:s}
A signed $(s,t)$-simplex $D = [x_{j}(m_{j});y_{i}(n_{i})]_{s,t}$ in a metric space $(X,d)$
is degenerate if and only if it refines to a signed simplex $D_{\varnothing}$ that has
no non-zero weights.
\end{lemma}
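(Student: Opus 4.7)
The plan is to handle the two directions separately. For the easy direction, if $D \succ D_{\varnothing}$ with every weight in $D_{\varnothing}$ equal to zero, then $\mathbf{m}_{D_{\varnothing}}(z) = 0 = \mathbf{n}_{D_{\varnothing}}(z)$ for every $z \in S(D_{\varnothing})$, so $D_{\varnothing}$ is degenerate, and Remark \ref{SR5} (2) transfers degeneracy back to $D$.

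For the forward direction I would proceed constructively: first consolidate all repetitions, then zero out weights one point at a time. Repeated applications of Procedure 1 will collapse all coincidences among the $x_{j}$; by the symmetric $y$-side version (legitimate because Definitions \ref{S1}, \ref{S4}, and \ref{S3} treat the two families symmetrically modulo an interchange of $\mathbf{m}$ and $\mathbf{n}$), I will likewise collapse coincidences among the $y_{i}$. The resulting equivalent simplex $D_{1}$ then lists each $z \in S(D)$ at most once on each side, with weights precisely $\mathbf{m}(z)$ on the $x$-side (or $0$ if $z$ is absent there) and $\mathbf{n}(z)$ on the $y$-side (or $0$ if $z$ is absent there).

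I would then enumerate $S(D)$ and handle each point in turn. For every $z$ appearing on both sides, degeneracy gives $\mathbf{m}(z) = \mathbf{n}(z)$, and a single application of Procedure 2 (after a trivial relabeling that places the coincident pair first on each side) sends the weight pair to $(0,0)$. For every $z$ appearing on only one side, degeneracy forces the lone weight to be zero already. Because Procedure 2 alters only the two weights at the targeted coincident pair while leaving the other weights and all vertex labels intact, successive applications across distinct $z$ do not interfere, and finitely many such steps yield the desired $D_{\varnothing} \succ D_{1} \succ D$ with every weight zero.

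The only mildly subtle point is the appeal to the $y$-side analog of Procedure 1 (the stated version being one-sided); an alternative route that avoids it would be to apply Procedure 3 to migrate every $x$-entry onto the $y$-side first and then consolidate, but this seems no cleaner. Everything else in the argument is pure bookkeeping.
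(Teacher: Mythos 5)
Your proof is correct and follows essentially the same route as the paper: Procedure 1 (applied symmetrically to both halves, exactly as the paper does to reach a full simplex) followed by Procedure 2 on each coincident pair with matched weights $\mathbf{m}(z)=\mathbf{n}(z)$, with the converse read off from Remark \ref{SR5}; your explicit handling of points occurring on only one side is a small extra care the paper glosses over. The only blemish is notational: by Definition \ref{S5} the chain should read $D \succ D_{1} \succ D_{\varnothing}$, not the reverse.
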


\begin{proof}
$(\Rightarrow)$ Suppose that $D = [x_{j}(m_{j});y_{i}(n_{i})]_{s,t}$ is a given degenerate simplex.
We may assume, by applying Lemma \ref{Proc1} finitely often if necessary, that the degenerate simplex $D$
has the following property: the points $x_{1}, \ldots, x_{s}$ are pairwise distinct and the points
$y_{1}, \ldots, y_{t}$ are pairwise distinct. (This forces $s = t$ because $D$ is degenerate.)
Let $z \in S(D)$ be given. By assumption, $\mathbf{m}_{D}(z) = \mathbf{n}_{D}(z)$. By relabeling the
simplex, if necessary, we may assume that $z = x_{1} = y_{1}$.
The property placed on $D$ then ensures that $m_{1} = \mathbf{m}_{D}(z) =
\mathbf{n}_{D}(z) = n_{1}$. By Lemma \ref{Proc2}, it follows that $D$ refines to the simplex
$D^{\prime\prime} = [x_{1}(0), x_{2}(m_{2}), \ldots, x_{s}(m_{s}); y_{1}(0), y_{2}(n_{2}), \ldots, y_{t}(n_{t})]_{s,t}$.
The forward implication of the lemma follows by applying this process a finite number of times.

$(\Leftarrow)$ Immediate from Remark \ref{SR5}.
\end{proof}

As an immediate application of Lemma \ref{deg:s} we obtain the following corollary.

\begin{corollary}\label{0}
If $D$ is a degenerate simplex in a metric space $(X,d)$, then $\gamma_{p}(D) = 0$ for all $p \geq 0$.
\end{corollary}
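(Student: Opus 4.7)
The plan is to piece together Lemma \ref{deg:s} and Remark \ref{SR5}(3). Given a degenerate simplex $D$ in $(X,d)$, Lemma \ref{deg:s} supplies a simplex $D_{\varnothing}$ all of whose weights vanish, such that $D \succ D_{\varnothing}$. The first step is therefore simply to invoke Lemma \ref{deg:s} to replace $D$ by $D_{\varnothing}$.

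Next, I would compute $\gamma_{p}(D_{\varnothing})$ directly from Definition \ref{S4}. Since every weight $m_{j}$ and $n_{i}$ in $D_{\varnothing}$ equals $0$, each of the three sums defining $\gamma_{p}(D_{\varnothing})$ is a sum of terms in which at least one weight factor is zero. Hence each summand vanishes and $\gamma_{p}(D_{\varnothing}) = 0$ for every $p \geq 0$.

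The third step is to transport this equality back to $D$. By Remark \ref{SR5}(3), $D \succ D_{\varnothing}$ implies $\gamma_{p}(D) = \gamma_{p}(D_{\varnothing})$ for all $p \geq 0$. Combining this with the previous step yields $\gamma_{p}(D) = 0$, completing the argument.

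There is essentially no obstacle here, since the hard work has already been done in Lemma \ref{deg:s} (which establishes that degeneracy is detected by reduction to zero-weight form) and Remark \ref{SR5}(3) (which guarantees invariance of the $p$-simplex gap under refinement). The only thing to verify is the trivial computation that a simplex with all zero weights has vanishing $p$-simplex gap, which is immediate from the bilinear structure of $\gamma_{p}$ in the weights.
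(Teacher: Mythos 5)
Your argument is correct and is exactly the one the paper intends: the corollary is stated as an "immediate application" of Lemma \ref{deg:s}, with the gap vanishing on the zero-weight simplex and Remark \ref{SR5}(3) transporting the equality back to $D$. Nothing further is needed.
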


In fact, the converse of Corollary \ref{0} also holds, but this will require an appeal to Theorem \ref{2.3}.

At this point it is helpful to introduce some additional descriptive terminology for simplices.

\begin{definition}\label{S2}
Let $D = [x_{j}(m_{j});y_{i}(n_{i})]_{s,t}$ be a signed $(s,t)$-simplex in a set $X$.
\begin{enumerate}
\item[(1)] $D$ is said to be \textit{pure} if $x_{j} \not= y_{i}$ for all $j,i$.

\item[(2)] $D$ is said to be \textit{full} if
the points $x_{1}, \ldots, x_{s} \in X$ are pairwise distinct and the points
$y_{1}, \ldots, y_{t} \in X$ are pairwise distinct.

\item[(3)] $D$ is \textit{completely refined} if it is full, pure and every weight is positive ($> 0$).
\end{enumerate}
\end{definition}

The following lemma presents a fundamental dichotomy for a simplex in a metric space.

\begin{lemma}\label{SL1}
If $D = [x_{j}(m_{j});y_{i}(n_{i})]_{s,t}$ is a signed $(s,t)$-simplex in a metric space
$(X,d)$, then exactly one of the following two statements must hold:
\begin{enumerate}
\item[(1)] $D$ is degenerate.

\item[(2)] $D$ refines to a completely refined simplex $D^{\ast\ast\ast}$.
\end{enumerate}
\end{lemma}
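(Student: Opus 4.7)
The plan is to establish mutual exclusivity first and then produce $D^{\ast\ast\ast}$ whenever $D$ is non-degenerate.

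For exclusivity, it suffices to observe that a completely refined simplex cannot itself be degenerate, since degeneracy is an equivalence invariant by Remark~\ref{SR5}. Indeed, if $D^{\ast\ast\ast}$ is completely refined, pick any $z = x_{j} \in S(D^{\ast\ast\ast})$ (such $z$ exists since $s \geq 1$); purity forces $\mathbf{n}(z) = 0$ while positivity of weights forces $\mathbf{m}(z) = m_{j} > 0$, so $\mathbf{m}(z) \neq \mathbf{n}(z)$.

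For existence, assume $D$ is non-degenerate and introduce the guiding invariant $\alpha(z) := \mathbf{m}(z) - \mathbf{n}(z)$ for $z \in S(D)$. A direct inspection of Lemmas~\ref{Proc1}--\ref{Proc3} shows that each of Procedures~1,~2,~3 preserves $\alpha$ pointwise on $S(D)$; moreover $\sum_{z \in S(D)} \alpha(z) = 0$ by the simplex constraint $\sum m_{j} = \sum n_{i}$, and non-degeneracy says $\alpha$ is not identically zero. Consequently both $S_{+} := \{z \in S(D) : \alpha(z) > 0 \}$ and $S_{-} := \{z \in S(D) : \alpha(z) < 0 \}$ are non-empty, and the simplex $D^{\ast\ast\ast}$ whose $x$-side lists the points of $S_{+}$ with weights $\alpha(z)$ and whose $y$-side lists the points of $S_{-}$ with weights $-\alpha(z)$ is full, pure, and carries only positive weights, hence is completely refined.

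The remaining task is to reach $D^{\ast\ast\ast}$ from $D$ by a finite sequence of refinements. I would proceed in three phases: Phase~A applies Procedure~1 (and its $y$-side analogue, justified by relabeling) to merge all repeated vertices, producing a full simplex; Phase~B eliminates each shared point $z = x_{j} = y_{i}$ by Procedure~2, which transfers the $x_{j}$-weight into $y_{i}$ and leaves a lone zero-weight $x_{j}$ entry that is then excised by a short combination of Procedures~3 and~1; Phase~C uses Procedure~3 to flip any vertex of negative weight to the opposite side, where its weight becomes strictly positive, then uses Procedure~1 to merge any duplicate so created. Each phase strictly decreases a non-negative integer quantity (number of repeated vertices, impure points, or non-positive weights, respectively), so each terminates after finitely many moves, and the invariance of $\alpha$ plus $S(D)$ forces the outcome to coincide with $D^{\ast\ast\ast}$. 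The main obstacle is the zero-weight excision step in Phase~B: one must manufacture a matching entry on the opposite side via Procedure~3 and then collapse the pair via Procedure~1 in precisely the right order, checking that $\alpha$ and the completed-refined target are not disturbed in transit; this bookkeeping, rather than any deep conceptual point, is where the technical care goes.
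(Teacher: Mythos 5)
Your overall route is the same as the paper's: pass to a full simplex with Lemma~\ref{Proc1}, cancel points shared by the two sides with Lemma~\ref{Proc2}, flip negative weights with Lemma~\ref{Proc3}, and read off the target completely refined simplex from the differences $\alpha(z)=\mathbf{m}(z)-\mathbf{n}(z)$ --- your $D^{\ast\ast\ast}$ is exactly the one described in the remark following Lemma~\ref{SL1}, and your explicit exclusivity argument via Remark~\ref{SR5} is the intended (if unstated) one. The one step that fails as described is the ``excision'' in Phases~B and~C: a zero-weight listed vertex cannot be removed ``by a short combination of Procedures~3 and~1''. None of the three procedures deletes a listed entry except Procedure~1, and it does so only by merging two equal entries on the \emph{same} side, whereas a Procedure-3 flip creates the needed duplicate on the \emph{opposite} side while leaving a fresh zero-weight entry behind; the attempt simply cycles. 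For instance, starting from $x_{j}(0),\,y_{i}(n)$ with $x_{j}=y_{i}$ and $n>0$, flipping $y_{i}$ and merging on the $x$-side yields $x_{j}(-n),\,y_{i}(0)$, and Procedure~2 then returns you to $x_{j}(0),\,y_{i}(n)$. More structurally, all three procedures preserve the set of listed points (Remark~\ref{SR5}(1)), so once a point sits with weight $0$ opposite a positively weighted copy of itself, no sequence of Procedures~1--3 can make the simplex pure.

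The paper does not attempt such an excision: having arranged all weights to be non-negative, it simply deletes every vertex of weight $0$, tacitly treating insertion and deletion of zero-weight vertices as harmless auxiliary moves (exactly as its own proof of Lemma~\ref{Proc3} inserts the pair $y_{t+1}(0)$ without comment). So the repair for your argument is not a cleverer combination of Procedures~3 and~1, but the explicit convention that zero-weight vertices may be discarded, since they affect neither any $\gamma_{p}$ nor any repeating number. With that convention in place, your three phases, the pointwise invariance of $\alpha$, and your termination counts all go through and reproduce the paper's proof.
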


\begin{proof}
Let $D = [x_{j}(m_{j});y_{i}(n_{i})]_{s,t}$ be a given signed $(s,t)$-simplex in $(X,d)$.
There is, by finitely many of applications of Lemma \ref{Proc1}, a full simplex $D^{\prime}$ such that
$D \succ D^{\prime}$, $S(D) = S(D^{\prime})$ and $D^{\prime}$ has identical repeating numbers to $D$.
So we may as well assume from the
outset that the given simplex $D$ is full. There are five ways that we may then choose to refine the full simplex $D$ to
form a simplex $D^{\ast}$ such that $D \succ D^{\ast}$:

If $x_{j} = y_{i}$ for some $j,i$, then Lemma \ref{Proc2} allows us to form a
simplex $D^{\ast}$ by implementing the appropriate refinement from the following list:

\begin{enumerate}
\item[(1)] If $m_{j} = n_{i}$: $x_{j}(m_{j}) \rightarrow x_{j}(0), y_{i}(n_{i}) \rightarrow y_{i}(0)$.

\item[(2)] If $m_{j} < n_{i}$: $x_{j}(m_{j}) \rightarrow x_{j}(0), y_{i}(n_{i}) \rightarrow y_{i}(n_{i} - m_{j})$.

\item[(3)] If $m_{j} > n_{i}$: $x_{j}(m_{j}) \rightarrow x_{j}(m_{j} - n_{i}), y_{i}(n_{i}) \rightarrow y_{i}(0)$.
\end{enumerate}

If there is a $j$ such that $x_{j} \not= y_{i}$ for all $i$ and $m_{j} < 0$, then
Lemma \ref{Proc3} allows us to form a simplex $D^{\ast}$
by implementing the following refinement:

\begin{enumerate}
\item[(4)] $x_{j}(m_{j}) \rightarrow x_{j}(0), y_{t+1}(n_{t+1}) = x_{j}(-m_{j})$.
\end{enumerate}

If there is an $i$ such that $y_{i} \not= x_{j}$ for all $j$ and $n_{i} < 0$, then
Lemma \ref{Proc3} allows us to form a simplex $D^{\ast}$
by implementing the following refinement:

\begin{enumerate}
\item[(5)] $x_{s+1}(m_{s+1}) =  y_{i}(-n_{i}), y_{i}(n_{i}) \rightarrow y_{i}(0)$.
\end{enumerate}

We now proceed algorithmically. Consider the least $j$ such that ($x_{j} = y_{i}$ for some
$i$) $\vee$ ($x_{j} \not= y_{i}$ for all $i$ and $m_{j} < 0$), and implement the appropriate refinement
from the list (1) -- (4). Reiterate this process a finite number of times until no such $j$ remain.
Then consider the least $i$ such that $y_{i} \not= x_{j}$ for all $j$ and $n_{i} < 0$, and implement
the refinement (5). Reiterate this process a finite number of times until no such $i$ remain. At
this point the algorithm terminates and outputs a simplex $D^{\ast\ast}$ such that $D \succ D^{\ast\ast}$.
By construction, each vertex in the simplex $D^{\ast\ast}$
has non-negative weight. Moreover, $D$ is degenerate if all vertices in $D^{\ast\ast}$ have weight $0$.
On the other hand, $D$ is non-degenerate if at least one vertex in $D^{\ast\ast}$ has positive weight.
In the latter case, by deleting all vertices from $D^{\ast\ast}$ that have weight $0$, it
follows that $D$ refines to a completely refined simplex $D^{\ast\ast\ast}$.
\end{proof}

\begin{remark}
If $D = [x_{j}(m_{j});y_{i}(n_{i})]_{s,t}$ is a non-degenerate signed $(s,t)$-simplex in a metric
space $(X,d)$, then the completely refined signed simplex that it reduces to is unique (modulo
relabeling), and we can give it explicitly. To do this, let $\{ \tilde{x}_{j} \}_{\tilde{s}} =
\{ z \in S(D) : \mathbf{m}(z) > \mathbf{n}(z) \}$, $\{ \tilde{y}_{i} \}_{\tilde{t}} =
\{ z \in S(D) : \mathbf{m}(z) < \mathbf{n}(z) \}$, $\tilde{m}_{j} = \mathbf{m}(\tilde{x}_{j}) - \mathbf{n}(\tilde{x}_{j})$
and $\tilde{n}_{i} = \mathbf{n}(\tilde{y}_{i}) - \mathbf{m}(\tilde{y}_{i})$.
Then $D^{\ast\ast\ast} = [\tilde{x}_{j}(\tilde{m}_{j}); \tilde{y}_{i}(\tilde{n}_{i})]_{\tilde{s}, \tilde{t}}$.
If, moreover, $\mathcal{N}_{(X,d)}$ denotes the set of all non-degenerate simplices that correspond to the metric
space $(X,d)$, then the operation $\succ$ induces an equivalence relation on $\mathcal{N}_{(X,d)}$:
$D_{1} \sim D_{2}$ if and only if $D_{1}$ and $D_{2}$ refine to a common completely refined
simplex $D^{\ast\ast\ast}$.
\end{remark}

In the case of vector spaces we will have reason to consider balanced simplices.

\begin{definition}\label{S2B}
Let $D = [x_{j}(m_{j});y_{i}(n_{i})]_{s,t}$ be a signed $(s,t)$-simplex in a vector space $X$.
We say that $D$ is \textit{balanced} if
\begin{eqnarray*}
\sum\limits_{j} m_{j}x_{j} & = & \sum\limits_{i} n_{i}y_{i}.
\end{eqnarray*}
\end{definition}

Notice that if $D = [x_{j}(m_{j});y_{i}(n_{i})]_{s,t}$ is a degenerate simplex in a vector space $X$, then
$D$ is balanced by Remark \ref{snifter}.

\begin{lemma}\label{B}
Let $D_{1}$ and $D_{2}$ be signed simplices in a vector space $X$.
If $D_{1}$ is balanced and if $D_{1} \succ D_{2}$, then $D_{2}$ is balanced.
(Any refinement of a balanced simplex is balanced.)
\end{lemma}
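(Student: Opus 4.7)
The plan is to reduce to a single-step verification by induction on the number of refinements. Since $D_1 \succ D_2$ means $D_2$ is obtained from $D_1$ by finitely many applications of Procedures 1, 2, 3 (Lemmas \ref{Proc1}, \ref{Proc2}, \ref{Proc3}), it suffices to check that each one of these three procedures individually preserves the balanced condition
\[
\sum_{j} m_{j} x_{j} \; = \; \sum_{i} n_{i} y_{i}.
\]

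First I would dispatch Procedure 1 trivially: when $x_{1} = x_{2}$, replacing the pair $x_{1}(m_{1}), x_{2}(m_{2})$ with the single vertex $x_{2}(m_{1}+m_{2})$ leaves the left-hand sum of $\sum m_{j} x_{j}$ literally unchanged (since $m_{1} x_{1} + m_{2} x_{2} = (m_{1}+m_{2}) x_{2}$), and does not touch the right-hand sum at all.

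Next, for Procedure 2, where $x_{1} = y_{1}$ and we pass to $x_{1}(0)$, $y_{1}(n_{1}-m_{1})$, the left-hand sum decreases by $m_{1} x_{1}$ and the right-hand sum decreases by $m_{1} y_{1}$; these two quantities are equal because $x_{1}=y_{1}$, so the balanced equation is preserved. For Procedure 3, setting $y_{t+1} = x_{1}$ with $n_{t+1} = -m_{1}$ and then sending $x_{1}(m_{1}) \to x_{1}(0)$, $y_{t+1}(0) \to y_{t+1}(-m_{1})$, the left-hand sum drops by $m_{1} x_{1}$ while the right-hand sum changes by $-m_{1} y_{t+1} = -m_{1} x_{1}$; again the two sides change by the same vector.

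Given these three one-step verifications, the lemma follows by a straightforward induction on the length of the chain of refinements realizing $D_{1} \succ D_{2}$. I do not anticipate any genuine obstacle here: the only care needed is the bookkeeping in Procedures 2 and 3 to confirm that the vector $m_{1} x_{1}$ subtracted from the left is precisely the vector subtracted from the right, which is immediate from the identification $x_{1} = y_{1}$ (respectively $x_{1} = y_{t+1}$).
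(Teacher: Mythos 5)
Your proof is correct and follows the same route as the paper, which simply declares the lemma ``immediate from the definitions''; you have just written out explicitly the one-step checks for Procedures 1--3 and the induction on the number of refinements, and those checks are all accurate.
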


\begin{proof}
Immediate from the definitions.
\end{proof}

Non-degenerate balanced simplices have important structural properties such as the following.

\begin{theorem}\label{thm:B}
Let $n \geq 1$ be an integer and let $X$ be a real or complex vector space. Then a subset
$Z = \{ z_{0}, z_{1}, \ldots z_{n} \}$ of $X$ admits a non-degenerate balanced simplex if and
only if the set $\{ z_{1} - z_{0}, z_{2} - z_{0}, \ldots, z_{n} - z_{0} \}$ is linearly
dependent (when $X$ is considered as a real vector space).
\end{theorem}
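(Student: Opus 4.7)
The plan is to convert back and forth between a non-degenerate balanced simplex on $Z$ and a non-trivial real linear relation of the form $\sum_{k=0}^{n}\beta_{k}z_{k}=0$ with $\sum_{k=0}^{n}\beta_{k}=0$, and then to observe that such a relation (with not all $\beta_{k}$ zero) is equivalent to the linear dependence of $\{z_{1}-z_{0},\ldots,z_{n}-z_{0}\}$ over $\mathbb{R}$.

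For the forward direction, I would begin with a non-degenerate balanced simplex $D$ whose vertices lie in $Z$. By Lemma \ref{SL1}, $D$ is equivalent to a completely refined simplex $D^{\ast\ast\ast}=[x_{j}(m_{j});y_{i}(n_{i})]_{s,t}$. By Lemma \ref{B} and Remark \ref{SR5}, $D^{\ast\ast\ast}$ is still balanced and non-degenerate, and it has distinct positive-weight vertices with no $x_{j}$ equal to any $y_{i}$. Set $\beta_{k}=m_{j}$ if $z_{k}=x_{j}$, $\beta_{k}=-n_{i}$ if $z_{k}=y_{i}$, and $\beta_{k}=0$ otherwise. Balance and the simplex condition give $\sum_{k=0}^{n}\beta_{k}z_{k}=0$ and $\sum_{k=0}^{n}\beta_{k}=0$, and not all $\beta_{k}$ vanish since $s,t\geq 1$ with positive weights. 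Then
\begin{eqnarray*}
\sum_{k=1}^{n}\beta_{k}(z_{k}-z_{0}) & = & \sum_{k=1}^{n}\beta_{k}z_{k}-\Bigl(\sum_{k=1}^{n}\beta_{k}\Bigr)z_{0} \;=\; -\beta_{0}z_{0}-(-\beta_{0})z_{0}\;=\;0,
\end{eqnarray*}
and some $\beta_{k}$ with $k\geq 1$ is nonzero (otherwise $\beta_{0}$ would be the only nonzero coefficient, contradicting $\sum\beta_{k}=0$), which gives the required linear dependence.

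For the reverse direction, I would start with real scalars $\beta_{1},\ldots,\beta_{n}$, not all zero, satisfying $\sum_{k=1}^{n}\beta_{k}(z_{k}-z_{0})=0$. Define $\beta_{0}=-\sum_{k=1}^{n}\beta_{k}$ so that $\sum_{k=0}^{n}\beta_{k}=0$ and $\sum_{k=0}^{n}\beta_{k}z_{k}=0$, with not all coefficients zero. Let $P=\{k:\beta_{k}>0\}$ and $N=\{k:\beta_{k}<0\}$. Both $P$ and $N$ are non-empty, since the nonzero $\beta_{k}$'s sum to zero. Enumerate $P=\{j_{1},\ldots,j_{s}\}$ and $N=\{i_{1},\ldots,i_{t}\}$, and set $x_{r}=z_{j_{r}}$, $m_{r}=\beta_{j_{r}}$, $y_{r}=z_{i_{r}}$, $n_{r}=-\beta_{i_{r}}$. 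Then $\sum m_{r}=\sum n_{r}$ gives a valid simplex $D$, and $\sum m_{r}x_{r}-\sum n_{r}y_{r}=\sum_{k=0}^{n}\beta_{k}z_{k}=0$ shows $D$ is balanced. Since $P\cap N=\varnothing$, the vertices in the two blocks are disjoint, so for any $k\in P$ the repeating numbers satisfy $\mathbf{m}(z_{k})=\beta_{k}>0=\mathbf{n}(z_{k})$, witnessing non-degeneracy.

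There is no real obstacle here beyond careful bookkeeping: the content of the theorem is essentially that a balanced simplex is a signed partition of a real affine dependence among the $z_{k}$, and the reduction to the completely refined form provided by Lemma \ref{SL1} makes that identification clean. The only subtle point is to observe in the forward direction that one can always rearrange the relation to exhibit nonzero coefficients among $\beta_{1},\ldots,\beta_{n}$, which is automatic because $\sum_{k=0}^{n}\beta_{k}=0$ forces the nonzero coefficients to split across at least two indices.
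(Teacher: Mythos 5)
Your proof is correct and takes essentially the same route as the paper: reduce to a completely refined balanced simplex via Lemmas \ref{SL1} and \ref{B}, convert it into a real affine dependence $\sum_{k}\beta_{k}z_{k}=0$ with $\sum_{k}\beta_{k}=0$, and conversely split such a dependence by sign to assemble a non-degenerate balanced simplex. The only cosmetic difference is that you avoid the paper's relabeling step (taking $x_{1}=z_{0}$) by checking directly that some $\beta_{k}$ with $k\geq 1$ is nonzero.
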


\begin{proof} ($\Rightarrow$) Suppose $Z$ admits a a non-degenerate balanced simplex $D = [x_{j}(m_{j});y_{i}(n_{i})]_{s,t}$.
By Lemma \ref{B} and Lemma \ref{SL1}, we may assume that the simplex $D$ is completely refined.
By definition, we have $\sum m_{j}x_{j} = \sum n_{i}y_{i}$ and $\sum m_{j} = \sum n_{i}$
with at least one (and, in fact, all) $m_{j} \not= 0$. By relabeling the elements of $Z$,
if necessary, we may assume that $x_{1} = z_{0}$. Since $m_{1} = (n_{1} + \cdots + n_{t}) - (m_{2} + \cdots + m_{s})$,
we see that
\begin{eqnarray*}
\bigl( (n_{1} + \cdots + n_{t}) - (m_{2} + \cdots + m_{s})\bigl)z_{0} & = &
\sum\limits_{i} n_{i}y_{i} - \sum\limits_{j \geq 2} m_{j}x_{j}.
\end{eqnarray*}
In other words,
\begin{eqnarray*}
0 & = & \sum\limits_{i} n_{i}(y_{i} - z_{0}) - \sum\limits_{j \geq 2} m_{j}(x_{j} - z_{0}).
\end{eqnarray*}
This shows that the set $\{ z_{1} - z_{0}, z_{2} - z_{0}, \ldots, z_{n} - z_{0} \}$ has a non-empty linearly
dependent subset. Hence the set $\{ z_{1} - z_{0}, z_{2} - z_{0}, \ldots, z_{n} - z_{0} \}$ is linearly dependent.

Now suppose that the set $\{ z_{1} - z_{0}, z_{2} - z_{0}, \ldots, z_{n} - z_{0} \}$ linearly dependent
(when $X$ is considered as a real vector space).
Then there exist real numbers $c_{1}, \ldots, c_{n}$, not all $0$, such that
$$c_{1}(z_{1} - z_{0}) + c_{2}(z_{2} - z_{0}) + \cdots + c_{n}(z_{n} - z_{0}) = 0.$$
Setting $c_{0} = - (c_{1} + \cdots + c_{n})$, so that $c_{0} + c_{1} + \cdots + c_{n} = 0$,
we see that $c_{0}z_{0} + c_{1}z_{1} + \cdots + c_{n}z_{n} = 0$. Thus
\begin{eqnarray}\label{d:1}
\sum\limits_{j : c_{j} > 0} c_{j}z_{j}
& = &
\sum\limits_{i : c_{i} \leq 0} - c_{i}z_{i}.
\end{eqnarray}
By construction,
\begin{eqnarray}\label{d:2}
\sum\limits_{j : c_{j} > 0} c_{j}
& = &
\sum\limits_{i : c_{i} \leq 0} - c_{i},
\end{eqnarray}
and we have already stated that not all of the $c$'s are $0$. By discarding any $c$'s that are equal
to $0$, we deduce from (\ref{d:1}) and (\ref{d:2}) that the set $Z = \{ z_{0}, z_{1}, \ldots, z_{n} \}$
admits a non-degenerate balanced simplex $D$.
(One half of the simplex $D$ is $\{ z_{j}(c_{j}) : c_{j} > 0 \}$ and the other half is
$\{ z_{i}(- c_{i}) : c_{i} < 0 \}$.)
\end{proof}

The following theorem is a variation on a theme developed in Lennard \textit{et al}.\ \cite{Ltw}:
Enflo's \cite{En2} formulation of the generalized roundness of a metric space $(X,d)$ coincides
with $\sup \{ p : (X,d) \text{ has } p\text{-negative type} \}$. This theme was also explored by
Doust and Weston \cite{Dou} within the framework of strict negative type. The new ingredient in
the following theorem is allowing simplices to include possibly negative weights on the vertices.

\begin{theorem}\label{2.3}
Let $p \geq 0$ and let $(X,d)$ be a metric space. Then the following conditions are equivalent:
\begin{enumerate}
\item[(1)] $(X,d)$ has $p$-negative type.

\item[(2)] $\gamma_{p}(D) \geq 0$ for each signed simplex $D$ in $X$.
\end{enumerate}
Moreover, $(X,d)$ has strict $p$-negative type if and only if $\gamma_{p}(D) > 0$ for each
non-degenerate signed simplex $D$ in $(X,d)$.
\end{theorem}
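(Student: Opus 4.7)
The plan is to establish the identity
\begin{equation*}
\gamma_p(D) \;=\; -\tfrac{1}{2}\sum_{j,i=1}^{n}\alpha_j\alpha_i\,d(z_j,z_i)^p
\end{equation*}
that bridges the two formulations, via an explicit bijection between non-degenerate completely refined signed simplices on the one hand, and configurations $(z_1,\ldots,z_n;\alpha_1,\ldots,\alpha_n)$ with distinct $z_k$ and $\boldsymbol\alpha\in\Pi_0\setminus\{\boldsymbol{0}\}$ (after deleting indices with $\alpha_k=0$) on the other. Granting this identity, the whole theorem reduces to the structural dichotomy supplied by Lemma \ref{SL1}.

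Concretely, to pass from a configuration to a simplex I set $J=\{j:\alpha_j>0\}$ and $I=\{i:\alpha_i<0\}$ and define
\begin{equation*}
D \;=\; \bigl[z_j(\alpha_j)\,(j\in J);\; z_i(-\alpha_i)\,(i\in I)\bigr].
\end{equation*}
Because $\sum_k\alpha_k=0$ the side-totals agree, and because the $z_k$ are distinct the resulting simplex is pure, full, positively weighted, completely refined, and non-degenerate (for each vertex exactly one of $\mathbf{m}(z),\mathbf{n}(z)$ is zero and the other equals $|\alpha_k|>0$). The identity itself comes out by splitting the double sum $\sum_{j,i=1}^n \alpha_j\alpha_i\,d(z_j,z_i)^p$ into $J$-$J$, $I$-$I$, and $J$-$I$ blocks (the diagonal vanishes since $d(z,z)=0$) and matching the three signed sums against Definition \ref{S4}.

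For the direction (2)$\Rightarrow$(1), I take $\boldsymbol\alpha\in\Pi_0$, discard zero entries, build $D$ as above, and apply $\gamma_p(D)\geq 0$. For (1)$\Rightarrow$(2), let $D$ be an arbitrary signed simplex; by Lemma \ref{SL1} either $D$ is degenerate, in which case Corollary \ref{0} gives $\gamma_p(D)=0$, or $D$ is equivalent to a completely refined $D^{\ast\ast\ast}=[\tilde x_j(\tilde m_j);\tilde y_i(\tilde n_i)]_{\tilde s,\tilde t}$. By Remark \ref{SR5}(3), $\gamma_p(D)=\gamma_p(D^{\ast\ast\ast})$; and $D^{\ast\ast\ast}$ fits the template above with distinct points $\tilde x_1,\ldots,\tilde x_{\tilde s},\tilde y_1,\ldots,\tilde y_{\tilde t}$ and scalars $(\tilde m_1,\ldots,\tilde m_{\tilde s},-\tilde n_1,\ldots,-\tilde n_{\tilde t})\in\Pi_0$, so the identity combined with the $p$-negative type hypothesis yields $\gamma_p(D^{\ast\ast\ast})\geq 0$.

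The strict statement is an immediate strengthening: the bijection pairs non-degenerate simplices precisely with tuples $\boldsymbol\alpha\in\Pi_0\setminus\{\boldsymbol{0}\}$, so strict $p$-negative type and the condition ``$\gamma_p(D)>0$ for every non-degenerate $D$'' coincide term-for-term under the identity. The only real computation is the algebraic splitting of the double sum, which is routine bookkeeping; the structural work is already done in Lemma \ref{SL1} and Remark \ref{SR5}. I do not anticipate a substantial obstacle — the one thing requiring care is making sure zero-weight entries and coinciding points are handled properly in both directions, which is precisely what the refinement procedures of Lemmas \ref{Proc1}--\ref{Proc3} are built to accommodate.
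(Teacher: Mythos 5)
Your proposal is correct and follows essentially the same route as the paper: the same bridging identity $\gamma_p(D) = -\tfrac{1}{2}\sum_{j,i}\alpha_j\alpha_i\,d(z_j,z_i)^p$ under the translation between completely refined simplices and tuples in $\Pi_0$, with the degenerate case handled by Corollary \ref{0} and the reduction to completely refined simplices by Lemma \ref{SL1} and Remark \ref{SR5}. The only cosmetic difference is that you discard zero entries of $\boldsymbol{\alpha}$ before building the simplex, whereas the paper simply allows nonnegative weights in one half; both are fine.
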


\begin{proof} $(1) \Rightarrow (2)$. Suppose (1) holds.
Let $D = [x_{j}(m_{j});y_{i}(n_{i})]_{s,t}$ be a given signed $(s,t)$-simplex in $X$.
If $D$ is degenerate, then $\gamma_{p}(D) = 0$ by Corollary \ref{0}.
Assume $D$ is non-degenerate. By Lemma
\ref{SL1}, we may further assume that $D$ is completely refined. Let $n = s+t$.
For $1 \leq j \leq s$, set $z_{j} = x_{j}$ and $\alpha_{j} = m_{j}$.
For $1 \leq i \leq t$, set $z_{s+i} = y_{i}$ and $\alpha_{s+i} = - n_{i}$.
Since $m_{1} + \cdots + m_{s} = n_{1} + \cdots + n_{t}$, we see that $\alpha_{1} + \cdots + \alpha_{n} = 0$.
Moreover, $\{ z_{1}, \ldots, z_{n} \}$ is a subset of $X$ (no repetitions)
and each $\alpha_{k} \not= 0$ because the simplex
$D$ is completely refined. It is then a relatively simple matter to check that
\begin{eqnarray}\label{gap:=}
\sum\limits_{j,i} d(z_{j}, z_{i})^{p} \alpha_{j}\alpha_{i}
& = & - 2 \cdot \gamma_{p}(D).
\end{eqnarray}
Therefore $\gamma_{p}(D) \geq 0$. Moreover, $\gamma_{p}(D) > 0$ if $(X,d)$ has strict $p$-negative type.

$(2) \Rightarrow (1)$. Suppose (2) holds.
Let $\{ z_{1}, \ldots, z_{n} \}$ be a given non-empty finite subset of $X$.
Let $\alpha_{1}, \ldots, \alpha_{n}$ be a given collection of real numbers that satisfy
$\alpha_{1} + \cdots + \alpha_{n} = 0$. To avoid triviality, we may assume that not all
of the $\alpha_{k}$'s are $0$. (This forces $n \geq 2$.)
By relabeling $z_{1}, \ldots, z_{n}$, if necessary, we may choose integers $s,t > 0$ such that $s + t = n$,
$\alpha_{1}, \alpha_{2},  \ldots, \alpha_{s} \geq 0$ and $\alpha_{s+1}, \alpha_{s+2},  \ldots, \alpha_{n} < 0$.
Notice that $\alpha_{1} + \cdots + \alpha_{s} = -(\alpha_{s+1} + \cdots + \alpha_{n}) > 0$. Now set
$x_{j} = z_{j}$ and $m_{j} = \alpha_{j}$ for all $j$, $1 \leq j \leq s$. Similarly, set $y_{i} = z_{s+i}$
and $n_{i} = -\alpha_{s+i}$ for all $i$, $1 \leq i \leq t$. By construction,
$D = [x_{j}(m_{j});y_{i}(n_{i})]_{s,t}$ is a non-degenerate signed $(s,t)$-simplex in $X$. By assumption,
$\gamma_{p}(D) \geq 0$. However, the $p$-simplex gap of $D$ also satisfies (\ref{gap:=}), thus:
\begin{eqnarray}\label{gap:s}
\sum\limits_{j,i} d(z_{j}, z_{i})^{p} \alpha_{j}\alpha_{i}
& \leq & 0.
\end{eqnarray}
Moreover, if the inequality in (2) is strict for each non-degenerate signed simplex in $X$, then the
inequality (\ref{gap:s}) will be strict.
\end{proof}

As an immediate application of Theorem \ref{2.3} we obtain the converse of Corollary \ref{0}.

\begin{corollary}\label{cor2.3}
Let $D$ be a signed $(s,t)$-simplex in a metric space $(X,d)$. If $\gamma_{p}(D) = 0$ for all $p \geq 0$,
then $D$ is degenerate.
\end{corollary}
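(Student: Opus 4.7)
The plan is to argue by contradiction, exploiting the hypothesis at $p = 0$ (equivalently, in the limit $p \to 0^+$). I would suppose that $D$ is non-degenerate and aim for a contradiction. By Lemma \ref{SL1}, $D$ is equivalent to a completely refined signed simplex $D^{\ast\ast\ast} = [\tilde{x}_j(\tilde{m}_j); \tilde{y}_i(\tilde{n}_i)]_{\tilde{s}, \tilde{t}}$, and Remark \ref{SR5}(3) guarantees $\gamma_p(D^{\ast\ast\ast}) = \gamma_p(D) = 0$ for every $p \geq 0$. This reduction is the essential use of Lemma \ref{SL1}: it lets me work with a simplex whose vertices are pairwise distinct and whose weights are all strictly positive.

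Next I would translate $D^{\ast\ast\ast}$ into the $(z_k, \alpha_k)$ format introduced in the proof of Theorem \ref{2.3}: set $n = \tilde{s} + \tilde{t}$, put $z_j = \tilde{x}_j$ and $\alpha_j = \tilde{m}_j$ for $1 \leq j \leq \tilde{s}$, and $z_{\tilde{s}+i} = \tilde{y}_i$ and $\alpha_{\tilde{s}+i} = -\tilde{n}_i$ for $1 \leq i \leq \tilde{t}$. Complete refinement guarantees that $z_1, \ldots, z_n$ are pairwise distinct, every $\alpha_k$ is nonzero, and $\sum_k \alpha_k = 0$. The identity (\ref{gap:=}) established in the proof of Theorem \ref{2.3} then yields
\[
\sum_{j,i=1}^{n} d(z_j, z_i)^p \alpha_j \alpha_i \;=\; -2\gamma_p(D^{\ast\ast\ast}) \;=\; 0 \qquad \text{for every } p \geq 0.
\]

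Finally, I would send $p \to 0^+$. Because the $z_k$ are pairwise distinct, $d(z_j, z_i) > 0$ whenever $j \neq i$, so $d(z_j, z_i)^p \to 1$, while the diagonal contributions vanish identically. The identity then collapses to
\[
0 \;=\; \sum_{j \neq i} \alpha_j \alpha_i \;=\; \Bigl(\sum_k \alpha_k\Bigr)^2 - \sum_k \alpha_k^2 \;=\; -\sum_k \alpha_k^2,
\]
which forces every $\alpha_k = 0$, contradicting their nonzeroness. Hence $D$ must be degenerate. I do not anticipate a serious obstacle; the argument just makes explicit that any finite set of distinct points has strict $0$-negative type, which is exactly what Theorem \ref{2.3} packages via the sign of $\gamma_p$ and what justifies labelling the corollary ``immediate.''
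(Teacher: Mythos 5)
Your proof is correct, but its key step differs from the paper's. Both arguments begin the same way: assume $D$ is non-degenerate, invoke Lemma \ref{SL1} together with Remark \ref{SR5}(3) to pass to a completely refined simplex with the same gaps, and recast it in the $(z_{k},\alpha_{k})$ form via the identity (\ref{gap:=}) from the proof of Theorem \ref{2.3}. At that point the paper simply cites the fact that the finite metric space $(S(D),d)$ has \emph{strict} $p$-negative type for some $p>0$ (a consequence of the positivity of the generalized roundness of finite metric spaces and Theorem \ref{LW1}/\ref{LW2}), so $\gamma_{p}(D)>0$ for that $p$, proving the contrapositive. You instead argue by hand near $p=0$: letting $p\to 0^{+}$ in $\sum_{j,i}d(z_{j},z_{i})^{p}\alpha_{j}\alpha_{i}=0$ and using that the $z_{k}$ are pairwise distinct gives $0=\sum_{j\neq i}\alpha_{j}\alpha_{i}=-\sum_{k}\alpha_{k}^{2}$, forcing all $\alpha_{k}=0$, a contradiction. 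Your route is more elementary and self-contained: it avoids the black-box input about strict negative type of finite metric spaces and only uses the hypothesis along a sequence $p_{n}\to 0^{+}$ (in fact, since completely refined weights are positive, one can even evaluate at $p=0$ directly, where $\gamma_{0}(D^{\ast\ast\ast})=\tfrac{1}{2}\bigl(\sum_{j}\tilde{m}_{j}^{2}+\sum_{i}\tilde{n}_{i}^{2}\bigr)>0$; your limit argument just sidesteps any $0^{0}$ convention). What the paper's appeal to cited machinery buys is brevity and the slightly different quantitative conclusion that $\gamma_{p}(D)>0$ already at some single $p>0$, whereas your computation pins down the failure of the equality specifically near $p=0$.
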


\begin{proof} We prove the contrapositive. Suppose $D$ is non-degenerate.
By Remark \ref{SR5} (3) and Lemma \ref{SL1} we may assume that $D$ is completely refined. Now, $(S(D),d)$
is a finite metric space and thus has strict $p$-negative type for some $p > 0$. In particular,
$\gamma_{p}(D) > 0$ by Theorem \ref{2.3}. This completes the proof.
\end{proof}

We are now in a position to rigorously formulate the notion of a non-trivial $p$-polygonal equality.

\begin{definition}\label{S6}
Let $p \geq 0$ and let $(X,d)$ be a metric space.
A \textit{$p$-polygonal equality} in $(X, d)$ is an equality of the form $\gamma_{p}(D) = 0$
where $D$ is a signed simplex in $X$. If, moreover, the underlying simplex $D$ is non-degenerate, we will say that
the $p$-polygonal equality is \textit{non-trivial}.
\end{definition}

The motivation for defining a non-trivial $p$-polygonal equality is clearly evident from Theorem \ref{2.3}.
In fact, Theorem \ref{2.3} implies the following useful lemmas.

\begin{lemma}\label{s:subsets}
Let $p > 0$ and let $(X,d)$ be a metric space that has $p$-negative type.
Then $(X,d)$ has strict $p$-negative type if and only if it admits no non-trivial $p$-polygonal equality.
\end{lemma}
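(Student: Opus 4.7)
The plan is to derive this directly from Theorem \ref{2.3}, which already separates the two conditions ``$\gamma_p(D) \geq 0$ for every signed simplex $D$'' and ``$\gamma_p(D) > 0$ for every non-degenerate signed simplex $D$'' into $p$-negative type and strict $p$-negative type respectively. Since we are assuming $(X,d)$ already has $p$-negative type, we have $\gamma_p(D) \geq 0$ for every signed simplex $D$, so the strict case is equivalent to the statement that no non-degenerate simplex $D$ satisfies $\gamma_p(D)=0$. This is precisely the statement that no non-trivial $p$-polygonal equality exists, using Definition \ref{S6}.

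In more detail, for the forward implication I would assume $(X,d)$ has strict $p$-negative type and invoke Theorem \ref{2.3} to conclude $\gamma_p(D) > 0$ for every non-degenerate signed simplex $D$ in $X$. No equality of the form $\gamma_p(D) = 0$ with $D$ non-degenerate can then hold, so by Definition \ref{S6} the space $(X,d)$ admits no non-trivial $p$-polygonal equality.

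For the reverse implication I would argue the contrapositive: suppose $(X,d)$ does not have strict $p$-negative type. Since it does have $p$-negative type, Definition \ref{neg:gen}(2) supplies a finite subset $\{z_1, \ldots, z_n\} \subseteq X$ and an $n$-tuple $\boldsymbol{\alpha} \in \Pi_0 \setminus \{\boldsymbol{0}\}$ for which (\ref{p:neg}) holds with equality. Proceeding as in the proof of the implication $(2) \Rightarrow (1)$ of Theorem \ref{2.3}, I relabel so that the positive $\alpha_k$'s correspond to $x_j$'s (with weights $m_j = \alpha_j > 0$) and the negative $\alpha_k$'s correspond to $y_i$'s (with weights $n_i = -\alpha_{s+i} > 0$), after discarding any $\alpha_k = 0$. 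Identity (\ref{gap:=}) then gives $\gamma_p(D) = 0$ for the resulting signed $(s,t)$-simplex $D = [x_j(m_j);y_i(n_i)]_{s,t}$, and since the $z_k$'s are distinct with $m_j > 0$ and $n_i > 0$, the simplex $D$ is completely refined and hence non-degenerate by Lemma \ref{SL1}. Thus $(X,d)$ admits a non-trivial $p$-polygonal equality.

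The argument is essentially bookkeeping; the only point requiring care is confirming that the simplex constructed from a non-zero $\boldsymbol{\alpha} \in \Pi_0$ is genuinely non-degenerate, which is transparent once all zero weights are discarded and the $x$- and $y$-vertices are seen to be disjoint with strictly positive weights.
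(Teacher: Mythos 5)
Your proof is correct and follows the paper's own route: the paper derives Lemma \ref{s:subsets} as an immediate consequence of Theorem \ref{2.3}, and your argument simply spells out that deduction (the forward direction from the ``moreover'' clause, the reverse by constructing a non-degenerate simplex from a vanishing quadratic form exactly as in the proof of $(2)\Rightarrow(1)$ of Theorem \ref{2.3}). No gaps; the care you take in checking non-degeneracy of the constructed simplex is exactly the right point to verify.
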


\begin{proof}
Immediate from Theorem \ref{2.3}.
\end{proof}

\begin{lemma}\label{r:line}
Let $(X,d)$ be a metric space whose generalized roundness $\wp$ is non-zero and suppose that
$0 \leq p < \wp$. Then $(X,d)$ admits no non-trivial $p$-polygonal equalities.
\end{lemma}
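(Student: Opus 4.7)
The plan is to deduce the claim directly by chaining two earlier results, namely Theorem \ref{LW1} of Li and Weston together with Lemma \ref{s:subsets}. The key observation is that strict $p$-negative type rules out non-trivial $p$-polygonal equalities, so it suffices to establish strict $p$-negative type for every $p$ in the interval $[0, \wp)$.

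First I would use the definition of generalized roundness. Since $\wp = \sup\{r : (X,d) \text{ has } r\text{-negative type}\} > 0$ and $p < \wp$, we can choose some $q$ with $p < q \leq \wp$ for which $(X,d)$ has $q$-negative type. (Recall from the discussion after Definition \ref{neg:gen} that the set of $r$ for which $(X,d)$ has $r$-negative type is always an interval of the form $[0, \wp]$ or $[0, \wp)$, so such a $q$ exists whenever $p < \wp$.)

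Next, I would invoke Theorem \ref{LW1}: since $(X,d)$ has $q$-negative type and $0 \leq p < q$, the space $(X,d)$ has strict $p$-negative type. Finally, applying Lemma \ref{s:subsets} to $(X,d)$ at exponent $p$, strict $p$-negative type is equivalent to the non-existence of non-trivial $p$-polygonal equalities, which is exactly the conclusion.

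There is essentially no technical obstacle here; the lemma is a short corollary once Theorems \ref{LW1} and \ref{2.3} (via Lemma \ref{s:subsets}) are in hand. The only subtle point worth emphasizing is the selection of the intermediate exponent $q$, which relies on $\wp > 0$ (so that the interval of negative type exponents is non-degenerate) and on the interval structure of the set of negative type exponents recorded just after Definition \ref{neg:gen}.
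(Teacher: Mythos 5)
Your proposal is correct and follows essentially the same route as the paper: the paper's proof is simply ``$(X,d)$ has strict $p$-negative type by Theorem \ref{LW1}; now apply Lemma \ref{s:subsets},'' and you have just made explicit the choice of an intermediate exponent $q$ with $p < q \leq \wp$ at which $q$-negative type holds. The only cosmetic remark is that Lemma \ref{s:subsets} is stated for $p > 0$, so for the endpoint $p = 0$ one appeals directly to the strict case of Theorem \ref{2.3}; the paper glosses over this in exactly the same way.
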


\begin{proof}
$(X,d)$ has strict $p$-negative type by Theorem \ref{LW1}.
Now apply Lemma \ref{s:subsets}.
\end{proof}

\begin{lemma}\label{s:equality}
Let $p > 0$. If a metric space $(X,d)$ admits a non-trivial $p$-polygonal equality, then:
\begin{enumerate}
\item $(X,d)$ does not have $q$-negative type for any $q > p$, and
\item $(X,d)$ does not have strict $p$-negative type.
\end{enumerate}
\end{lemma}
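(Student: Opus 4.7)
The plan is to reduce both conclusions to the tools already assembled in this section, specifically Theorem \ref{2.3}, Lemma \ref{s:subsets}, Lemma \ref{r:line} and Theorem \ref{LW1}. The lemma is a bookkeeping consequence of these results rather than anything requiring a fresh construction, so the main job is to package the case analysis correctly.

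First I would address (2) by a case split on whether $(X,d)$ has $p$-negative type at all. If it does not, then Definition \ref{neg:gen}(2) immediately rules out strict $p$-negative type and (2) holds vacuously. Otherwise $(X,d)$ has $p$-negative type and admits, by hypothesis, a non-trivial $p$-polygonal equality; Lemma \ref{s:subsets} then forces the failure of strict $p$-negative type. This case split is the only subtle point: if one forgets it, one is tempted to invoke Lemma \ref{s:subsets} under the implicit assumption that $(X,d)$ already has $p$-negative type, which the hypothesis does not supply.

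For (1), I would argue by contradiction. Suppose $(X,d)$ has $q$-negative type for some $q > p$. Since $q > p > 0$, Theorem \ref{LW1} (used with $q$ in the role of its ``$p$'' and our $p$ in the role of its ``$q$'') yields that $(X,d)$ has strict $p$-negative type, contradicting part (2) just proved. A shorter alternative is the contrapositive of Lemma \ref{r:line}: the existence of a non-trivial $p$-polygonal equality forces $\wp(X) = 0$ or $p \geq \wp(X)$, so any $q > p$ satisfies $q > \wp(X)$ and hence cannot be a negative-type exponent for $(X,d)$.

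There is no real obstacle; both claims are immediate once one untangles the definitions, and the entire content has already been distilled into Theorem \ref{2.3} and its downstream corollaries. The writeup can therefore be kept to a few lines, essentially a remark crediting the earlier lemmas.
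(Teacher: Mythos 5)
Your proposal is correct and follows essentially the same route as the paper, which also deduces the lemma by combining Theorem \ref{LW1} with Lemma \ref{s:subsets} (assume $q$-negative type for some $q > p$, get strict $p$-negative type, contradict the admitted non-trivial $p$-polygonal equality). Your explicit case split for part (2) is harmless but not really needed, since strict $p$-negative type already entails $p$-negative type by Definition \ref{neg:gen}, so Lemma \ref{s:subsets} applies directly in a proof by contradiction.
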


\begin{proof}
Suppose $(X,d)$ admits a non-trivial $p$-polygonal equality for some $p > 0$.
If we assume that $(X,d)$ has $q$-negative type for some $q > p$, then it must
have strict $p$-negative type by Theorem \ref{LW1}. However,
this would contradict Lemma \ref{s:subsets}.
\end{proof}

Properties of strict negative type and Lemma \ref{s:equality} imply the following non-embedding principle:

\begin{theorem}\label{iso:emb2}
Let $(Y, \rho)$ be a metric space whose generalized roundness $\wp$ is finite.
Let $(X,d)$ be a metric space that has strict $q$-negative type for some $q \geq \wp$. Then,
$(X,d)$ is not isometric to any metric subspace of $(Y, \rho)$ that admits a non-trivial $p$-polygonal
equality for some $p$ such that $\wp \leq p \leq q$.
\end{theorem}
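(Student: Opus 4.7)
The plan is to argue by contradiction. Suppose $(X,d)$ is isometric, via some bijective isometry $f\colon X \to Y'$, to a metric subspace $Y' \subseteq Y$ that admits a non-trivial $p$-polygonal equality $\gamma_{p}(D) = 0$ for some $p \in [\wp, q]$ and some non-degenerate signed simplex $D$ in $Y'$. The first step is to transport this equality back through $f^{-1}$ to $(X,d)$. The $p$-simplex gap of Definition \ref{S4} is built entirely from mutual distances and assigned weights, while non-degeneracy in Definition \ref{S3} is a purely combinatorial condition on the repeating numbers $\mathbf{m}(z), \mathbf{n}(z)$ (which only record which vertices coincide). Both features are preserved by any isometric bijection, so $(X,d)$ itself must admit a non-trivial $p$-polygonal equality.

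The second step is to show that $(X,d)$ has strict $p$-negative type. When $p = q$ this is the hypothesis. When $\wp \leq p < q$, the space $(X,d)$ in particular has $q$-negative type, and Theorem \ref{LW1} (applied with the role of its ``$p$'' played by our $q$) yields strict $r$-negative type for every $r \in [0, q)$; specialising to $r = p$ finishes this step.

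These two steps contradict each other by Lemma \ref{s:subsets}, which asserts that a space of $p$-negative type has strict $p$-negative type if and only if it admits no non-trivial $p$-polygonal equality. Equivalently, one can cite Lemma \ref{s:equality}(2) to conclude directly from step one that $(X,d)$ cannot have strict $p$-negative type, which then contradicts step two.

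I do not anticipate a serious obstacle; the entire content of the theorem is the interaction of the already-developed pieces (isometric invariance of $\gamma_{p}$ and of non-degeneracy, plus Theorem \ref{LW1} and Lemma \ref{s:subsets}). The only mildly delicate point is verifying the isometric invariance of non-degeneracy, which reduces to the fact that an isometry is injective and therefore preserves the combinatorial pattern of vertex coincidences underlying Definition \ref{S3}. Note that the lower bound $\wp \leq p$ is not actually invoked in the contradiction; it merely singles out the non-vacuous range of the statement, since by Lemma \ref{r:line} no metric subspace of $(Y,\rho)$ can admit a non-trivial $p$-polygonal equality when $p < \wp$.
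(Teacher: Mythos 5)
Your proposal is correct and follows essentially the same route as the paper: both rest on Lemma \ref{s:equality} (equivalently Lemma \ref{s:subsets}) to see that a subspace admitting a non-trivial $p$-polygonal equality cannot have strict $p$-negative type, and on Theorem \ref{LW1} (plus the hypothesis when $p=q$) to see that $(X,d)$ does have strict $p$-negative type. Your transporting of the simplex through the isometry is just the explicit form of the paper's implicit use of the isometric invariance of strict $p$-negative type, so the two arguments coincide in substance.
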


\begin{proof}
Let $Z \subseteq Y$. Suppose that $(Z, \rho)$ admits a non-trivial $p$-polygonal equality for
some $p \in [\wp, q]$. By Lemma \ref{s:equality}, $(Z, \rho)$ does not have strict $p$-negative type.
On the other hand, $(X,d)$ has strict $p$-negative type because $p \leq q$. (This is a consequence
of Theorem \ref{LW1}.) Hence $(X,d)$ is not isometric to $(Z, \rho)$.
\end{proof}

\begin{remark}\label{iso:emb1}
If, in the statement of Theorem \ref{iso:emb2}, it is the case that $\wp \leq p < q$, then it suffices to
assume that the metric space $(X,d)$ has $q$-negative type. We will then have $\wp(Z) \leq p$ and $\wp(X) \geq q$.
\end{remark}

\section{Polygonal equalities and virtual degeneracy in $L_{p}$-spaces}\label{sec:4}

In order to apply Theorem \ref{iso:emb2} we turn our attention to the study of $p$-polygonal equalities in
$L_{p}$-spaces, $0 < p < \infty$. Our starting point is the following useful consequence of Corollary \ref{0} and
Lemma \ref{r:line}.

\begin{theorem}\label{r:simplex}
Let $(X,d)$ be a metric space whose generalized roundness $\wp$ is non-zero and suppose that $0 \leq p < \wp$.
Given a signed $(s,t)$-simplex $D = [x_{j}(m_{j});y_{i}(n_{i})]_{s,t}$ in $X$,
we have $\gamma_{p}(D) = 0$ if and only if the simplex $D$ is degenerate.
\end{theorem}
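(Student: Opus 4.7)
The statement is an equivalence, and both directions follow almost immediately from results already established in the excerpt, so the plan is essentially to package two prior facts correctly.

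For the easy direction ($\Leftarrow$), suppose $D$ is degenerate. Then Corollary \ref{0} gives $\gamma_{p}(D) = 0$ for every $p \geq 0$, in particular for the $p$ under consideration. No use is made here of the hypothesis $p < \wp$; this direction holds in any metric space.

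For the nontrivial direction ($\Rightarrow$), I will argue by contradiction. Assume $\gamma_{p}(D) = 0$ but $D$ is non-degenerate. Then by Definition \ref{S6}, the equality $\gamma_{p}(D) = 0$ is a non-trivial $p$-polygonal equality in $(X,d)$. However, $0 \leq p < \wp$ together with the assumption that $\wp$ is non-zero puts us exactly in the hypotheses of Lemma \ref{r:line}, which asserts that $(X,d)$ admits no non-trivial $p$-polygonal equalities. This contradiction forces $D$ to be degenerate.

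There is no real obstacle: the theorem is a direct consequence of Corollary \ref{0} together with Lemma \ref{r:line}, both already proved. The only thing to be careful about is the case $p = 0$, where $d(\cdot,\cdot)^{0}$ is interpreted as the indicator of non-equality (the convention implicit in the earlier development), and one should note that the range $0 \leq p < \wp$ includes $p = 0$ whenever $\wp > 0$; Corollary \ref{0} and Lemma \ref{r:line} both explicitly cover $p = 0$, so the argument goes through uniformly.
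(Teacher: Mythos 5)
Your proof is correct and follows essentially the same route as the paper: the backward direction is Corollary \ref{0}, and the forward direction is exactly the appeal to Lemma \ref{r:line} (the paper states it directly rather than as a contradiction, but the content is identical).
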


\begin{proof}
$(\Rightarrow)$ Suppose $0 \leq p < \wp$ and that $\gamma_{p}(D) = 0$. By Lemma \ref{r:line}, $(X,d)$
admits no non-trivial $p$-polygonal equalities. Hence $D$ must be degenerate.

$(\Leftarrow)$ An immediate consequence of Corollary \ref{0}.
\end{proof}

It is worth recalling that the backward implication in the statement of Theorem \ref{r:simplex} holds for all $p > 0$.

The complex plane endowed with the usual metric has generalized roundness $\wp = 2$.
This leads to the following special case of Theorem \ref{r:simplex} that will be used in the proof of
Theorem \ref{vd:simplex}.

\begin{corollary}\label{cor:simplex}
Let $0 \leq p < 2$.
Given a signed $(s,t)$-simplex $D = [x_{j}(m_{j});y_{i}(n_{i})]_{s,t}$ in the complex plane endowed
with the usual metric, we have $\gamma_{p}(D) = 0$ if and only if the simplex $D$ is degenerate.
\end{corollary}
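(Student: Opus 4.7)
The plan is to deduce the corollary directly from Theorem \ref{r:simplex} applied to $(X,d) = (\mathbb{C}, |\cdot|)$, so the only real work is to verify that the generalized roundness of the complex plane with its usual metric equals $2$.

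For the lower bound $\wp(\mathbb{C}) \geq 2$, I would use the fact that $(\mathbb{C}, |\cdot|)$ is isometric to two-dimensional real Euclidean space, and hence embeds isometrically into $L_{2}(\Omega, \mu)$ for a suitable measure space $(\Omega, \mu)$. Since $L_{2}(\Omega, \mu)$ has $2$-negative type by Theorem \ref{schoenberg}, and since $p$-negative type is inherited by subsets of a metric space, $(\mathbb{C}, |\cdot|)$ has $2$-negative type. For the reverse bound $\wp(\mathbb{C}) \leq 2$, I would exhibit a three-point violation of $q$-negative type for every $q > 2$. Taking the collinear points $z_{1} = 0$, $z_{2} = 1$, $z_{3} = 1/2$ in $\mathbb{R} \subseteq \mathbb{C}$ with weights $\alpha_{1} = \alpha_{2} = 1$ and $\alpha_{3} = -2$ (which sum to zero), a direct computation gives
\begin{eqnarray*}
\sum\limits_{j,i=1}^{3} |z_{j} - z_{i}|^{q} \alpha_{j} \alpha_{i} & = & 2 - 2^{3-q},
\end{eqnarray*}
which is strictly positive for every $q > 2$, violating the defining inequality (\ref{p:neg}) of $q$-negative type. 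Combining the two bounds yields $\wp(\mathbb{C}) = 2$, which is in particular non-zero.

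With $\wp(\mathbb{C}) = 2$ in hand, the hypothesis $0 \leq p < 2$ of the corollary is exactly the hypothesis $0 \leq p < \wp$ of Theorem \ref{r:simplex}, and that theorem (applied to $(\mathbb{C}, |\cdot|)$) immediately delivers the desired biconditional: for any signed $(s,t)$-simplex $D = [x_{j}(m_{j}); y_{i}(n_{i})]_{s,t}$ in $\mathbb{C}$, $\gamma_{p}(D) = 0$ if and only if $D$ is degenerate. There is no substantive obstacle here, in keeping with the fact that the corollary is billed as a specialization of Theorem \ref{r:simplex}; the entire content of the argument sits in the elementary identification $\wp(\mathbb{C}) = 2$.
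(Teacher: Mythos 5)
Your proposal is correct and follows exactly the paper's route: the paper simply notes that the complex plane with its usual metric has generalized roundness $\wp = 2$ and cites Corollary \ref{cor:simplex} as a special case of Theorem \ref{r:simplex}, which is precisely your argument. Your added verification that $\wp(\mathbb{C}) = 2$ (the embedding into an $L_{2}$-space via Theorem \ref{schoenberg} for the lower bound, and the collinear triple $0, 1, 1/2$ with weights $1,1,-2$ giving $2 - 2^{3-q} > 0$ for $q > 2$ for the upper bound) is accurate and merely fills in a classical fact the paper takes for granted.
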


\begin{definition}\label{v:degenerate}
Let $0 < p < \infty$ and suppose that $(\Omega, \mu)$ is a measure space.
A non-degenerate signed $(s,t)$-simplex $D = [x_{j}(m_{j});y_{i}(n_{i})]_{s,t}$ in $L_{p}(\Omega, \mu)$ is said to be
\textit{virtually degenerate} if the family of signed $(s,t)$-simplices $D(\omega) = [x_{j}(\omega)(m_{j});y_{i}(\omega)(n_{i})]_{s,t}$,
$\omega \in \Omega$, are degenerate in the scalar field of $L_{p}(\Omega, \mu)$ $\mu$-a.e.
\end{definition}

Examples of virtually degenerate simplices are constructed in the proofs of Lemma \ref{vd:ex}, Lemma \ref{vds:lem},
Theorem \ref{inf:vds} and Remark \ref{vds:rem}.

\begin{lemma}\label{vd:bal}
Let $0 < p < \infty$ and suppose that $(\Omega, \mu)$ is a measure space.
Let $D = [x_{j}(m_{j});y_{i}(n_{i})]_{s,t}$ be a signed $(s,t)$-simplex in $L_{p}(\Omega, \mu)$.
If $D$ is virtually degenerate, then $D$ is balanced.
\end{lemma}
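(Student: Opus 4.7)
The plan is to reduce balance directly to the pointwise content of virtual degeneracy. First, I would unpack the hypothesis: by Definition \ref{v:degenerate}, there is a $\mu$-null set $N \subseteq \Omega$ such that for every $\omega \in \Omega \setminus N$, the scalar simplex $D(\omega) = [x_{j}(\omega)(m_{j}); y_{i}(\omega)(n_{i})]_{s,t}$ is degenerate in the one-dimensional vector space given by the scalar field ($\mathbb{R}$ or $\mathbb{C}$).

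Next, I would apply Remark \ref{snifter} pointwise. That remark says precisely that for a degenerate simplex in any vector space $X$ the weighted sums of the two halves coincide; specialized to $X = \mathbb{R}$ or $\mathbb{C}$, it gives
\begin{eqnarray*}
\sum_{j=1}^{s} m_{j} x_{j}(\omega) & = & \sum_{i=1}^{t} n_{i} y_{i}(\omega)
\end{eqnarray*}
for every $\omega \in \Omega \setminus N$. Thus the two measurable functions $\sum_{j} m_{j} x_{j}$ and $\sum_{i} n_{i} y_{i}$ agree $\mu$-almost everywhere on $\Omega$, hence represent the same element of $L_{p}(\Omega, \mu)$. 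This is exactly the condition in Definition \ref{S2B} that $D$ is balanced.

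There is essentially no obstacle here: the argument is a one-line invocation of Remark \ref{snifter} in the scalar field, combined with the elementary fact that a.e.-equal measurable functions define the same element of $L_{p}(\Omega, \mu)$. The only minor care needed is that the null set $N$ in the hypothesis of virtual degeneracy can be chosen to be the (finite, hence again null) union of the exceptional sets produced when evaluating each $x_{j}$ and $y_{i}$, so that all pointwise identities hold simultaneously off a single null set.
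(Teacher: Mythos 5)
Your proposal is correct and follows essentially the same route as the paper: invoke Remark \ref{snifter} pointwise (via the degeneracy of each scalar simplex $D(\omega)$) to get $\sum_{j} m_{j}x_{j}(\omega) = \sum_{i} n_{i}y_{i}(\omega)$ for $\mu$-almost all $\omega$, and then conclude that $\sum_{j} m_{j}x_{j} = \sum_{i} n_{i}y_{i}$ in $L_{p}(\Omega,\mu)$. The paper phrases the last step as ``integrate,'' while you pass directly through a.e.\ equality of the two functions, which is the same conclusion (and is the cleaner formulation when $0 < p < 1$).
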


\begin{proof}
By Remark \ref{snifter} and the definition of virtual degeneracy we have $$\sum\limits_{j} m_{j}x_{j}(\omega)
= \sum\limits_{i} n_{i}y_{i}(\omega)$$ for almost all $\omega \in \Omega$. Now integrate to get the
desired conclusion.
\end{proof}

The converse of Lemma \ref{vd:bal} is not true in general; indeed, consider the following points
$x_{1} = (0,0), y_{1} = (1,1), x_{2} = (3,1), y_{2} = (2,0)$ in $\ell_{p}^{(2)}$. The non-degenerate signed
$(2,2)$-simplex $D = [x_{j}(1);y_{i}(1)]_{2,2}$ satisfies $x_{1} + x_{2} = y_{1} + y_{2}$ but it is not
virtually degenerate.

In the case of $\ell_{p}^{(n)}$ as well as $\ell_{p}$ the condition that defines virtual degeneracy will hold everywhere.
There are other settings where this may occur but we will not discuss them here. The importance of virtually
degenerate simplices in $L_{p}(\Omega, \mu)$ is that they give rise to a large class of non-trivial $p$-polygonal equalities.

\begin{lemma}\label{vd:lemma}
Let $0 < p < \infty$ and suppose that $(\Omega, \mu)$ is a measure space.
If $D = [x_{j}(m_{j});y_{i}(n_{i})]_{s,t}$ is a virtually degenerate simplex
in $L_{p}(\Omega, \mu)$, then $\gamma_{p}(D) = 0$. In other words, we have the non-trivial $p$-polygonal equality
\begin{eqnarray*}
\sum\limits_{j_{1} < j_{2}} m_{j_{1}}m_{j_{2}} {\| x_{j_{1}} - x_{j_{2}}\|}_{p}^{p} +
\sum\limits_{i_{1} < i_{2}} n_{i_{1}}n_{i_{2}} {\| y_{i_{1}} - y_{i_{2}}\|}_{p}^{p}
&   =  & \sum\limits_{j, i} m_{j}n_{i} {\| x_{j} - y_{i} \|}_{p}^{p}.
\end{eqnarray*}
\end{lemma}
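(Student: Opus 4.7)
The plan is to exploit the fact that the $p$-th power of the $L_p$ (quasi-)norm is defined by integration: $\|f\|_p^p = \int_\Omega |f(\omega)|^p\, d\mu(\omega)$. This lets us push the entire $p$-simplex gap inside the integral and reduce the problem to a pointwise, scalar-valued statement.

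First, I would unwind the definition of $\gamma_p(D)$ and interchange the finite sums with the integral (no issue of convergence since all sums are finite) to obtain the identity
\begin{eqnarray*}
\gamma_p(D) & = & \int_\Omega \gamma_p\bigl(D(\omega)\bigr)\, d\mu(\omega),
\end{eqnarray*}
where $D(\omega) = [x_j(\omega)(m_j); y_i(\omega)(n_i)]_{s,t}$ is the signed $(s,t)$-simplex in the scalar field of $L_p(\Omega,\mu)$, viewed as a metric space under the usual metric $(a,b) \mapsto |a - b|$. This step is essentially bookkeeping: each term $\|x_j - y_i\|_p^p$ expands into $\int_\Omega |x_j(\omega) - y_i(\omega)|^p\, d\mu(\omega)$, and similarly for the other two blocks of terms.

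Next, I would invoke the hypothesis of virtual degeneracy (Definition \ref{v:degenerate}) to conclude that $D(\omega)$ is a degenerate simplex in the scalar field for $\mu$-a.e.\ $\omega \in \Omega$. Corollary \ref{0} then applies at each such $\omega$ to give $\gamma_p(D(\omega)) = 0$ $\mu$-a.e., so the integral above vanishes and $\gamma_p(D) = 0$. Since virtual degeneracy includes the requirement that $D$ itself be non-degenerate, the resulting equality $\gamma_p(D) = 0$ is a non-trivial $p$-polygonal equality in the sense of Definition \ref{S6}. Finally, writing $\gamma_p(D) = 0$ out via Definition \ref{S4} and rearranging recovers the displayed polygonal identity.

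There is no real obstacle here beyond the formal interchange of finite summation with integration; the crux of the argument is recognizing that virtual degeneracy is tailored precisely so that Corollary \ref{0} applies pointwise and propagates upward through the integral.
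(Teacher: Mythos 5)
Your proposal is correct and follows essentially the same route as the paper: the definition of virtual degeneracy gives degeneracy of $D(\omega)$ $\mu$-a.e., hence (via Corollary \ref{0} applied in the scalar field) the pointwise $p$-polygonal equality $\gamma_{p}(D(\omega)) = 0$ almost everywhere, and integrating over $\Omega$ yields $\gamma_{p}(D) = 0$. The paper's proof is just a compressed version of this, and non-triviality follows exactly as you say, since virtual degeneracy presupposes that $D$ is non-degenerate.
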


\begin{proof}
If we assume that $D = [x_{j}(m_{j});y_{i}(n_{i})]_{s,t}$ is a virtually degenerate simplex in $L_{p}(\Omega, \mu)$
then, by definition, we have
\begin{eqnarray*}
\sum\limits_{j_{1} < j_{2}} m_{j_{1}}m_{j_{2}} | x_{j_{1}}(\omega) - x_{j_{2}}(\omega) |^{p} +
\sum\limits_{i_{1} < i_{2}} n_{i_{1}}n_{i_{2}} | y_{i_{1}}(\omega) - y_{i_{2}}(\omega) |^{p}
&   =  & \sum\limits_{j, i} m_{j}n_{i} | x_{j}(\omega) - y_{i}(\omega) |^{p}
\end{eqnarray*}
for almost all $\omega \in \Omega$. Integrating over $\Omega$ with respect to $\mu$ gives the desired conclusion.
\end{proof}

\begin{theorem}\label{vd:simplex}
Let $0 < p < 2$ and suppose that $(\Omega, \mu)$ is a measure space.
Given a non-degenerate signed $(s,t)$-simplex $D = [x_{j}(m_{j});y_{i}(n_{i})]_{s,t}$ in
$L_{p}(\Omega, \mu)$, we have the non-trivial $p$-polygonal equality
\begin{eqnarray}\label{three}
\sum\limits_{j_{1} < j_{2}} m_{j_{1}}m_{j_{2}} {\| x_{j_{1}} - x_{j_{2}}\|}_{p}^{p} +
\sum\limits_{i_{1} < i_{2}} n_{i_{1}}n_{i_{2}} {\| y_{i_{1}} - y_{i_{2}}\|}_{p}^{p}
&   =  & \sum\limits_{j, i} m_{j}n_{i} {\| x_{j} - y_{i} \|}_{p}^{p}
\end{eqnarray}
if and only if the simplex is virtually degenerate.
\end{theorem}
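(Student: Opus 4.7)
The reverse implication is exactly Lemma \ref{vd:lemma}, so the substance of the theorem lies in the forward direction: assuming $\gamma_{p}(D) = 0$ and $0 < p < 2$, we must show that $D(\omega)$ is degenerate for $\mu$-almost every $\omega \in \Omega$. My plan is to pass the identity $\gamma_{p}(D) = 0$ \emph{pointwise} into the scalar field, where Corollary \ref{cor:simplex} gives a sharp dichotomy.

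First I would observe that, by definition of the $L_{p}$ quasi-norm and linearity of integration,
\begin{eqnarray*}
\gamma_{p}(D) & = & \int_{\Omega} \gamma_{p}(D(\omega))\, d\mu(\omega),
\end{eqnarray*}
since every term appearing in Definition \ref{S4} is of the form (constant)$\cdot \int |x_{j}(\omega) - y_{i}(\omega)|^{p}\, d\mu$, and the integrand $\omega \mapsto \gamma_{p}(D(\omega))$ is a finite linear combination of measurable functions.

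Next, for each fixed $\omega \in \Omega$, the collection $D(\omega) = [x_{j}(\omega)(m_{j});y_{i}(\omega)(n_{i})]_{s,t}$ is a signed $(s,t)$-simplex in the scalar field (either $\mathbb{R}$ or $\mathbb{C}$, both regarded as metric subspaces of the complex plane). Since $0 < p < 2$, Corollary \ref{cor:simplex} gives the scalar-field $p$-simplex gap inequality $\gamma_{p}(D(\omega)) \geq 0$, with equality if and only if $D(\omega)$ is degenerate. Combining this pointwise nonnegativity with the hypothesis $\int_{\Omega}\gamma_{p}(D(\omega))\, d\mu(\omega) = \gamma_{p}(D) = 0$ forces $\gamma_{p}(D(\omega)) = 0$ for $\mu$-almost every $\omega$. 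A second application of Corollary \ref{cor:simplex}, this time in the equality case, yields that $D(\omega)$ is degenerate $\mu$-a.e., which is precisely the definition of virtual degeneracy (Definition \ref{v:degenerate}).

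There is no serious obstacle beyond lining up these ingredients: the only thing to be a little careful about is that the identity $\gamma_{p}(D) = \int \gamma_{p}(D(\omega))\, d\mu$ really does hold termwise from the integral representation of $\|\cdot\|_{p}^{p}$, and that ``degeneracy in the scalar field'' in Definition \ref{v:degenerate} is literally the same notion to which Corollary \ref{cor:simplex} applies. Both points are essentially bookkeeping, so the proof reduces to the three-line argument above.
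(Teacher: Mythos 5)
Your proposal is correct and takes essentially the same route as the paper's own proof: pass to the pointwise simplices $D(\omega)$ in the scalar field, use nonnegativity of the scalar $p$-simplex gap together with the integral representation of $\|\cdot\|_{p}^{p}$ to force $\gamma_{p}(D(\omega)) = 0$ $\mu$-a.e., and then invoke Corollary \ref{cor:simplex} to conclude degeneracy a.e., with Lemma \ref{vd:lemma} handling the converse. The only small bookkeeping correction is that the pointwise inequality $\gamma_{p}(D(\omega)) \geq 0$ is not part of Corollary \ref{cor:simplex} (which only treats the equality case); it follows, as in the paper, from Theorem \ref{2.3} together with the fact that the complex plane has $p$-negative type for $p < 2$.
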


\begin{proof}
$(\Rightarrow)$
It suffices to assume that the scalar field of $L_{p}(\Omega, \mu)$ is the complex plane $\mathbb{C}$.
Let $D = [x_{j}(m_{j});y_{i}(n_{i})]_{s,t}$ be a given non-degenerate signed
$(s,t)$-simplex in $L_{p}(\Omega, \mu)$ for which the equality (\ref{three}) holds.
Then $D(\omega) = [x_{j}(\omega)(m_{j});y_{i}(\omega)(n_{i})]_{s,t}$ is a signed $(s,t)$-simplex
in the complex plane for each $\omega \in \Omega$. Moreover, as the complex plane endowed with the
usual metric has $p$-negative type, it follows that we have $\gamma_{p}(D(\omega)) \geq 0$ for
each $\omega \in \Omega$ by Theorem \ref{2.3}. In other words,
\begin{eqnarray}\label{four}
\sum\limits_{j_{1} < j_{2}} m_{j_{1}}m_{j_{2}} | x_{j_{1}}(\omega) - x_{j_{2}}(\omega) |^{p} +
\sum\limits_{i_{1} < i_{2}} n_{i_{1}}n_{i_{2}} | y_{i_{1}}(\omega) - y_{i_{2}}(\omega) |^{p}
& \leq  & \sum\limits_{j, i} m_{j}n_{i} | x_{j}(\omega) - y_{i}(\omega) |^{p}
\end{eqnarray}
for each $\omega \in \Omega$. The inequalities (\ref{four}) cannot be strict on any set of positive
measure as this would imply
\begin{eqnarray*}
\sum\limits_{j_{1} < j_{2}} m_{j_{1}}m_{j_{2}} {\| x_{j_{1}} - x_{j_{2}}\|}_{p}^{p} +
\sum\limits_{i_{1} < i_{2}} n_{i_{1}}n_{i_{2}} {\| y_{i_{1}} - y_{i_{2}}\|}_{p}^{p}
& <  & \sum\limits_{j, i} m_{j}n_{i} {\| x_{j} - y_{i} \|}_{p}^{p},
\end{eqnarray*}
thereby violating (\ref{three}). So the inequalities (\ref{four}) must
hold at equality $\mu$-a.e.\ on $\Omega$. Therefore the family of signed $(s,t)$-simplices
$D(\omega) = [x_{j}(\omega)(m_{j});y_{i}(\omega)(n_{i})]_{s,t}$, $\omega \in \Omega$, are degenerate in the
complex plane $\mu$-a.e. by the forward implication of Corollary \ref{cor:simplex}.
In other words, the simplex $D$ is virtually degenerate.

$(\Leftarrow)$ This follows immediately from Lemma \ref{vd:lemma}.
\end{proof}

It is notable that the forward implication of Theorem \ref{vd:simplex} does not hold for $p =2$:
Every parallelogram in a Hilbert space gives rise to a non-trivial $2$-polygonal equality due to the
\textit{parallelogram identity} but not all parallelograms are virtually degenerate. In the following
section, Theorem \ref{thm:H}
gives a complete description of the $2$-polygonal equalities in any real or complex inner product space.

The following lemma deals with degenerate simplices that have weight $1$ on each vertex.
This leads to a considerably more general form of Theorem \ref{elsner} (Elsner \textit{et al}.\
\cite[Theorem 2.3]{Els}).

\begin{lemma}\label{d:perm}
Let $n \geq 1$ be an integer. A signed $(n,n)$-simplex of the form $D = [x_{j}(1); y_{i}(1)]_{n,n}$
in a metric space $(X,d)$ is degenerate if and only if there exists a permutation $\pi(k)$ of
$(1,2, \ldots, n)$ such that $x_{\pi(k)} = y_{k}$ for each $k, 1 \leq k \leq n$.
\end{lemma}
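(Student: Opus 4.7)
The plan is to translate the degeneracy condition into a statement about multisets, from which the existence of a matching permutation becomes a routine counting argument.

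First I would unpack what degeneracy means in this very special case. Since every vertex of $D = [x_j(1); y_i(1)]_{n,n}$ carries weight $1$, for any $z \in S(D)$ Definition \ref{S3} reduces to
\begin{eqnarray*}
\mathbf{m}(z) & = & |\{ j : x_j = z \}|, \qquad \mathbf{n}(z) \;=\; |\{ i : y_i = z \}|.
\end{eqnarray*}
Thus $D$ is degenerate precisely when, for every $z \in X$, the multiplicity of $z$ among $x_1,\ldots,x_n$ equals its multiplicity among $y_1,\ldots,y_n$; in other words, when the multisets $\{\!\{x_1,\ldots,x_n\}\!\}$ and $\{\!\{y_1,\ldots,y_n\}\!\}$ coincide.

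For the backward implication I would simply observe that if $x_{\pi(k)} = y_k$ for some permutation $\pi$, then for each $z \in S(D)$,
\begin{eqnarray*}
\mathbf{m}(z) & = & |\{ j : x_j = z \}| \;=\; |\{ k : x_{\pi(k)} = z \}| \;=\; |\{ k : y_k = z \}| \;=\; \mathbf{n}(z),
\end{eqnarray*}
so $D$ is degenerate. For the forward implication, assume $\mathbf{m}(z) = \mathbf{n}(z)$ for all $z \in S(D)$, and construct $\pi$ by induction on $n$: pick any $z = y_1$; then $\mathbf{m}(z) = \mathbf{n}(z) \geq 1$, so some index $j_1$ satisfies $x_{j_1} = y_1$. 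Set $\pi(1) = j_1$, delete the pair $(x_{j_1}, y_1)$, and observe that the remaining $(n-1, n-1)$-simplex still satisfies the multiplicity equality. Iterating produces the desired permutation.

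There is no real obstacle here; the only thing to be a little careful about is bookkeeping in the inductive step, to confirm that deleting a matched pair preserves the condition $\mathbf{m}(z) = \mathbf{n}(z)$ for all remaining points. This is immediate because deletion decreases both sides by exactly $1$ at $z = y_1$ and leaves all other multiplicities untouched.
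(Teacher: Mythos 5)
Your proposal is correct and follows essentially the same route as the paper: both reduce degeneracy (all weights being $1$) to the statement that every $z \in S(D)$ has equal multiplicity among the $x_{j}$ and among the $y_{i}$, and then produce the permutation from this multiset equality. The only difference is cosmetic: you build $\pi$ by induction, matching and deleting one pair at a time, whereas the paper constructs $\pi$ explicitly by sorting each half into blocks of equal points and composing the two resulting permutations.
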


\begin{proof}
($\Rightarrow$) Suppose that the simplex $D = [x_{j}(1); y_{i}(1)]_{n,n}$ in $X$ is degenerate.
Let $S(D)$ denote the set of distinct points in $X$ that appear in $D$. Say, $S(D) = \{ z_{1}, \ldots, z_{l} \}$.
Because each vertex $x_{j}$ or $y_{i}$ in $D$ has weight $1$ we see that ${\mathbf{m}}(z_{k}) = |\{ j : x_{j} = z_{k} \}|$
and ${\mathbf{n}}(z_{k}) = |\{ i : y_{i} = z_{k} \}|$ for each $k, 1 \leq k \leq l$. For notational simplicity,
we set $m_{k} = {\mathbf{m}}(z_{k})$ and $n_{k} = {\mathbf{n}}(z_{k})$ for each $k, 1 \leq k \leq l$. The
assumption on $D$ is that $m_{k} = n_{k}$ for each $k, 1 \leq k \leq l$. By additionally setting $m_{0} = n_{0} = 0$,
we may choose permutations $\phi(k)$ and $\sigma(k)$ of $(1,2, \ldots, n)$ so that
$z_{k} = x_{\phi(m_{0} + \cdots + m_{k-1} + 1)} = \cdots = x_{\phi(m_{0} + \cdots + m_{k})}
= y_{\sigma(n_{0} + \cdots + n_{k-1} + 1)} = \cdots = y_{\sigma(n_{0} + \cdots + n_{k})}$
for each $k, 1 \leq k \leq l$. (Points from each half of the simplex that are equal are now
arranged in blocks of equal length.) It follows from our construction that $x_{\phi(k)} = y_{\sigma(k)}$
for each $k, 1 \leq k \leq n$. All that remains is to define the permutation $\pi = \phi \sigma^{-1}$.
We then have $x_{\pi(k)} = y_{k}$ for each $k, 1 \leq k \leq n$, as asserted.

($\Leftarrow$) Suppose there is a permutation $\pi(k)$ of $(1,2, \ldots, n)$ such that $x_{\pi(k)} = y_{k}$
for each $k, 1 \leq k \leq n$. Let $z \in S(D)$ be given. Once again it is the case that
${\mathbf{m}}(z) = |\{ j : x_{j} = z \}|$ and ${\mathbf{n}}(z) = |\{ i : y_{i} = z \}|$.
Because of the assumption on $D$ we see that if $z = y_{i}$, then $z = x_{j}$ where $j = \pi(i)$.
Hence ${\mathbf{n}}(z) \leq {\mathbf{m}}(z)$. However, it is also the case that $x_{k} = y_{\pi^{-1}(k)}$
for each $k, 1 \leq k \leq n$. So, by the analogous argument, ${\mathbf{m}}(z) \leq {\mathbf{n}}(z)$ too.
In a nutshell, ${\mathbf{m}}(z) = {\mathbf{n}}(z)$. We conclude that $D$ is degenerate.
\end{proof}

\begin{corollary}\label{els:cor0}
Let $0 < p < 2$ and suppose that $(\Omega, \mu)$ is a measure space.
Let $x_{1}, \ldots, x_{n}, y_{1}, \ldots, y_{n}$ be given functions in $L_{p}(\Omega, \mu)$
such that the signed $(n,n)$-simplex $D = [x_{j}(1);y_{i}(1)]_{n,n}$ is non-degenerate. Then
we have the non-trivial $p$-polygonal equality
\begin{eqnarray}\label{els:pee}
\sum\limits_{j_{1} < j_{2}} {\| x_{j_{1}} - x_{j_{2}} \|}_{p}^{p} +
\sum\limits_{i_{1} < i_{2}} {\| y_{i_{1}} - y_{i_{2}} \|}_{p}^{p}
& = &
\sum\limits_{j, i = 1}^{n} {\| x_{j} - y_{i} \|}_{p}^{p}
\end{eqnarray}
if and only if for almost every $\omega \in \Omega$, the numerical sets
$\{ x_{k}(\omega) \}_{k=1}^{n}$ and $\{ y_{k}(\omega) \}_{k=1}^{n}$ are identical.
\end{corollary}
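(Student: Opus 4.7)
The plan is to recognize that the proposed equality (\ref{els:pee}) is, up to rewriting, nothing more than the assertion $\gamma_{p}(D) = 0$ for the signed $(n,n)$-simplex $D = [x_{j}(1); y_{i}(1)]_{n,n}$ appearing in the hypothesis. Since $D$ is assumed non-degenerate and we are in the range $0 < p < 2$, Theorem \ref{vd:simplex} applies directly and converts the statement $\gamma_{p}(D) = 0$ into the equivalent statement that $D$ is virtually degenerate. By Definition \ref{v:degenerate}, virtual degeneracy says exactly that for $\mu$-almost every $\omega \in \Omega$, the scalar simplex $D(\omega) = [x_{j}(\omega)(1); y_{i}(\omega)(1)]_{n,n}$ is degenerate in the scalar field of $L_{p}(\Omega, \mu)$.

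Next I would invoke Lemma \ref{d:perm}, applied pointwise on the relevant full-measure subset of $\Omega$. That lemma characterizes degeneracy of a simplex with unit weights on every vertex as the existence of a permutation $\pi_{\omega}$ of $(1, 2, \ldots, n)$ for which $x_{\pi_{\omega}(k)}(\omega) = y_{k}(\omega)$ for each $k$. This is precisely the definition, given immediately after the statement of Theorem \ref{elsner}, of the numerical sets $\{x_{k}(\omega)\}_{k=1}^{n}$ and $\{y_{k}(\omega)\}_{k=1}^{n}$ being identical. Chaining these two equivalences gives the forward implication.

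For the converse direction, one reverses the argument: if the numerical sets are identical for almost every $\omega$, then Lemma \ref{d:perm} produces, for each such $\omega$, a permutation witnessing degeneracy of $D(\omega)$, so $D$ is virtually degenerate by Definition \ref{v:degenerate}, and Lemma \ref{vd:lemma} then delivers the non-trivial $p$-polygonal equality (\ref{els:pee}). Since Lemma \ref{vd:lemma} is valid for all $0 < p < \infty$, the backward implication in fact holds without the restriction $p < 2$; only the forward direction uses $0 < p < 2$ (via Theorem \ref{vd:simplex}, which in turn relies on Corollary \ref{cor:simplex} and the fact that the complex plane has $p$-negative type for such $p$).

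There is really no serious obstacle here, since the proof is a straightforward concatenation of Theorem \ref{vd:simplex} and Lemma \ref{d:perm}. The only mildly subtle point to flag is that one does not require any measurability of the assignment $\omega \mapsto \pi_{\omega}$: the conclusion asks only that for almost every $\omega$ \emph{some} such permutation exists, which is exactly what the pointwise application of Lemma \ref{d:perm} on a common full-measure set provides.
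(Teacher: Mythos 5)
Your proposal is correct and follows essentially the same route as the paper: Theorem \ref{vd:simplex} converts the equality (\ref{els:pee}) into virtual degeneracy of $D$, and Lemma \ref{d:perm} applied pointwise identifies degeneracy of $D(\omega)$ with the numerical sets $\{x_{k}(\omega)\}$ and $\{y_{k}(\omega)\}$ being identical. Your added observations (that the backward implication holds for all $0 < p < \infty$ via Lemma \ref{vd:lemma}, and that no measurability of $\omega \mapsto \pi_{\omega}$ is needed) are accurate but not a different argument.
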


\begin{proof} By Theorem \ref{vd:simplex}, the equality (\ref{els:pee}) holds if and only if the simplex
$D = [x_{j}(1);y_{i}(1)]_{n,n}$ is virtually degenerate. Now let $\omega \in \Omega$ be given. By Lemma \ref{d:perm},
the simplex $D(\omega) = [x_{j}(\omega)(1);y_{i}(\omega)(1)]_{n,n}$ is degenerate if and only if
there exists a permutation $\pi(\omega, k)$ of $(1,2, \ldots, n)$ such that
$x_{\pi(\omega, k)}(\omega) = y_{k}(\omega)$ for each $k, 1 \leq k \leq n$. In other words,
the simplex $D(\omega) = [x_{j}(\omega)(1);y_{i}(\omega)(1)]_{n,n}$ is degenerate if and only if
the numerical sets $\{ x_{k}(\omega) \}_{k=1}^{n}$ and $\{ y_{k}(\omega) \}_{k=1}^{n}$ are identical.
The corollary is now evident.
\end{proof}

By specializing Corollary \ref{els:cor0} to the case $p = 1$ we obtain Theorem \ref{elsner}
(Elsner \textit{et al}.\ \cite[Theorem 2.3]{Els}).

Lemma \ref{vd:lemma} and Theorem \ref{vd:simplex} have a number of other interesting corollaries. The first is a
classification of the subsets of $L_{p}$-spaces ($0 < p < 2$) that have strict $p$-negative type in terms of virtual degeneracy.

\begin{corollary}\label{Lp}
Let $0 < p < 2$ and suppose that $(\Omega, \mu)$ is a measure space.
A non-empty subset of $L_{p}(\Omega, \mu)$ has strict $p$-negative type if and only if it does not
admit any virtually degenerate simplices.
\end{corollary}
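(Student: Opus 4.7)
The plan is to chain together three results proved earlier: Schoenberg's theorem (Theorem \ref{schoenberg}), the ``no non-trivial equality'' characterization of strict negative type (Lemma \ref{s:subsets}), and the virtual degeneracy characterization of non-trivial $p$-polygonal equalities in $L_{p}$ (Theorem \ref{vd:simplex}). Since nothing new has to be constructed, this will essentially be an assembly argument.

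First I would record the hypothesis on the ambient space. By Theorem \ref{schoenberg}, $L_{p}(\Omega, \mu)$ has $p$-negative type whenever $0 < p \leq 2$, and $p$-negative type is inherited by arbitrary non-empty subsets directly from the defining inequality (\ref{p:neg}), since that inequality only involves finitely many points of the subset and corresponding scalar weights in $\Pi_{0}$. Thus any non-empty $S \subseteq L_{p}(\Omega, \mu)$ has $p$-negative type, which places $S$ in the setting where Lemma \ref{s:subsets} applies.

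Next I would invoke Lemma \ref{s:subsets} to conclude that $S$ has strict $p$-negative type if and only if $S$ admits no non-trivial $p$-polygonal equality, i.e.\ no non-degenerate signed simplex $D$ with vertices in $S$ such that $\gamma_{p}(D) = 0$. Then I would apply the hard direction of Theorem \ref{vd:simplex}, which is the place where the hypothesis $0 < p < 2$ is actually used: for a non-degenerate simplex $D = [x_{j}(m_{j});y_{i}(n_{i})]_{s,t}$ in $L_{p}(\Omega, \mu)$, the $p$-polygonal equality $\gamma_{p}(D) = 0$ holds if and only if $D$ is virtually degenerate. Combining the two biconditionals yields the claimed equivalence, using Lemma \ref{vd:lemma} for the trivial backward direction (any virtually degenerate simplex with vertices in $S$ produces a non-trivial $p$-polygonal equality that obstructs strict $p$-negative type of $S$).

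There is no serious obstacle here: all the analytic content lives in the earlier results. The only care required is bookkeeping, namely that ``$S$ admits a virtually degenerate simplex'' means a non-degenerate signed simplex whose vertices lie in $S$ and whose pointwise evaluations are degenerate in the scalar field $\mu$-a.e., and that such a simplex simultaneously qualifies as a signed simplex in $L_{p}(\Omega, \mu)$ with vertices in $S$ for the purposes of Lemma \ref{s:subsets}. With these identifications, the equivalence is immediate.
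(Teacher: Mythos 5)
Your proposal is correct and follows the same route as the paper, which proves the corollary as an immediate consequence of Lemma \ref{s:subsets} and Theorem \ref{vd:simplex}; your explicit appeal to Theorem \ref{schoenberg} merely spells out the hypothesis of Lemma \ref{s:subsets} that the paper leaves implicit. No further comment is needed.
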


\begin{proof}
This is an immediate consequence of Lemma \ref{s:subsets} and Theorem \ref{vd:simplex}.
\end{proof}

\begin{corollary}\label{aff:Lp}
Let $0 < p < 2$ and suppose that $(\Omega, \mu)$ is a measure space.
If $Z$ is a non-empty subset of $L_{p}(\Omega, \mu)$ that does not have strict $p$-negative type,
then $Z$ is an affinely dependent subset of $L_{p}(\Omega, \mu)$ (when $L_{p}(\Omega, \mu)$ is
considered as a real vector space).
The converse statement is not true in general.
\end{corollary}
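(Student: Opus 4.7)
The proof splits into two tasks. For the forward implication, suppose $Z \subseteq L_{p}(\Omega, \mu)$ fails to have strict $p$-negative type. By Corollary \ref{Lp}, $Z$ admits a virtually degenerate simplex $D = [x_{j}(m_{j}); y_{i}(n_{i})]_{s,t}$; this $D$ is non-degenerate by Definition \ref{v:degenerate} and balanced by Lemma \ref{vd:bal}. The vertex set $S(D) \subseteq Z$ is therefore a finite subset that admits a non-degenerate balanced simplex, so Theorem \ref{thm:B}, applied to $L_{p}(\Omega, \mu)$ viewed as a real vector space, produces a non-trivial affine relation among the points of $S(D)$. Hence $S(D)$, and therefore $Z$, is affinely dependent.

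For the converse I propose the three-point collinear counterexample $Z = \{ 0, f, 2f \}$, where $f$ is any non-zero element of $L_{p}(\Omega, \mu)$ (such an $f$ exists by the standing hypothesis that $L_{p}$-spaces are at least two-dimensional). Affine dependence of $Z$ is immediate from the relation $1 \cdot 0 - 2 \cdot f + 1 \cdot 2f = 0$, whose coefficients $1, -2, 1$ sum to zero. On the other hand, $Z$ is isometric to the subset $\{ 0, {\| f \|}_{p}, 2 {\| f \|}_{p} \}$ of $\mathbb{R}$, and since the complex plane has $\wp = 2$ (as noted preceding Corollary \ref{cor:simplex}), every finite subset of $\mathbb{R} \subset \mathbb{C}$ inherits $p$-negative type for all $p \leq 2$. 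The one step that actually requires computation is the lower bound on $\wp$: with the essentially unique weight profile $(1, 1; 2)$ placed on the outer and middle vertices, the $p$-simplex gap equals $(4 - 2^{p}) {\| f \|}_{p}^{p}$, which vanishes at $p = 2$ and is strictly positive for $p < 2$. Together with Theorem \ref{LW2} this yields $\wp(Z) = 2$ and strict $p$-negative type of $Z$ for every $p < 2$, completing the counterexample.

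The only genuine obstacle in the whole argument is the simplex-gap computation in the converse, which is routine; everything else is bookkeeping assembled from Corollary \ref{Lp}, Lemma \ref{vd:bal}, Definition \ref{v:degenerate}, Theorem \ref{thm:B}, and Theorem \ref{LW2}.
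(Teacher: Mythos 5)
Your argument is correct. The forward implication is exactly the paper's: extract a virtually degenerate simplex (the paper cites Lemma \ref{s:subsets} plus Theorem \ref{vd:simplex}, which is precisely the content of Corollary \ref{Lp} that you invoke), note it is non-degenerate and balanced via Lemma \ref{vd:bal}, and apply Theorem \ref{thm:B}. Where you genuinely diverge is the counterexample for the converse. The paper takes the four affinely dependent points $(0,0),(1,1),(3,1),(2,0)$ in $\ell_{p}^{(2)}$ and asserts that this set admits no virtually degenerate simplices, so that strictness follows from Corollary \ref{Lp}; verifying the assertion requires a (short but unwritten) case check, and the same configuration does double duty earlier as the counterexample to the converse of Lemma \ref{vd:bal}. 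You instead take the three collinear points $\{0,f,2f\}$, which are trivially affinely dependent, and get strictness by observing that the set is isometric to three points of $\mathbb{R}$, which has $2$-negative type. This is a clean and valid alternative; in fact it is slightly over-engineered as written: the positivity of the single $(1,1;2)$ simplex gap is not what gives a ``lower bound on $\wp$'' (that comes from inheriting $2$-negative type from the line), and once you have $2$-negative type, Theorem \ref{LW1} already gives strict $p$-negative type for every $p<2$, so neither the gap computation nor Theorem \ref{LW2} is needed. What the paper's choice buys is a non-collinear example illustrating that even ``genuinely planar'' affine dependence need not destroy strictness, and a single configuration reused for two purposes; what yours buys is a shorter, essentially computation-free verification.
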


\begin{proof}
By Lemma \ref{s:subsets}, $Z$ admits a non-trivial $p$-polygonal equality. So $Z$ admits a virtually
degenerate simplex $D = [x_{j}(m_{j}); y_{i}(n_{i})]_{s,t}$ by Theorem \ref{vd:simplex}. As before
(with a slight abuse of notation), let $S(D) = \{ x_{j}, y_{i} \}$. The simplex $D$ is non-degenerate
by definition of virtual degeneracy and balanced by Lemma \ref{vd:bal}. By Theorem \ref{thm:B},
$S(D)$, and hence $Z$, is an affinely dependent subset of $X = L_{p}(\Omega, \mu)$.

To see that the converse statement is not true in general, consider the points
$z_{0} =(0,0), z_{1} =(1,1), z_{2} =(3,1), z_{3} = (2,0) \in \ell_{p}^{(2)}$. The
set $Z = \{ z_{0}, z_{1}, z_{2}, z_{3} \} \subset \ell_{p}^{(2)}$ is affinely dependent
but it does not admit any virtually degenerate simplices. Hence $Z$ has strict $p$-negative type
by Corollary \ref{Lp}.
\end{proof}

The proof of the following lemma indicates that virtually degenerate simplices are easily
constructed in $L_{p}$-spaces. We will see that this has a number of interesting ramifications.

\begin{lemma}\label{vd:ex}
Let $0 < p < \infty$ and suppose that $(\Omega, \mu)$ is a measure space.
\begin{enumerate}
\item Any linear subspace of $L_{p}(\Omega, \mu)$ that contains a pair of disjointly supported non-zero vectors admits
a virtually degenerate simplex.

\item Every open ball in $L_{p}(\Omega, \mu)$ admits a virtually degenerate simplex.
\end{enumerate}
\end{lemma}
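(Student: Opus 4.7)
The plan is to handle both parts with a single construction. Given a base point $z_{0} \in L_{p}(\Omega, \mu)$ together with a pair of disjointly supported non-zero vectors $u, v \in L_{p}(\Omega, \mu)$, I consider the signed $(2,2)$-simplex
\[
D \;=\; [\, z_{0}(1),\; (z_{0}+u+v)(1);\; (z_{0}+u)(1),\; (z_{0}+v)(1) \,]_{2,2}
\]
and claim that $D$ is non-degenerate and virtually degenerate. The four vertices are pairwise distinct because $u \neq 0$, $v \neq 0$, $u+v \neq 0$ and $u \neq v$, each of which follows from $u, v$ being non-zero with disjoint supports. Hence $S(D) = \{z_{0}, z_{0}+u, z_{0}+v, z_{0}+u+v\}$, and at the vertex $z_{0}$ the repeating numbers are $\mathbf{m}(z_{0}) = 1 \neq 0 = \mathbf{n}(z_{0})$, so $D$ is non-degenerate by Definition \ref{S3}.

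For virtual degeneracy I fix $\omega \in \Omega$ and use that disjoint support of $u, v$ forces at least one of $u(\omega), v(\omega)$ to vanish for $\mu$-almost every $\omega$. A quick case analysis (both zero; exactly one zero) shows that in every case the two numerical sets
\[
\{\, z_{0}(\omega),\; z_{0}(\omega)+u(\omega)+v(\omega) \,\} \quad \text{and} \quad \{\, z_{0}(\omega)+u(\omega),\; z_{0}(\omega)+v(\omega) \,\}
\]
coincide. Lemma \ref{d:perm} then certifies that the scalar simplex $D(\omega)$ is degenerate, which is exactly the condition for $D$ to be virtually degenerate in the sense of Definition \ref{v:degenerate}.

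For part (1), take $z_{0} = 0 \in W$ and let $u, v \in W$ be the given disjointly supported non-zero vectors; all four vertices of $D$ lie in $W$ because $W$ is a linear subspace. For part (2), given an open ball $B(z_{0}, r)$, I invoke the standing assumption that $L_{p}(\Omega, \mu)$ is at least two-dimensional to produce some pair of disjointly supported non-zero vectors $u_{0}, v_{0}$, and then set $u = \varepsilon u_{0}$, $v = \varepsilon v_{0}$ for $\varepsilon > 0$ sufficiently small. The identity $\|u+v\|_{p} = (\|u\|_{p}^{p} + \|v\|_{p}^{p})^{1/p}$, valid for disjointly supported vectors at every $0 < p < \infty$, guarantees that all four vertices lie inside $B(z_{0}, r)$ once $\varepsilon$ is small enough.

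The main obstacle is minor: the only step requiring real care is the pointwise case split that feeds Lemma \ref{d:perm}; the rest is bookkeeping, and the scaling argument in part (2) goes through uniformly in $p$ thanks to the disjoint-support identity for $\|\cdot\|_{p}$.
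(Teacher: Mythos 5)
Your proposal is correct and follows essentially the same route as the paper: the paper's proof uses exactly the $(2,2)$-simplex $[\,0(1),(u+v)(1);u(1),v(1)\,]_{2,2}$ and handles open balls by translating and dilating it, which is what your construction (with the translate $z_{0}$ built in and the contraction by $\varepsilon$) amounts to. The only difference is that you spell out the ``easy to check'' steps --- non-degeneracy via repeating numbers and pointwise degeneracy via Lemma~\ref{d:perm} --- which is fine.
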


\begin{proof} Suppose that $0 < p < \infty$. Consider two disjointly supported non-zero vectors
$u, v \in L_{p}(\Omega, \mu)$. Set $x_{1} = 0, x_{2} = u + v, y_{1} = u$ and $y_{2} = v$. Then
it is easy to check that $D = [x_{1}(1),x_{2}(1); y_{1}(1),y_{2}(1)]_{2,2}$ is a virtually degenerate
simplex in $L_{p}(\Omega, \mu)$. Any linear subspace of $L_{p}(\Omega, \mu)$ that contains $u$ and $v$ also contains $D$.
This establishes (1). On the other hand, every open ball in $L_{p}(\Omega, \mu)$
contains a translate of a dilation or contraction of the virtually degenerate simplex $D$. These operations
preserve virtual degeneracy. Hence every open ball in $L_{p}(\Omega, \mu)$ contains a virtually degenerate simplex.
This establishes (2).
\end{proof}

\begin{remark}
The condition on the vectors appearing in the statement of Lemma \ref{vd:ex} is well understood when $p \geq 1$.
Indeed, it is germane to recall the following basic fact about $L_{p}$-spaces, $p \not= 2$. Let $1 \leq p < 2$
or $2 < p < \infty$. Then, vectors $u, v \in L_{p}(\Omega, \mu)$ are disjointly supported if and only if
\begin{eqnarray}\label{22:rem}
{\| u + v \|}_{p}^{p} + {\| u - v \|}_{p}^{p} & = & 2 \left( {\| u \|}_{p}^{p} + {\| v \|}_{p}^{p} \right).
\end{eqnarray}
Note that (\ref{22:rem}) is the $p$-polygonal equality that arises from the simplex $D$ in the proof
of Lemma \ref{vd:ex}.
\end{remark}

The following definition is motivated by Lemma \ref{vd:ex} (1).

\begin{definition}\label{p:enflo}
Let $0 < p < \infty$ and suppose that $(\Omega, \mu)$ is a measure space. A linear subspace $W$ of $L_{p}(\Omega, \mu)$
is said to have \textit{Property E} if there exists a pair of disjointly supported non-zero vectors $u, v \in W$.
\end{definition}

Linear subspaces of $L_{p}$-spaces that have Property E cannot have strict $p$-negative type.

\begin{corollary}\label{open}
Let $0 < p < \infty$ and suppose that $(\Omega, \mu)$ is a measure space.
\begin{enumerate}
\item No linear subspace of $L_{p}(\Omega, \mu)$ with Property E has strict $p$-negative type.

\item No non-empty open subset of $L_{p}(\Omega, \mu)$ has strict $p$-negative type.
\end{enumerate}
\end{corollary}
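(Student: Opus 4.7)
The plan is to combine Lemma \ref{vd:ex} with Lemma \ref{vd:lemma} and Lemma \ref{s:equality}(2); both parts of the corollary fall directly out of this chain, so the proof will essentially be a packaging exercise that assembles results already in hand.

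For (1), I would start with a linear subspace $W$ of $L_p(\Omega,\mu)$ having Property E. Lemma \ref{vd:ex}(1) produces a virtually degenerate simplex $D$ whose vertices lie in $W$. By Definition \ref{v:degenerate}, virtual degeneracy entails non-degeneracy of $D$, and Lemma \ref{vd:lemma} then gives $\gamma_p(D) = 0$. Thus $W$ (viewed as a metric subspace of $L_p$) admits a non-trivial $p$-polygonal equality, and Lemma \ref{s:equality}(2) rules out strict $p$-negative type for $W$.

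For (2), given any non-empty open subset $U$ of $L_p(\Omega,\mu)$, I would choose an open ball $B \subseteq U$; by Lemma \ref{vd:ex}(2), $B$ admits a virtually degenerate simplex $D$. Since the vertices of $D$ lie in $B \subseteq U$, the set $U$ itself admits the simplex $D$, so the same application of Lemma \ref{vd:lemma} followed by Lemma \ref{s:equality}(2) completes the argument. I do not anticipate a real obstacle: the conceptual content has already been absorbed into Lemma \ref{vd:ex}, where the explicit configuration $x_1 = 0$, $x_2 = u + v$, $y_1 = u$, $y_2 = v$ built from disjointly supported $u, v$ supplies a virtually degenerate simplex, and the translation/dilation invariance of virtual degeneracy lets one fit such a simplex inside any prescribed open ball. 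Notably, the argument is valid for all $0 < p < \infty$, because Lemma \ref{vd:ex}, Lemma \ref{vd:lemma}, and Lemma \ref{s:equality} are all stated in that generality.
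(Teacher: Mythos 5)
Your proposal is correct and follows exactly the chain the paper uses: Lemma \ref{vd:ex} to produce a virtually degenerate simplex in the subspace with Property E (or inside an open ball contained in the open set), Lemma \ref{vd:lemma} to obtain a non-trivial $p$-polygonal equality, and Lemma \ref{s:equality} to exclude strict $p$-negative type. The paper's proof is just this citation of Lemmas \ref{s:equality}, \ref{vd:lemma} and \ref{vd:ex}, so your write-up is the same argument with the details spelled out.
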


\begin{proof}
This follows from Lemmas \ref{s:equality}, \ref{vd:lemma} and \ref{vd:ex}.
\end{proof}

The next observation provides a general isometric embedding principle for $L_{p}$-spaces ($0 < p < \infty$).
We remind the reader that all $L_{p}$-spaces in this paper are assumed to be at least two-dimensional.

\begin{theorem}\label{embed}
Let $0 < p < \infty$ suppose that $(\Omega, \mu)$ is a measure space. Let $(X,d)$ be a metric space
that has strict $q$-negative type for some $q \geq p$. Then $(X,d)$ is not isometric to any metric subspace
of $L_{p}(\Omega, \mu)$ that admits a virtually degenerate simplex. In particular,
\begin{enumerate}
\item $(X,d)$ is not isometric to any linear subspace $W$ of $L_{p}(\Omega, \mu)$ that has Property E, and

\item $(X,d)$ is not isometric to any metric subspace of $L_{p}(\Omega, \mu)$ that has non-empty interior.
\end{enumerate}
\end{theorem}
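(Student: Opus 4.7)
The plan is to derive this theorem as a consequence of the main isometric non-embedding principle (Theorem \ref{iso:emb2}), but in fact it is even more directly accessible by combining Lemma \ref{vd:lemma}, Lemma \ref{s:equality} and Theorem \ref{LW1}. The basic idea is simple: admitting a virtually degenerate simplex forces the existence of a non-trivial $p$-polygonal equality, which in turn obstructs strict $p$-negative type, whereas $(X,d)$ does have strict $p$-negative type under the hypothesis $q \geq p$.

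For the main assertion, suppose $Z \subseteq L_{p}(\Omega, \mu)$ admits a virtually degenerate simplex $D$. First I would invoke Lemma \ref{vd:lemma} to conclude that $\gamma_{p}(D) = 0$, so $(Z, {\|\cdot\|}_{p})$ admits a non-trivial $p$-polygonal equality. Lemma \ref{s:equality} then shows that $(Z, {\|\cdot\|}_{p})$ fails to have strict $p$-negative type. On the other hand, by hypothesis $(X,d)$ has strict $q$-negative type for some $q \geq p$; if $q = p$ this directly gives strict $p$-negative type, and if $q > p$ then Theorem \ref{LW1} upgrades $q$-negative type to strict $p$-negative type. Since strict $p$-negative type is an isometric invariant, $(X,d)$ cannot be isometric to $(Z, {\|\cdot\|}_{p})$.

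Items (1) and (2) then reduce to producing a virtually degenerate simplex in the two specific settings, which is precisely what Lemma \ref{vd:ex} supplies. For (1), any linear subspace $W$ with Property E contains a pair of disjointly supported non-zero vectors, so Lemma \ref{vd:ex}(1) furnishes a virtually degenerate simplex in $W$. For (2), a metric subspace $Z \subseteq L_{p}(\Omega, \mu)$ with non-empty interior contains some open ball, and Lemma \ref{vd:ex}(2) produces a virtually degenerate simplex inside that ball, hence inside $Z$. In both cases, the main assertion applies.

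There is no real obstacle here: the hard work has already been done in Lemmas \ref{vd:lemma}, \ref{s:equality}, \ref{vd:ex} and Theorem \ref{LW1}. The only subtlety worth stating explicitly is the boundary case $q = p$, which needs to be handled without appealing to Theorem \ref{LW1} (since that result produces strict $p$-negative type only for $p$ strictly less than the given exponent); in this case strict $p$-negative type is given by assumption and the argument is unchanged.
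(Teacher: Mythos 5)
Your proposal is correct and follows essentially the same route as the paper: the paper also passes from Lemma \ref{vd:ex} and Lemma \ref{vd:lemma} to a non-trivial $p$-polygonal equality in the subspace $Z$, the only difference being that it then cites Theorem \ref{iso:emb2} (with $\wp(L_{p})\leq p$ finite) rather than inlining that theorem's proof via Lemma \ref{s:equality}, Theorem \ref{LW1} and isometric invariance as you do. Your explicit treatment of the boundary case $q=p$ is a sensible touch, but the argument is the same one the paper uses.
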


\begin{proof}
The generalized roundness of $Y = L_{p}(\Omega, \mu)$ is finite.
Let $Z$ be a metric subspace of $L_{p}(\Omega, \mu)$ that admits a virtually degenerate simplex.
By Lemma \ref{vd:lemma}, $Z$ admits a non-trivial $p$-polygonal equality. As $(X,d)$ is assumed
to have strict $q$-negative for some $q \geq p$, we deduce that $(X,d)$ is not isometric to $Z$
by Theorem \ref{iso:emb2}. Parts (1) and (2) now follow directly from Lemma \ref{vd:ex}.
\end{proof}

\begin{remark}
For any $p \in (0, \infty)$ there are always metric spaces which have strict $q$-negative type for
some $q \geq p$. For instance, ultrametric spaces have strict $q$-negative type for all $q \geq 0$.
In fact, by Faver \textit{et al}.\ \cite[Theorem 5.2]{Fav}, a metric space $(X,d)$ has $q$-negative type
for all $q \geq 0$ if and only if it is ultrametric.
\end{remark}

The following special case of Theorem \ref{embed} is worth emphasizing.

\begin{corollary}\label{iso:lp}
Let $0 < p < r \leq 2$. Suppose that $(\Omega_{1}, \mu_{1})$ and $(\Omega_{2}, \mu_{2})$ are measure spaces. Then,
no metric subspace of $L_{r}(\Omega_{2}, \mu_{2})$ is isometric to any metric subspace of $L_{p}(\Omega_{1}, \mu_{1})$
that admits a virtually degenerate simplex. In particular,
\begin{enumerate}
\item No subset of $L_{r}(\Omega_{2}, \mu_{2})$ is isometric to any linear subspace $W$ of
$L_{p}(\Omega_{1}, \mu_{1})$ that has Property E.

\item No subset of $L_{r}(\Omega_{2}, \mu_{2})$ is isometric to any subset
of $L_{p}(\Omega_{1}, \mu_{1})$ that has non-empty interior.
\end{enumerate}
\end{corollary}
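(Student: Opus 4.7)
The plan is to derive this as an essentially immediate specialization of Theorem \ref{embed} in which the ambient space is $L_{p}(\Omega_{1}, \mu_{1})$ and the candidate isometric image is an arbitrary metric subspace $(X,d) \subseteq L_{r}(\Omega_{2}, \mu_{2})$. The only hypothesis of Theorem \ref{embed} that needs verification is that $(X,d)$ has strict $q$-negative type for some $q \geq p$; once this is in place everything else is just unpacking.

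First, I would observe that since $0 < r \leq 2$, Schoenberg's theorem (Theorem \ref{schoenberg}) tells us that $L_{r}(\Omega_{2}, \mu_{2})$ has $r$-negative type. Negative type is inherited by arbitrary metric subspaces (the defining inequalities in Definition \ref{neg:gen} only refer to finite configurations of points and their pairwise distances), so $(X,d)$ also has $r$-negative type. Since the hypothesis $p < r$ leaves the half-open interval $[p, r)$ non-empty, I can fix any $q \in [p, r)$ — for concreteness $q = (p+r)/2$ — and then apply Theorem \ref{LW1} to deduce that $(X,d)$ has strict $q$-negative type. As $q \geq p$, this is exactly the hypothesis that Theorem \ref{embed} demands.

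Applying Theorem \ref{embed} now gives the main assertion: such an $(X,d)$ cannot be isometric to any metric subspace of $L_{p}(\Omega_{1}, \mu_{1})$ that admits a virtually degenerate simplex. Parts (1) and (2) are then immediate consequences of Lemma \ref{vd:ex}. A linear subspace $W \subseteq L_{p}(\Omega_{1}, \mu_{1})$ with Property E contains a pair of disjointly supported non-zero vectors and hence admits a virtually degenerate simplex by part (1) of that lemma; a subset of $L_{p}(\Omega_{1}, \mu_{1})$ with non-empty interior contains an open ball and hence admits a virtually degenerate simplex by part (2).

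The statement carries no genuine obstacle: once Theorems \ref{schoenberg}, \ref{LW1}, and \ref{embed}, together with Lemma \ref{vd:ex}, are in hand, the corollary is purely a matter of checking that the gap $r > p$ is sufficient to supply the strict $q$-negative type hypothesis with $q \geq p$. If any care is needed, it is only to note explicitly that $r$-negative type is preserved under passage to metric subspaces, and that $p < r$ (not merely $p \leq r$) is used precisely to ensure that the interval $[p, r)$ is non-empty so that Theorem \ref{LW1} can produce a strict exponent at or above $p$.
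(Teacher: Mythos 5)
Your proposal is correct and follows essentially the same route as the paper: Schoenberg's theorem gives $r$-negative type on subsets of $L_{r}(\Omega_{2},\mu_{2})$, Theorem \ref{LW1} upgrades this to a strict exponent at or above $p$ (the paper simply takes $q=p$ where you take $q=(p+r)/2$, an immaterial difference), and Theorem \ref{embed} together with Lemma \ref{vd:ex} finishes the argument.
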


\begin{proof}
Let $p, r, (\Omega_{1}, \mu_{1})$ and $(\Omega_{2}, \mu_{2})$ be given as in the statement of the corollary.
All non-empty subsets of $L_{r}(\Omega_{2}, \mu_{2})$ have $r$-negative type by Theorem \ref{schoenberg}
and hence strict $p$-negative type by Theorem \ref{LW1}. Now apply Theorem \ref{embed} with $q = p$.
\end{proof}

\section{Polygonal equalities in real and complex inner product spaces}\label{sec:5}

Theorem \ref{vd:simplex} classifies all non-trivial $p$-polygonal equalities in $L_{p}(\Omega, \mu)$,
$0 < p < 2$, according to the notion of virtual degeneracy. In the present section we classify all
non-trivial $2$-polygonal equalities in $L_{2}(\Omega, \mu)$. Theorem \ref{thm:H} illustrates that
there is a marked difference between the cases $p < 2$ and $p = 2$. This is because the real
line does not have strict $2$-negative type. The starting point is the following key lemma.

\begin{lemma}\label{thm:inner}
Let $(X, \langle \cdot , \cdot \rangle)$ be a real or complex inner product space with
induced norm $\| \cdot \|$.
Let $s,t > 0$ be integers. If $D = [x_{j}(m_{j});y_{i}(n_{i})]_{s,t}$ is a signed $(s,t)$-simplex
in $X$, then
\begin{eqnarray}\label{sq:lem2}
{\Biggl\| \sum\limits_{j} m_{j}x_{j} - \sum\limits_{i} n_{i}y_{i} \Biggl\|}^{2} & = &
\sum\limits_{j,i} m_{j}n_{i} {\|x_{j} - y_{i}\|}^{2} \nonumber \\
& & - \sum\limits_{j_{1} < j_{2}} m_{j_{1}}m_{j_{2}} {\|x_{j_{1}} - x_{j_{2}}\|}^{2}
- \sum\limits_{i_{1} < i_{2}} n_{i_{1}}n_{i_{2}} {\|y_{i_{1}} - y_{i_{2}}\|}^{2} \\
& \equiv & \gamma_{2}(D). \nonumber
\end{eqnarray}
\end{lemma}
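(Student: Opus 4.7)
The plan is a direct inner-product computation, driven by the defining constraint $M := \sum_{j} m_{j} = \sum_{i} n_{i}$ of a signed $(s,t)$-simplex. The preparatory identity I would establish first is
\[
\sum_{j_{1} < j_{2}} m_{j_{1}} m_{j_{2}} \|x_{j_{1}} - x_{j_{2}}\|^{2} \;=\; M \sum_{j} m_{j} \|x_{j}\|^{2} \;-\; \Bigl\| \sum_{j} m_{j} x_{j} \Bigr\|^{2}.
\]
This drops out by writing the left-hand side as $\tfrac{1}{2} \sum_{j_{1}, j_{2}} m_{j_{1}} m_{j_{2}} \|x_{j_{1}} - x_{j_{2}}\|^{2}$, expanding $\|x_{j_{1}} - x_{j_{2}}\|^{2}$ through the inner product, and then recognising that $\sum_{j_{1}, j_{2}} m_{j_{1}} m_{j_{2}} \langle x_{j_{1}}, x_{j_{2}}\rangle = \|\sum_{j} m_{j} x_{j}\|^{2}$ while $\sum_{j_{2}} m_{j_{2}} = M$.

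By the same argument applied to the $y_{i}$ half of the simplex (using $\sum_{i} n_{i} = M$) I get the analogous
\[
\sum_{i_{1} < i_{2}} n_{i_{1}} n_{i_{2}} \|y_{i_{1}} - y_{i_{2}}\|^{2} \;=\; M \sum_{i} n_{i} \|y_{i}\|^{2} \;-\; \Bigl\| \sum_{i} n_{i} y_{i} \Bigr\|^{2}.
\]
For the cross term, expanding $\|x_{j} - y_{i}\|^{2} = \|x_{j}\|^{2} + \|y_{i}\|^{2} - 2 \operatorname{Re} \langle x_{j}, y_{i}\rangle$ and summing, again using the equal row/column sums, yields
\[
\sum_{j,i} m_{j} n_{i} \|x_{j} - y_{i}\|^{2} \;=\; M \sum_{j} m_{j} \|x_{j}\|^{2} + M \sum_{i} n_{i} \|y_{i}\|^{2} - 2 \operatorname{Re} \Bigl\langle \sum_{j} m_{j} x_{j}, \sum_{i} n_{i} y_{i} \Bigr\rangle.
\]

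Assembling $\gamma_{2}(D)$ by subtracting the two within-class sums from the cross sum, the weighted self-norm contributions $M \sum_{j} m_{j} \|x_{j}\|^{2}$ and $M \sum_{i} n_{i} \|y_{i}\|^{2}$ cancel exactly, leaving
\[
\gamma_{2}(D) \;=\; \Bigl\| \sum_{j} m_{j} x_{j} \Bigr\|^{2} + \Bigl\| \sum_{i} n_{i} y_{i} \Bigr\|^{2} - 2 \operatorname{Re} \Bigl\langle \sum_{j} m_{j} x_{j}, \sum_{i} n_{i} y_{i} \Bigr\rangle,
\]
which is precisely the expansion of $\|\sum_{j} m_{j} x_{j} - \sum_{i} n_{i} y_{i}\|^{2}$. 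The argument is entirely mechanical and identical in the real and complex cases, since the $\operatorname{Re}$ appears automatically from expanding $\|\cdot\|^{2} = \langle \cdot, \cdot\rangle$. There is no genuine obstacle beyond careful bookkeeping of the indices; the only ingredient doing real work is the equal-sum condition $\sum_{j} m_{j} = \sum_{i} n_{i}$, which is exactly what causes the self-norm terms to match and cancel.
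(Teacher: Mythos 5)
Your proof is correct and follows essentially the same route as the paper's: a direct inner-product expansion of the three sums in $\gamma_{2}(D)$, with the equal-weight condition $\sum_{j} m_{j} = \sum_{i} n_{i}$ used exactly where the paper uses it, namely to cancel the weighted $\|x_{j}\|^{2}$ and $\|y_{i}\|^{2}$ terms. Your packaging via the symmetrized sums $\tfrac{1}{2}\sum_{j_{1},j_{2}}$ is a slightly tidier bookkeeping of the same computation, not a different argument.
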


\begin{proof}
It suffices to assume that $(X, \langle \cdot , \cdot \rangle)$ is a complex inner product space.
(The argument for a real inner product space is entirely similar.) We use $\Re z$ to denote the
real part of a complex number $z \in \mathbb{C}$.

Let $L$ and $R$ denote the left and right sides of (\ref{sq:lem2}), respectively. The claim is that $L = R$.
The first thing to notice is that
\begin{eqnarray}\label{L}
L & = & \left\langle \sum\limits_{j} m_{j}x_{j} - \sum\limits_{i} n_{i}y_{i} ,
                \sum\limits_{j} m_{j}x_{j} - \sum\limits_{i} n_{i}y_{i} \right\rangle \nonumber \\
  & = & \sum\limits_{j} m_{j}^{2} {\| x_{j} \|}^{2}
        + 2 \sum\limits_{j_{1} < j_{2}} m_{j_{1}}m_{j_{2}} \Re \langle x_{j_{1}} , x_{j_{2}} \rangle
        - 2 \sum\limits_{j, i} m_{j}n_{i} \Re \langle x_{j} , y_{i} \rangle \nonumber \\
  &   & + \sum\limits_{i} n_{i}^{2} {\| y_{i} \|}^{2}
        + 2 \sum\limits_{i_{1} < i_{2}} n_{i_{1}}n_{i_{2}} \Re \langle y_{i_{1}} , y_{i_{2}} \rangle.
\end{eqnarray}

On the other hand,
\begin{eqnarray}\label{R}
R & = & \sum\limits_{j, i} m_{j}n_{i} \langle x_{j} - y_{i} , x_{j} - y_{i} \rangle
        - \sum\limits_{j_{1} < j_{2}} m_{j_{1}}m_{j_{2}} \langle x_{j_{1}} - x_{j_{2}} , x_{j_{1}} - x_{j_{2}} \rangle
        - \sum\limits_{i_{1} < i_{2}} n_{i_{1}}n_{i_{2}} \langle y_{i_{1}} - y_{i_{2}} , y_{i_{1}} - y_{i_{2}} \rangle \nonumber \\
  & = & \sum\limits_{j, i} m_{j}n_{i} {\| x_{j} \|}^{2} + \sum\limits_{j, i} m_{j}n_{i} {\| y_{i} \|}^{2}
        - 2 \sum\limits_{j, i} m_{j}n_{i} \Re \langle x_{j} , y_{i} \rangle
        - \sum\limits_{j_{1} < j_{2}} m_{j_{1}}m_{j_{2}} ({\| x_{j_{1}} \|}^{2} + {\| x_{j_{2}} \|}^{2}) \nonumber \\
  &   & + 2 \sum\limits_{j_{1} < j_{2}} m_{j_{1}}m_{j_{2}} \Re \langle x_{j_{1}} , x_{j_{2}} \rangle
        - \sum\limits_{i_{1} < i_{2}} n_{i_{1}}n_{i_{2}} ({\| y_{i_{1}} \|}^{2} + {\| y_{i_{2}} \|}^{2})
        + 2 \sum\limits_{i_{1} < i_{2}} n_{i_{1}}n_{i_{2}} \Re \langle y_{i_{1}} , y_{i_{2}} \rangle.
\end{eqnarray}

Comparing the expressions (\ref{L}) and (\ref{R}) for $L$ and $R$ we see that the derivation of (\ref{sq:lem2})
will be complete if we can derive the following identity.

\begin{eqnarray}\label{id:sharp}
m_{1}^{2}{\|x_{1}\|}^{2} + \cdots + m_{s}^{2}{\|x_{s}\|}^{2} + n_{1}^{2}{\|y_{1}\|}^{2} + \cdots + n_{t}^{2}{\|y_{t}\|}^{2} & = &
\sum\limits_{j,i} m_{j}n_{i} {\|x_{j}\|}^{2} + \sum\limits_{j,i} m_{j}n_{i} {\|y_{i}\|}^{2} \nonumber \\
& & - \sum\limits_{j_{1} < j_{2}} m_{j_{1}}m_{j_{2}}\bigl( {\|x_{j_{1}}\|}^{2} + {\|x_{j_{2}}\|}^{2} \bigl) \nonumber \\
& & - \sum\limits_{i_{1} < i_{2}} n_{i_{1}}n_{i_{2}}\bigl( {\|y_{i_{1}}\|}^{2} + {\|y_{i_{2}}\|}^{2} \bigl).
\end{eqnarray}

Now let $L^{\sharp}$ and $R^{\sharp}$ denote the left and right sides of (\ref{id:sharp}). Recalling that
$m_{1} + \cdots + m_{s} = n_{1} + \cdots + n_{t}$ we are now in a position to complete the proof. In fact,
\begin{eqnarray*}
R^{\sharp} & = &
\bigl( n_{1} + \cdots + n_{t} \bigl)\bigl( m_{1}{\|x_{1}\|}^{2} + \cdots + m_{s}{\|x_{s}\|}^{2} \bigl)
+ \bigl( m_{1} + \cdots + m_{s} \bigl)\bigl( n_{1}{\|y_{1}\|}^{2} + \cdots + n_{t}{\|y_{t}\|}^{2} \bigl) \\
& & - \sum\limits_{j_{1} < j_{2}} m_{j_{1}}m_{j_{2}}\bigl( {\|x_{j_{1}}\|}^{2} + {\|x_{j_{2}}\|}^{2} \bigl)
- \sum\limits_{i_{1} < i_{2}} n_{i_{1}}n_{i_{2}}\bigl( {\|y_{i_{1}}\|}^{2} + {\|y_{i_{2}}\|}^{2} \bigl) \\
& = & \bigl( m_{1} + \cdots + m_{s} \bigl)\bigl( m_{1}{\|x_{1}\|}^{2} + \cdots + m_{s}{\|x_{s}\|}^{2} \bigl)
+ \bigl( n_{1} + \cdots + n_{t} \bigl)\bigl( n_{1}{\|y_{1}\|}^{2} + \cdots + n_{t}{\|y_{t}\|}^{2} \bigl) \\
& & - \sum\limits_{j_{1} < j_{2}} m_{j_{1}}m_{j_{2}} \bigl( {\|x_{j_{1}}\|}^{2} + {\|x_{j_{2}}\|}^{2} \bigl)
- \sum\limits_{i_{1} < i_{2}} n_{i_{1}}n_{i_{2}} \bigl( {\|y_{i_{1}}\|}^{2} + {\|y_{i_{2}}\|}^{2} \bigl) \\
& = & m_{1}^{2}{\|x_{1}\|}^{2} + \cdots + m_{s}^{2}{\|x_{s}\|}^{2} + n_{1}^{2}{\|y_{1}\|}^{2} + \cdots + n_{t}^{2}{\|y_{t}\|}^{2} \\
& = & L^{\sharp}.
\end{eqnarray*}
\end{proof}

A special case of Lemma \ref{thm:inner} for the Hilbert space $L_{2}[0,1]$ (with $s = t$ and $m_{j} = 1 = n_{i}$ for all $j,i$)
was noted in passing by Enflo \cite{En2} and has also been recorded in the literature by several other authors.

An immediate upshot of Theorem \ref{2.3} and Lemma \ref{thm:inner} is the well-known result that
every inner product space has $2$-negative type. More importantly, for our purposes,
Lemma \ref{thm:inner} leads directly to a complete description of the $2$-polygonal equalities
in any given real or complex inner product space.

\begin{theorem}\label{thm:H}
Let $D = [x_{j}(m_{j}); y_{i}(n_{i})]_{s,t}$ be a signed $(s,t)$-simplex in a real or complex inner product space $X$.
Then the following conditions are equivalent:
\begin{enumerate}
\item $\gamma_{2}(D) = 0$.

\item $\sum\limits_{j} m_{j}x_{j} = \sum\limits_{i} n_{i}y_{i}$.
\end{enumerate}
\end{theorem}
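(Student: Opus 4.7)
The plan is to read off this theorem directly from Lemma \ref{thm:inner}. That lemma gives the identity
\begin{eqnarray*}
\left\| \sum_{j} m_{j}x_{j} - \sum_{i} n_{i}y_{i} \right\|^{2} & = & \gamma_{2}(D),
\end{eqnarray*}
valid for any signed $(s,t)$-simplex in a real or complex inner product space. Since the induced norm is positive definite, the left-hand side vanishes if and only if the vector inside vanishes, i.e.\ $\sum_{j} m_{j}x_{j} = \sum_{i} n_{i}y_{i}$.

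So the entire proof is a one-line invocation of Lemma \ref{thm:inner} followed by the nondegeneracy of the inner product: $\gamma_{2}(D) = 0 \iff \| \sum m_{j}x_{j} - \sum n_{i}y_{i} \|^{2} = 0 \iff \sum m_{j}x_{j} = \sum n_{i}y_{i}$. There is no genuine obstacle; all the work has been absorbed into the verification of the polarization-style identity in Lemma \ref{thm:inner}, which has already been carried out. The only thing worth noting is that the equivalence holds without any hypothesis on the signs of the $m_{j}$ and $n_{i}$ or on whether $D$ is degenerate, pure, or balanced, so the statement applies to every signed simplex in $X$.
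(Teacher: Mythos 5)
Your proof is correct and is exactly how the paper handles it: Theorem \ref{thm:H} is stated as an immediate consequence of Lemma \ref{thm:inner}, with $\gamma_{2}(D) = \bigl\| \sum_{j} m_{j}x_{j} - \sum_{i} n_{i}y_{i} \bigr\|^{2}$ vanishing precisely when the vector does. Your added remark that no sign or degeneracy hypotheses are needed is accurate and consistent with the paper's usage.
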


\begin{corollary}\label{cor:H1}
Let $X$ be a real or complex inner product space. A metric subspace $Z$ of $X$ has strict $2$-negative type
if and only if it does not admit any non-degenerate balanced simplices.
\end{corollary}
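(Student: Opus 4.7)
The plan is to combine Theorem \ref{thm:H} (which identifies the vanishing of $\gamma_2$ with being balanced) with Lemma \ref{s:subsets} (which identifies strict $p$-negative type with the absence of non-trivial $p$-polygonal equalities). The overall chain of equivalences runs: strict $2$-negative type of $Z$ $\iff$ no non-trivial $2$-polygonal equality in $Z$ $\iff$ no non-degenerate simplex $D$ in $Z$ with $\gamma_2(D)=0$ $\iff$ no non-degenerate balanced simplex in $Z$.

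First, I would observe that Lemma \ref{thm:inner} gives $\gamma_2(D) = \bigl\| \sum_j m_j x_j - \sum_i n_i y_i \bigl\|^2 \geq 0$ for every signed simplex $D$ in the ambient inner product space $X$. By Theorem \ref{2.3}, this means $X$, and hence any metric subspace $Z \subseteq X$, has $2$-negative type. This is the hypothesis required to invoke Lemma \ref{s:subsets} at $p = 2$: $Z$ has strict $2$-negative type if and only if $Z$ admits no non-trivial $2$-polygonal equality. By Definition \ref{S6}, a non-trivial $2$-polygonal equality in $Z$ is precisely an equality $\gamma_2(D) = 0$ where $D$ is a non-degenerate signed simplex whose vertices lie in $Z$.

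Next, I would apply Theorem \ref{thm:H} (valid because $Z$ sits inside the inner product space $X$ and $D$ is a signed simplex in $X$ by inclusion): $\gamma_2(D) = 0$ if and only if $\sum_j m_j x_j = \sum_i n_i y_i$, which is exactly the condition that $D$ be balanced in the sense of Definition \ref{S2B}. Therefore $Z$ admits a non-trivial $2$-polygonal equality if and only if $Z$ admits a non-degenerate balanced simplex. Stitching the two equivalences together yields the corollary.

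There is really no main obstacle here; the corollary is essentially a transcription of Theorem \ref{thm:H} into the language of strict $p$-negative type via Lemma \ref{s:subsets}. The only subtlety worth flagging explicitly is that the notion of ``balanced'' requires the ambient vector space structure of $X$, whereas ``strict $2$-negative type'' is intrinsic to $Z$ as a metric space; the statement is therefore a fact about the isometric embedding $Z \hookrightarrow X$ rather than about $Z$ in isolation. I would also remark in passing, by way of contrast with Corollary \ref{Lp}, that here non-degeneracy plus balancedness replaces virtual degeneracy, reflecting the fact (already visible in Theorem \ref{thm:H} versus Theorem \ref{vd:simplex}) that the Hilbert space case is governed by the pointwise vanishing of a single norm rather than by $\mu$-almost-everywhere degeneracy at the scalar level.
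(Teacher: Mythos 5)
Your argument is correct and is exactly the route the paper takes: the paper's proof is the one-line "Immediate from Lemma \ref{s:subsets} and Theorem \ref{thm:H}," which is precisely your chain of equivalences. The only addition you make is to verify the $2$-negative-type hypothesis for Lemma \ref{s:subsets} via Lemma \ref{thm:inner} and Theorem \ref{2.3}, a detail the paper leaves implicit.
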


\begin{proof}
Immediate from Lemma \ref{s:subsets} and Theorem \ref{thm:H}.
\end{proof}

\begin{corollary}\label{cor:H2}
Let $Z$ be a non-empty metric subspace of a real or complex inner product space $X$. Then,
$Z$ has strict $2$-negative type if and only if $Z$ is an affinely independent subset of $X$
(when $X$ is considered as a real vector space).
\end{corollary}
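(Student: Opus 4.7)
The plan is to combine Corollary \ref{cor:H1} with Theorem \ref{thm:B}, which together already encode both directions of the equivalence once we handle the finite/infinite bookkeeping correctly. First I would invoke Corollary \ref{cor:H1} to replace ``$Z$ has strict $2$-negative type'' with the equivalent condition ``$Z$ admits no non-degenerate balanced simplex.'' The corollary then becomes the statement that $Z$ admits a non-degenerate balanced simplex if and only if $Z$ is affinely dependent (when $X$ is regarded as a real vector space).

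For the forward direction, suppose $Z$ admits a non-degenerate balanced simplex $D = [x_j(m_j); y_i(n_i)]_{s,t}$. The finite set $S(D) \subseteq Z$ is then a finite subset of $Z$ that admits a non-degenerate balanced simplex, so by Theorem \ref{thm:B} (applied with $z_0$ any chosen element of $S(D)$ and $z_1, \ldots, z_n$ the remaining elements), the corresponding differences are linearly dependent over $\mathbb{R}$. Hence $S(D)$ is affinely dependent as a subset of $X$ viewed as a real vector space, and a fortiori so is $Z$.

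For the reverse direction, suppose $Z$ is affinely dependent in $X$ over $\mathbb{R}$. By the very definition of affine (in)dependence for an arbitrary subset, there exists a finite subset $\{z_0, z_1, \ldots, z_n\} \subseteq Z$ for which $\{z_1 - z_0, \ldots, z_n - z_0\}$ is linearly dependent over $\mathbb{R}$. Applying the reverse implication of Theorem \ref{thm:B} to this finite set produces a non-degenerate balanced simplex in $\{z_0, z_1, \ldots, z_n\} \subseteq Z$, as desired.

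The only subtle point, and the one I would be most careful with, is the reduction from the possibly infinite set $Z$ to a finite subset: a simplex by Definition \ref{S1} involves only finitely many vertices, so ``admits a non-degenerate balanced simplex'' is automatically a statement about some finite subset, which is exactly the notion matched by the standard definition of affine (in)dependence of an arbitrary subset in a vector space. Once this is noted the corollary follows immediately by chaining Corollary \ref{cor:H1} and Theorem \ref{thm:B}.
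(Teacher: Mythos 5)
Your proposal is correct and follows essentially the same route as the paper: the paper's proof likewise reduces to a finite subset $\{z_0,\ldots,z_n\}$ and then chains Corollary \ref{cor:H1} with Theorem \ref{thm:B}. Your explicit handling of the passage between the infinite set $Z$ and the finite witnesses (the vertex set $S(D)$ on one side, a finite affinely dependent subset on the other) is exactly the bookkeeping the paper leaves implicit.
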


\begin{proof}
It suffices to assume that $Z = \{ z_{0}, z_{1}, \ldots z_{n} \}$ for some integer $n > 0$.
The corollary then follows trivially from Corollary \ref{cor:H1} and Theorem \ref{thm:B}.
\end{proof}

There are a number of interesting ways to apply Corollary \ref{cor:H2}. For example,
let $(X,d)$ be an infinite metric space of cardinality $\psi$. The proof of Theorem 1.2 in Lemin \cite{Lem} may be
easily adapted to establish the following result: If $(X,d)$ has strict $2$-negative type, then $(X,d)$ may be
isometrically embedded into a real inner product space $I^{\psi}$ of Hamel dimension $\psi$. By Corollary \ref{cor:H2},
the isometric image of $(X,d)$ in $I^{\psi}$ must be affinely independent. Moreover, it follows from Corollary
\ref{cor:H2} that $(X,d)$ may not be isometrically embedded into any real inner product space of Hamel dimension
$\sigma < \psi$. Conversely, Corollary \ref{cor:H2} ensures that if $(X,d)$ is isometric to an
affinely independent subset of a real inner product space of Hamel dimension $\psi$, then $(X,d)$ has strict
$2$-negative type. In summary, we have established the following theorem.

\begin{theorem}\label{lemin}
A metric space of infinite cardinality $\psi$ has strict $2$-negative type if and only if it is isometric
to an affinely independent subset of a real inner product space of Hamel dimension $\psi$.
\end{theorem}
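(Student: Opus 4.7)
The plan is to derive both directions as direct applications of Corollary \ref{cor:H2} together with the Lemin-style isometric embedding result summarized in the paragraph preceding the statement. The key point is that once an embedding of $(X,d)$ into an inner product space is in hand, Corollary \ref{cor:H2} converts the ``strict $2$-negative type'' condition into the purely algebraic condition of affine independence, and conversely. I will also use the elementary fact that strict $p$-negative type depends only on the metric and is therefore an isometric invariant.

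For the forward direction, I would start with an infinite metric space $(X,d)$ of cardinality $\psi$ having strict $2$-negative type. I would invoke the adaptation of Lemin's Theorem~1.2 (already recorded in the preceding discussion) to embed $(X,d)$ isometrically as a subset $Z$ of a real inner product space $I^{\psi}$ of Hamel dimension $\psi$. Since isometries transport strict $2$-negative type, $Z$ itself has strict $2$-negative type in $I^{\psi}$. Applying Corollary \ref{cor:H2} then shows that $Z$ must be affinely independent in $I^{\psi}$, which is exactly the required conclusion.

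For the reverse direction, I would suppose $(X,d)$ is isometric to an affinely independent subset $Z$ of a real inner product space of Hamel dimension $\psi$. By Corollary \ref{cor:H2}, $Z$ has strict $2$-negative type, and since the isometry from $X$ to $Z$ preserves all distances, $(X,d)$ inherits strict $2$-negative type. This completes the equivalence.

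The main obstacle is really packaged inside the appeal to the adapted Lemin embedding theorem, since the remaining steps amount to quoting Corollary \ref{cor:H2} and the isometric invariance of strict $p$-negative type. In particular, one needs to ensure that the ambient inner product space produced by the embedding can be arranged to have Hamel dimension exactly $\psi$ (not smaller, not larger) so that the ``cardinality equals Hamel dimension'' matching in the statement is genuine; the authors have asserted that Lemin's construction adapts to deliver this, so no additional technical work beyond citing Lemin's framework should be required.
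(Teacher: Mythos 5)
Your proposal is correct and follows essentially the same route as the paper: the authors likewise invoke the adaptation of Lemin's Theorem 1.2 to embed a strict $2$-negative type space of cardinality $\psi$ into a real inner product space of Hamel dimension $\psi$, and then apply Corollary \ref{cor:H2} in both directions, using the isometric invariance of strict $2$-negative type. No substantive difference to report.
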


It is a fundamental result of Schoenberg \cite{Sc3} that every metric space of (strict) $2$-negative type is
isometric to a metric subspace of some real Hilbert space. Corollary \ref{cor:H2} therefore leads to a version
of Theorem \ref{embed} that is specific to Hilbert spaces.

\begin{theorem}\label{cor:H3}
A metric space has strict $2$-negative type if and only if it is isometric to an affinely independent
subset of some real Hilbert space.
In particular, no metric space $(X,d)$ of strict $2$-negative type is isometric to any affinely dependent
metric subspace of any Hilbert space $H$ (when $H$ is considered as a real vector space).
\end{theorem}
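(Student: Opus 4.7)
The plan is to combine Schoenberg's classical embedding theorem (invoked immediately before the statement) with Corollary \ref{cor:H2}, which has already done the substantive work. Essentially no new calculation is required; the theorem is a package of two known facts assembled via the observation that strict $2$-negative type is an isometric invariant.

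For the forward implication of the biconditional, suppose $(X,d)$ has strict $2$-negative type. Then in particular $(X,d)$ has $2$-negative type, so Schoenberg's theorem \cite{Sc3} gives an isometric embedding $\iota\colon (X,d) \hookrightarrow H$ into some real Hilbert space $H$. Since the defining inequalities (\ref{p:neg}) depend only on the metric, the image $\iota(X)$ is a metric subspace of $H$ that again has strict $2$-negative type. Corollary \ref{cor:H2} then forces $\iota(X)$ to be affinely independent when $H$ is viewed as a real vector space, giving the desired embedding. For the converse, suppose $(X,d)$ is isometric to an affinely independent subset $Z$ of some real Hilbert space $H$. By Corollary \ref{cor:H2} applied to $Z$ (noting that $H$ is in particular a real inner product space), $Z$ has strict $2$-negative type, and transferring this back along the isometry shows $(X,d)$ has strict $2$-negative type.

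The ``in particular'' clause is then immediate: if $(X,d)$ has strict $2$-negative type and were isometric to some metric subspace $Z$ of a Hilbert space $H$, then $Z$ would inherit strict $2$-negative type, and Corollary \ref{cor:H2} would force $Z$ to be affinely independent in $H$ as a real vector space. Equivalently, $(X,d)$ cannot be isometric to any affinely dependent metric subspace of $H$.

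There is no real obstacle here. The only minor point to take care with is the bookkeeping that strict $2$-negative type is preserved by isometry (which is transparent from Definition \ref{neg:gen}), and the reminder that Corollary \ref{cor:H2} applies to Hilbert spaces since every Hilbert space is in particular a real inner product space once one forgets the complex structure. Beyond that, the statement should be presented as a direct consequence of Schoenberg's theorem combined with Corollary \ref{cor:H2}.
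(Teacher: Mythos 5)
Your proposal is correct and follows essentially the same route as the paper: the equivalence is obtained by combining Schoenberg's embedding theorem with Corollary \ref{cor:H2}, and the ``in particular'' clause is the same contrapositive application of Corollary \ref{cor:H2} to an affinely dependent subspace. Your write-up merely spells out the isometry-invariance bookkeeping that the paper leaves implicit.
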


\begin{proof}
The stated equivalence follows directly from \cite[Theorem 1]{Sc3} and Corollary \ref{cor:H2}.

If $Z$ is an affinely dependent metric subspace of a Hilbert space $H$ (when $H$ is considered as a real vector space),
then $Z$ does not have strict $2$-negative type by Corollary \ref{cor:H2}. In particular, $Z$ is not isometric to
any metric space $(X,d)$ that has strict $2$-negative type.
\end{proof}

We remark that in Theorem \ref{lemin} or Theorem \ref{cor:H3} the metric space could, for example, be any ultrametric
space. This is because all ultrametric spaces have infinite generalized roundness by Theorem 5.1 in Faver \textit{et al}.\
\cite{Fav} and thus strict $2$-negative type by Theorem \ref{LW1}.

Theorems \ref{lemin} and \ref{cor:H3} imply characterizations of strict $p$-negative type for all $p$ such
that $0 \leq p \leq 2$. Indeed, suppose that $0 \leq p \leq 2$ and that $d$ is a metric on a set $X$. Then
the so-called \textit{metric transform} $d^{p/2}$ is also a metric on $X$. Moreover, it is plainly evident
that $(X,d)$ has strict $p$-negative type if and only if $(X, d^{p/2})$ has strict $2$-negative type. Thus,
combining Theorem \ref{lemin} and \ref{cor:H3}, we obtain the following corollary.

\begin{corollary}\label{cor:H4}
Suppose $0 \leq p \leq 2$.
\begin{enumerate}
\item[(1)] A metric space $(X,d)$ has strict $p$-negative type if and only if $(X,d^{p/2})$
is isometric to an affinely independent subset of some real Hilbert space.

\item[(2)] A metric space $(X,d)$ of infinite cardinality $\psi$ has strict $p$-negative type if and only if
$(X,d^{p/2})$ is isometric to an affinely independent subset of a real inner product space of Hamel dimension $\psi$.
\end{enumerate}
\end{corollary}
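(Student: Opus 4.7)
The plan is to exploit the metric transform observation the authors have already set up immediately before the statement, namely that for $0 \leq p \leq 2$, $d^{p/2}$ is a metric on $X$ whenever $d$ is, and $(X,d)$ has strict $p$-negative type if and only if $(X,d^{p/2})$ has strict $2$-negative type. Once this equivalence is in hand, the corollary is just a translation of Theorems \ref{cor:H3} and \ref{lemin} through the metric transform.

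First I would record the negative-type equivalence in detail. Given any finite subset $\{z_1,\ldots,z_n\} \subseteq X$ and any $\boldsymbol{\alpha} \in \Pi_0$, one has the pointwise identity
\begin{eqnarray*}
\sum\limits_{j,i=1}^{n} d(z_j,z_i)^{p}\,\alpha_j\alpha_i & = & \sum\limits_{j,i=1}^{n} \bigl(d^{p/2}(z_j,z_i)\bigr)^{2}\,\alpha_j\alpha_i,
\end{eqnarray*}
so the $p$-negative type inequality for $(X,d)$ is literally the $2$-negative type inequality for $(X,d^{p/2})$; moreover, the inequality is strict for $(X,d)$ at $\boldsymbol{\alpha}$ precisely when it is strict for $(X,d^{p/2})$ at $\boldsymbol{\alpha}$. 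This simultaneously verifies that $(X,d)$ and $(X,d^{p/2})$ have the same status with respect to both ordinary and strict $p$-negative type (at exponents $p$ and $2$, respectively). The fact that $d^{p/2}$ genuinely is a metric on $X$ for $0 \leq p \leq 2$ is Schoenberg's classical metric transform result, which I would simply cite.

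For part (1), I would then apply Theorem \ref{cor:H3} to the metric space $(X,d^{p/2})$: it has strict $2$-negative type if and only if it is isometric to an affinely independent subset of some real Hilbert space. Combined with the equivalence above this is precisely the statement of (1). For part (2), I would similarly apply Theorem \ref{lemin} to $(X,d^{p/2})$, noting that the underlying set and hence the cardinality are unaffected by the metric transform, so $(X,d^{p/2})$ still has infinite cardinality $\psi$; the resulting biconditional is (2).

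There is no real obstacle here: the content of the corollary is entirely contained in Theorems \ref{lemin} and \ref{cor:H3} together with the elementary metric transform identity. The only point that requires care is confirming that strictness transfers both directions in the negative-type equivalence, but this is immediate because the displayed identity above holds for every $\boldsymbol{\alpha} \in \Pi_0$, not merely after summing or averaging, so equality at a given non-zero $\boldsymbol{\alpha}$ on one side is equality on the other.
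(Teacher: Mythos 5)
Your proposal is correct and follows essentially the same route as the paper: the metric transform observation that $(X,d)$ has strict $p$-negative type exactly when $(X,d^{p/2})$ has strict $2$-negative type (immediate from the identity $\bigl(d^{p/2}\bigr)^{2} = d^{p}$ termwise in the inequalities (\ref{p:neg})), followed by applying Theorem \ref{cor:H3} for part (1) and Theorem \ref{lemin} for part (2). Your explicit verification that strictness transfers at each $\boldsymbol{\alpha} \in \Pi_{0}$ simply spells out what the paper calls ``plainly evident.''
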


\begin{remark}
There are versions of Theorem \ref{lemin}, Theorem \ref{cor:H3} and Corollary \ref{cor:H4} for \textit{finite}
metric spaces that are due to Faver \textit{et al}.\ \cite{Fav}. In the present work there is no restriction being placed
on the cardinality of the metric space. The techniques developed in this paper are substantially different from those
used in \cite{Fav}.
\end{remark}

In relation to a problem of Lemin \cite{Lem} concerning the isometric embedding of ultrametric spaces into
Banach spaces, Shkarin \cite{Shk} introduced the class $\mathcal{M}$ of all finite metric spaces
$(Z, d)$, $Z = \{ z_{0}, z_{1}, \ldots, z_{n} \}$, which admit an isometric embedding
$\phi : Z \rightarrow \text{(real) }\ell_{2}$ such that the vectors $\{ \phi(z_{k}) - \phi(z_{0}) : 1 \leq k \leq n\}$
are linearly independent. Theorem 1 in \cite{Shk} shows that any metric space in $\mathcal{M}$ admits an isometric
embedding into any infinite-dimensional Banach space. As noted by Shkarin, it follows from the work of Lemin (as well
as several other authors), that the class $\mathcal{M}$ contains all finite ultrametric spaces. However, every finite
metric space of (strict) $2$-negative type admits an isometric embedding into real $\ell_{2}$ by Schoenberg \cite{Sc1}.
Combining this result with Corollary \ref{cor:H2} we obtain a complete description of Shkarin's class $\mathcal{M}$.

\begin{theorem}\label{shkarin}
Shkarin's class $\mathcal{M}$ consists of all finite metric spaces of strict $2$-negative type.
\end{theorem}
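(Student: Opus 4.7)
The plan is to prove both implications directly from the two ingredients already mentioned in the paragraph preceding the theorem: Schoenberg's embedding result (every finite metric space of $2$-negative type embeds isometrically into real $\ell_{2}$) and Corollary \ref{cor:H2} (a metric subspace of a real inner product space has strict $2$-negative type if and only if it is affinely independent). The bridge between the two formulations is the elementary observation that, for a finite set $\{\phi(z_{0}), \phi(z_{1}), \ldots, \phi(z_{n})\}$ in a real vector space, linear independence of $\{\phi(z_{k}) - \phi(z_{0}) : 1 \leq k \leq n\}$ is exactly affine independence of the set itself.

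For the forward inclusion, I would start with $(Z,d) \in \mathcal{M}$, so there exists an isometric embedding $\phi : Z \rightarrow \ell_{2}$ with $\{\phi(z_{k}) - \phi(z_{0}) : 1 \leq k \leq n\}$ linearly independent. By the observation above, $\phi(Z)$ is an affinely independent subset of $\ell_{2}$, so Corollary \ref{cor:H2} gives that $\phi(Z)$ has strict $2$-negative type. Since strict $p$-negative type is an isometric invariant of metric spaces, $(Z,d)$ itself has strict $2$-negative type.

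For the reverse inclusion, I would assume $(Z,d) = \{z_{0}, z_{1}, \ldots, z_{n}\}$ has strict $2$-negative type. In particular it has $2$-negative type, so Schoenberg's theorem furnishes an isometric embedding $\phi : Z \rightarrow \ell_{2}$. The image $\phi(Z)$ again has strict $2$-negative type, so Corollary \ref{cor:H2} forces $\phi(Z)$ to be affinely independent in $\ell_{2}$. Translating back, this says that $\{\phi(z_{k}) - \phi(z_{0}) : 1 \leq k \leq n\}$ is linearly independent, placing $(Z,d)$ in $\mathcal{M}$.

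There is no real obstacle here once Corollary \ref{cor:H2} and Schoenberg's embedding theorem are in hand; the whole argument is essentially a dictionary translation. The only minor point to be careful about is making explicit that the definition of $\mathcal{M}$ in terms of linear independence of the difference vectors $\phi(z_{k}) - \phi(z_{0})$ coincides with affine independence of $\phi(Z)$ viewed as a subset of $\ell_{2}$ over $\mathbb{R}$, which is what allows Corollary \ref{cor:H2} to be invoked verbatim in both directions.
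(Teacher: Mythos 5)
Your argument is correct and is essentially the paper's own: the theorem is obtained precisely by combining Schoenberg's embedding theorem for finite metric spaces of $2$-negative type with Corollary \ref{cor:H2}, using the fact that linear independence of the difference vectors $\phi(z_{k}) - \phi(z_{0})$ is the same as affine independence of $\phi(Z)$. Your write-up just makes the two directions and the isometric invariance of strict $2$-negative type explicit, which the paper leaves implicit.
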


Faver \textit{et al}.\ \cite{Fav} have given an independent proof of Theorem \ref{shkarin}. It follows
from \cite[Theorem 1]{Shk} that any finite metric space of strict $2$-negative type may be isometrically
embedded into any infinite-dimensional Banach space. In related work, Funano \cite{Fun} has shown that
every proper ultrametric space may be isometrically embedded into $\ell_{p}$ for any $p \geq 1$.
(We recall that a metric space $(Z,d)$ is \textit{proper} if every closed ball in it is compact.)

The purpose of the next section is to take a closer look at linear subspaces of $L_{p}$-spaces that
admit virtually degenerate simplices.

\section{Virtually degenerate subspaces of $L_{p}$-spaces}\label{sec:6}
Determining exactly which linear subspaces of $L_{p}(\Omega, \mu)$ admit a virtually degenerate simplex
is a moot question. For clarity of exposition it is helpful to make the following definition.

\begin{definition}
Let $0 < p < \infty$ and let $(\Omega, \mu)$ be a measure space. A linear subspace $W$ of $L_{p}(\Omega, \mu)$
will be called \textit{virtually degenerate} if it admits a virtually degenerate simplex.
\end{definition}


\begin{remark}\label{vd:sub}
Notice that if $W$ is a virtually degenerate linear subspace of $L_{p}(\Omega, \mu)$, then:
\begin{enumerate}
\item $W$ does not have $q$-negative type for any $q > p$, and

\item $W$ does not have strict $p$-negative type.
\end{enumerate}
Moreover, provided $0 < p \leq 2$, it is the case that
\begin{enumerate}
\item[(3)] $W$ does have $p$-negative type.
\end{enumerate}
\end{remark}

Of course, no normed linear space has (strict) $q$-negative type for any $q > 2$. This is because
normed linear spaces are mid-point convex. So points (1), (2) and (3) are really only of interest
when $0 < p < 2$.

It is necessarily the case that some $L_{p}$-spaces contain linear subspaces that are not virtually
degenerate. This is evident from the following celebrated theorem of Bretagnolle, Dacunha-Castelle
and Krivine \cite{Bre}.

\begin{theorem}[Bretagnolle \textit{et al}.\ \cite{Bre}]\label{bdk}
Let $0 < p \leq 2$ and let $X$ be a real quasi-normed space. Then $X$ is linearly isometric to a
subspace of some $L_{p}$-space if and only if $X$ has $p$-negative type.
\end{theorem}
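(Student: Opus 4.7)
The plan is to treat the two implications separately; only the reverse implication has real content. For the forward direction, if $X$ is linearly isometric to a subspace of $L_{p}(\Omega, \mu)$, then any finite configuration in $X$ is carried to a finite configuration in $L_{p}(\Omega, \mu)$ with identical pairwise distances. Since $L_{p}(\Omega, \mu)$ has $p$-negative type by Schoenberg's Theorem \ref{schoenberg}, the inequalities (\ref{p:neg}) are automatically preserved for configurations in $X$, so $X$ has $p$-negative type.

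For the reverse direction, I would exploit the additive structure of $X$ and pass through symmetric $p$-stable random variables. The condition that $X$ has $p$-negative type says precisely that $x \mapsto {\|x\|}^{p}$ is a (conditionally) negative definite function on the abelian group $(X, +)$ in the classical Schoenberg sense. By the Schoenberg correspondence between negative definite and positive definite functions, this is equivalent to the family $\{ \exp(-t {\|x\|}^{p}) \}_{t > 0}$ being a family of positive definite functions on $X$. Using this positive definiteness together with Kolmogorov's extension theorem, I would construct, on some probability space, an $\mathbb{R}$-linear family $\{ Y_{x} \}_{x \in X}$ of jointly symmetric $p$-stable random variables with the property that each $Y_{x}$ has characteristic function $\mathbb{E}\, e^{i t Y_{x}} = \exp(- {|t|}^{p} {\|x\|}^{p})$ and $Y_{ax + by} = a Y_{x} + b Y_{y}$ almost surely for all scalars $a, b$.

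The final step is to invoke the spectral representation of symmetric $p$-stable processes: such a linear process admits a representation $Y_{x} = \int_{\Omega} f_{x} \, d\Lambda$, where $\Lambda$ is a symmetric $p$-stable random measure with control measure $\mu$ on a suitable $(\Omega, \mu)$, the map $x \mapsto f_{x}$ is $\mathbb{R}$-linear, and $f_{x} \in L_{p}(\Omega, \mu)$ with ${\| f_{x} \|}_{L_{p}(\Omega, \mu)}$ equal to the scale parameter of $Y_{x}$. Since that scale parameter equals ${\|x\|}$ by construction, the assignment $x \mapsto f_{x}$ is the desired linear isometric embedding of $X$ into $L_{p}(\Omega, \mu)$. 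The hard part will be the construction and spectral decomposition of the stable process: for $0 < p < 2$ the variables $Y_{x}$ themselves have infinite $p$-th moment and therefore do not lie in $L_{p}$ of the underlying probability space, so one cannot simply take $x \mapsto Y_{x}$ as the embedding. The technical heart of the Bretagnolle--Dacunha-Castelle--Krivine argument is precisely this transfer from the non-$L_{p}$ random variables $Y_{x}$ to their deterministic $L_{p}$ spectral kernels $f_{x}$, together with verifying that linearity in $x$ is preserved by this passage.
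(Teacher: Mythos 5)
The paper does not prove Theorem \ref{bdk} at all: it is imported verbatim as a classical result of Bretagnolle, Dacunha-Castelle and Krivine \cite{Bre}, so there is no internal proof to compare yours against. What you have written is a faithful outline of the standard argument for that cited theorem. Your forward direction is correct and complete as stated: $p$-negative type is a property of the metric structure, it passes to metric subspaces, and $L_{p}$ has $p$-negative type by Theorem \ref{schoenberg}. Your reverse direction follows the classical stable-process route: translation invariance of the metric turns $p$-negative type into conditional negative definiteness of $x \mapsto \|x\|^{p}$ on the group $(X,+)$, Schoenberg's correspondence converts this into positive definiteness of $\exp(-t\|x\|^{p})$, Kolmogorov consistency yields a linear family of jointly symmetric $p$-stable variables with $\mathbb{E}\, e^{itY_{x}} = \exp(-|t|^{p}\|x\|^{p})$, and the spectral representation of such processes produces the kernels $f_{x} \in L_{p}(\Omega,\mu)$ with $\|f_{x}\|_{p} = \|x\|$. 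That is indeed the architecture of the original proof (see also the Lévy-representation viewpoint in \cite{Kol}). Two caveats you should acknowledge if you intend to flesh this out: first, the Kolmogorov step needs $(t_{1},\ldots,t_{n}) \mapsto \exp(-\|\sum t_{j}x_{j}\|^{p})$ to be an actual characteristic function on $\mathbb{R}^{n}$, which requires continuity of the quasi-norm on finite-dimensional subspaces (a point that deserves a word for general quasi-norms, e.g.\ via Aoki--Rolewicz); second, the spectral representation theorem for symmetric $p$-stable processes is usually stated under a separability hypothesis on the linear span in probability, so for a non-separable $X$ one must either embed separable subspaces and assemble the result or work with a non-$\sigma$-finite $(\Omega,\mu)$, which the statement's ``some $L_{p}$-space'' permits. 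As a sketch, then, your proposal is correct in outline but defers the genuine technical content (the construction and spectral decomposition of the stable process) to external machinery, as you yourself note.
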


For instance, if $0 < p < q \leq 2$, then $L_{q}[0,1]$ is linearly isometric to a subspace $W$
of $L_{p}[0,1]$. As $W$ has $q$-negative type, we see that it cannot be a virtually degenerate linear subspace of
$L_{p}[0,1]$ by Remark \ref{vd:sub}.

Lemma \ref{vd:ex} shows that every linear subspace of $L_{p}(\Omega, \mu)$ with Property E
must be virtually degenerate. However, Property E is rather special and it is by no means necessary
for virtual degeneracy. Lemma \ref{vds:lem} provides one means for constructing virtually
degenerate linear subspaces of $L_{p}(\Omega, \mu)$ that do not necessarily have Property E.
We preface this lemma with some helpful notation.

Suppose that $u, v$ are measurable functions on a measure space $(\Omega, \mu)$. Define
$u[v] = u \cdot \chi_{\supp (v)}$ where $\supp (v)$ denotes the support of $v$. If $u$ and $v$
lie in $L_{p}(\Omega, \mu)$ for some $p \in (0, \infty)$, then $u[v]$ is a well-defined
element of $L_{p}(\Omega, \mu)$.

\begin{lemma}\label{vds:lem}
Let $0 < p < \infty$ and suppose that $(\Omega, \mu)$ is a measure space.
Let $W$ be a linear subspace of $L_p(\Omega,\mu)$ that contains linearly independent vectors $u$
and $v$ such that:
\begin{enumerate}
\item $\supp (u) \cap \supp (v)$ has positive measure, and
\item $u[v]$ and $v[u]$ are linearly dependent.
\end{enumerate}
Then $W$ is virtually degenerate.
\end{lemma}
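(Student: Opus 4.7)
The plan is to construct a virtually degenerate signed simplex whose vertices lie in $\mathrm{span}(u,v)\subseteq W$. Since $\supp(u)\cap\supp(v)$ has positive measure, both $u[v]$ and $v[u]$ are non-zero, so their linear dependence yields a scalar $\lambda\ne 0$ with $u[v]=\lambda v[u]$; equivalently, $u=\lambda v$ almost everywhere on $A:=\supp(u)\cap\supp(v)$. Set $B:=\supp(u)\setminus\supp(v)$ and $C:=\supp(v)\setminus\supp(u)$. If $B$ is a null set, then $u$ is supported on $A$ while $u-\lambda v$ is supported on $C$, and both are non-zero (linear independence of $u,v$ forces $C$ to have positive measure); so $u$ and $u-\lambda v$ are disjointly supported non-zero vectors in $W$ and Lemma~\ref{vd:ex}(1) finishes this case. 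The case where $C$ is null is symmetric.

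Assume now that both $B$ and $C$ have positive measure. A vector $au+bv\in\mathrm{span}(u,v)$ evaluates to $(\lambda a+b)v(\omega)$ on $A$, to $au(\omega)$ on $B$, to $bv(\omega)$ on $C$, and to $0$ on $\Omega\setminus(A\cup B\cup C)$. Consequently, a signed simplex $D$ with vertices $x_j=a_ju+b_jv$ and $y_i=a'_iu+b'_iv$ (weights $m_j,n_i>0$) is virtually degenerate precisely when each of the three scalar signed simplices with vertex labels $\{a_j(m_j)\}$ against $\{a'_i(n_i)\}$, $\{b_j(m_j)\}$ against $\{b'_i(n_i)\}$, and $\{(\lambda a_j+b_j)(m_j)\}$ against $\{(\lambda a'_i+b'_i)(n_i)\}$ is degenerate in $\mathbb{F}$, provided the $\mathbb{F}^2$-valued simplex $(a_j,b_j)$ versus $(a'_i,b'_i)$ is non-degenerate; the linear independence of $u,v$ then guarantees that $D$ itself is non-degenerate in $W$.

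I would exhibit this concretely with the following $(4,4)$-simplex of weight $1$: positive vertices $(2,1-\lambda),(0,1),(1,0),(1,-\lambda)$ and negative vertices $(2,-\lambda),(1,1-\lambda),(1,1),(0,0)$ in $\mathbb{F}^2$. Direct inspection shows the three multisets $\{a_j\}$, $\{b_j\}$, $\{\lambda a_j+b_j\}$ coincide with their primed counterparts (all three become $\{0,1,1,2\}$, $\{-\lambda,0,1,1-\lambda\}$, and $\{0,1,\lambda,\lambda+1\}$ respectively). The eight planar points are distinct whenever $\lambda\notin\{-1,0,1\}$; for $\lambda=\pm 1$ exactly one positive point coincides with one negative point and cancels, leaving a non-degenerate $(3,3)$-subsimplex that still satisfies all three multiset identities. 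Pulling back through $(a,b)\mapsto au+bv$ produces the required virtually degenerate simplex in $W$.

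The main obstacle is discovering the $(4,4)$-configuration. The motivating principle is that it is the formal convolution of the three difference measures $\delta_{(0,1)}-\delta_0$, $\delta_{(1,0)}-\delta_0$, and $\delta_{(1,-\lambda)}-\delta_0$ on $\mathbb{F}^2$; each difference is designed so that its Fourier transform vanishes in one of the three directions $(1,0)$, $(0,1)$, $(\lambda,1)$ dual to the functionals $\pi_1(a,b)=a$, $\pi_2(a,b)=b$, $\pi_3(a,b)=\lambda a+b$. After this observation, the verification reduces to routine bookkeeping on the four pieces $A$, $B$, $C$, and $\Omega\setminus(A\cup B\cup C)$, on the last of which every vertex is identically zero and the scalar simplex is trivially degenerate.
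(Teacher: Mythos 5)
Your argument is correct, and its skeleton is the same as the paper's: both proofs exhibit an explicit non-degenerate simplex with vertices in $\mathrm{span}(u,v)\subseteq W$ and verify pointwise degeneracy separately on $\supp(u)\setminus\supp(v)$, $\supp(u)\cap\supp(v)$, $\supp(v)\setminus\supp(u)$ and the complement. The difference is in the configuration and the bookkeeping. The paper writes $\kappa u[v]=v[u]$ and uses the single weight-one $(3,3)$-simplex $x_1=\kappa u-v$, $x_2=-\kappa u$, $x_3=v$; $y_1=v-\kappa u$, $y_2=\kappa u$, $y_3=-v$: linear independence of $u,v$ makes it pure (hence non-degenerate) for every admissible $\kappa$, and on each region the two triples match up by an explicit permutation, so no case analysis is needed. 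You instead derive a clean sufficient criterion (degeneracy of the three scalar simplices obtained from the functionals $a$, $b$, $\lambda a+b$, plus non-degeneracy of the $\mathbb{F}^2$-simplex, transported by injectivity of $(a,b)\mapsto au+bv$) and realize it with a $(4,4)$ convolution-of-differences configuration; this systematic viewpoint is a genuine plus, but it costs you two case splits that the paper avoids. In fact both of your splits are avoidable: your $(4,4)$-simplex only needs the three multiset identities (always true) and non-degeneracy in $W$ (which you could check directly, as the only cross-coincidences occur at $\lambda=\pm1$ and even then a point common to both sides does not force degeneracy, since the remaining vertices still have unequal repeating numbers), so the preliminary case ``$B$ or $C$ null'' via Lemma \ref{vd:ex}(1) is not needed either. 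One cosmetic caveat in that preliminary case: when $B$ is null, $u$ and $u-\lambda v$ are disjointly supported only up to a null set, so one should pass to representatives vanishing on $B$ (harmless, since virtual degeneracy is an a.e.\ condition). None of this affects correctness.
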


\begin{proof}
We may choose a non-zero scalar $\kappa$ such that $\kappa u[v]-v[u] = 0$.
Let $x_1 = \kappa u - v$, $x_2 = -\kappa u$, $x_3 = v$, $y_1= v- \kappa u$, $y_2 = \kappa u$ and $y_3 = -v$.
The condition that $u$ and $v$ be linearly independent guarantees that $x_{j} \not= y_{i}$ for $j, i \in \{ 1,2,3 \}$,
so the signed $(3,3)$-simplex we construct will be pure. It can be readily checked that:
\begin{enumerate}
\item[(1)] $x_1(\omega)=y_2(\omega)$, $x_2(\omega)=y_1(\omega)$ and $x_3(\omega)=y_3(\omega)$
for $\omega\in\supp (u) \setminus \supp (v)$,
\item[(2)] $x_1(\omega)=y_1(\omega)$, $x_2(\omega)=y_3(\omega)$ and $x_3(\omega)=y_2(\omega)$
for $\omega\in\supp (u)\cap\supp (v)$, and
\item[(3)] $x_1(\omega)=y_3(\omega)$, $x_2(\omega)=y_2(\omega)$ and $x_3(\omega)=y_1(\omega)$
for $\omega\in\supp (v) \setminus \supp (u)$.
\end{enumerate}
This demonstrates that the simplex $D = [x_j(1);y_i(1)]_{3,3}$ in $W$ is virtually degenerate.
\end{proof}

For an example of a linear subspace without Property E which admits a virtually degenerate simplex as per
Lemma \ref{vds:lem}, consider the subspace $W = \Span \left\{ (1,1,0),(0,1,1) \right\}$ of $\ell_p^{(3)}$
and set $u=(1,1,0)$ and $v=(0,1,1)$. The proof of the next theorem shows that Lemma \ref{vds:lem}
can be applied in infinite-dimensional settings.

\begin{theorem}\label{inf:vds}
If $0 < p < \infty$, then $\ell_{p}$ has an infinite-dimensional virtually degenerate linear subspace $W$ without Property E.
\end{theorem}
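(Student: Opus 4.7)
The plan is to construct $W$ explicitly using indicator functions of the ``binary digit'' sets. For each integer $k \geq 0$ set $A_k = \{n \in \mathbb{N} : \text{the $k$-th binary digit of $n$ equals $1$}\}$, so that $\chi_{A_k}(n) = b_k(n)$ where $b_k(n)$ denotes the $k$-th bit of $n$. Fix any $f \in \ell_p$ with $f_n > 0$ for all $n$, for instance $f_n = 2^{-n}$. Let $\mathcal{F}$ denote the linear span of $\{\chi_{A_k} : k \geq 0\}$ inside $\mathbb{R}^{\mathbb{N}}$, and take $W$ to be the linear span in $\ell_p$ of the vectors $f \chi_{A_k}$. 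Every element of $W$ has the form $f \phi$ with $\phi \in \mathcal{F}$; since each such $\phi$ is bounded, $f\phi \in \ell_p$, so $W$ is a linear subspace of $\ell_p$.

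First I would verify that $W$ is infinite-dimensional. The functions $\chi_{A_k}$ are linearly independent on $\mathbb{N}$: evaluating a relation $\sum_{k=0}^{N} \beta_k \chi_{A_k} \equiv 0$ at $n = 2^k$, which has only its $k$-th bit set, isolates $\beta_k = 0$. Multiplication by the strictly positive $f$ is injective from $\mathcal{F}$ into $\ell_p$, so $\dim W = \infty$. Next, virtual degeneracy of $W$ follows at once from Lemma \ref{vds:lem} applied to $u = f \chi_{A_0}$ and $v = f \chi_{A_1}$: the vectors are linearly independent (they disagree at $n = 1$), $\supp(u) \cap \supp(v) = A_0 \cap A_1$ is infinite, and one computes directly that $u[v] = f \chi_{A_0 \cap A_1} = v[u]$, which verifies the linear-dependence hypothesis with $\kappa = 1$.

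The main obstacle is showing that $W$ fails Property E, i.e., that no two non-zero $\phi_1, \phi_2 \in \mathcal{F}$ have disjoint supports in $\mathbb{N}$ (equivalently, that no two non-zero vectors $f\phi_1, f\phi_2 \in W$ do). Writing $\phi_i = \sum_k \beta_{i,k} \chi_{A_k}$, the product $\phi_1(n)\phi_2(n) = \sum_{k,l} \beta_{1,k}\beta_{2,l}\, b_k(n) b_l(n)$ reduces, via $b_k(n)^2 = b_k(n)$, to the multilinear expression
\begin{eqnarray*}
\sum_k \beta_{1,k}\beta_{2,k}\, b_k + \sum_{k<l}(\beta_{1,k}\beta_{2,l} + \beta_{1,l}\beta_{2,k})\, b_k b_l.
\end{eqnarray*}
The monomials $b_S = \prod_{k \in S} b_k$ indexed by finite $S \subseteq \mathbb{N}$ are linearly independent as functions on $\mathbb{N}$, by a Möbius inversion argument on the lattice of finite subsets. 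Thus if $\phi_1 \phi_2 \equiv 0$ on $\mathbb{N}$, every coefficient above must vanish. Picking indices $k^*, l^*$ with $\beta_{1,k^*} \neq 0$ and $\beta_{2,l^*} \neq 0$ (which exist because $\phi_1, \phi_2 \neq 0$), vanishing of the degree-one coefficients forces $\beta_{2,k^*} = \beta_{1,l^*} = 0$ and hence $k^* \neq l^*$; the degree-two coefficient at $\{k^*, l^*\}$ then collapses to $\beta_{1,k^*}\beta_{2,l^*}$, which must equal $0$, contradicting our choices. Consequently some $n \in \mathbb{N}$ lies in both supports, and $W$ has no Property E.
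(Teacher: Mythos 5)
Your construction is correct, and it takes a genuinely different route from the paper's. The paper builds $W$ from vectors supported on the multiples of distinct primes, with entries $2^{-l}$ at coordinate $l$; the failure of Property E is then verified by a direct coordinate evaluation (at $p_{i}$ when the index sets meet, at $p_{j}p_{k}$ when they are disjoint), and virtual degeneracy follows from Lemma \ref{vds:lem} because $x_{j}[x_{i}] = x_{i}[x_{j}]$. Your version replaces the prime-divisibility supports by the binary-digit sets $A_{k}$ and a fixed strictly positive weight $f \in \ell_{p}$, so that, exactly as in the paper, the value of a basis vector at a coordinate depends only on the coordinate; this is what makes $u[v] = v[u]$ and hence makes the appeal to Lemma \ref{vds:lem} essentially identical in both arguments. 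Where you genuinely diverge is in ruling out disjointly supported pairs: instead of exhibiting an explicit common coordinate, you expand $\phi_{1}\phi_{2}$ in the multilinear monomials $b_{S}$, prove these are linearly independent as functions on $\mathbb{N}$, and read off a contradiction from the degree-one and degree-two coefficients. This is a more algebraic argument and arguably more robust (it does not depend on the arithmetic of the chosen supports), at the cost of needing the auxiliary independence of the $b_{S}$, which you only sketch via M\"{o}bius inversion (a minimal-set evaluation at $n_{S} = \sum_{k \in S}2^{k}$ settles it quickly); the paper's evaluation argument is more hands-on and also yields the stronger observation that any finite family in its $W$ has intersecting supports. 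One cosmetic point: to justify that $u = f\chi_{A_{0}}$ and $v = f\chi_{A_{1}}$ are linearly independent, disagreement at $n=1$ alone is not quite the right phrase; what you should say is that $v(1) = 0 \neq u(1)$ forces any dependence to kill $u$'s coefficient, and $v \neq 0$ (e.g.\ $v(2) \neq 0$) then kills the other.
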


\begin{proof} Let $0 < p < \infty$.
We construct an infinite-dimensional linear subspace $W$ of $\ell_{p}$ that does not have Property E but which
satisfies the hypotheses of Lemma \ref{vds:lem}. The construction proceeds in the following manner.
Let $p_{n}$ denote the $n$th prime number and define a vector $x_{n} = (x_{n}(l)) \in \ell_{p}$ by setting:
\begin{eqnarray*}
x_{n}(l) & = &\left\{
     \begin{array}{ll}
       2^{-l} & \mbox{if } p_{n}\, |\, l, \\
       0 & \mbox{otherwise.}
     \end{array}
   \right.
\end{eqnarray*}
Notice that $x_{j}[x_{i}] = x_{i}[x_{j}]$ for all $j, i$ by construction.
Now let $W$ denote the linear subspace of $\ell_{p}$ spanned by the set $S = \{ x_{n} : n \geq 1 \}$.
As $x_{n}$ is the only vector in $S$ whose $p_{n}$th coordinate is non-zero we see that $S$ is linearly
independent and hence $W$ is infinite-dimensional.
Consider two non-zero vectors $x, y \in W$. Say,
\begin{eqnarray*}
x = \sum\limits_{j \in J} \kappa_{j} x_{j} \mbox{ and } y = \sum\limits_{k \in K} \upsilon_{k} x_{k} \in W,
\end{eqnarray*}
where $\kappa_{j}, \upsilon_{k} \not= 0$ for all $j \in J, k \in K$. We claim that $x$ and $y$
do not have disjoint support. Indeed, if there exists an $i \in J \cap K$, then both $x$ and $y$ are non-zero in
the $p_{i}$th coordinate. On the other hand, if $j \in J$, $k \in K$ and $J \cap K = \varnothing$, then
$x(p_{j}p_{k})= \kappa_{j}2^{-p_{j}p_{k}}$ and $y(p_{j}p_{k}) = \upsilon_{k}2^{-p_{j}p_{k}}$ are both non-zero.
Thus the infinite-dimensional linear subspace $W \subset \ell_{p}$ does not have Property E. However,
as we have noted that any two basis vectors $x_i,x_j$ of $W$
satisfy the conditions of Lemma \ref{vds:lem}, we see that $W$ is virtually degenerate.
Note also that $W$ has the stronger property that any finite set of vectors in $W$ have intersecting support.
\end{proof}

\begin{remark}\label{vds:rem}
It is worth noting that the preceding construction may be tweaked so that the resulting infinite-dimensional
linear subspace $W \subset \ell_{p}$ does not satisfy Property E or the condition in Lemma \ref{vds:lem}. As before we let
$p_{n}$ denote the $n$th prime number but this time we define $x_{n} = (x_{n}(l)) \in \ell_{p}$ as follows:

\begin{eqnarray*}
x_n(l) &=&\left\{
\begin{array}{ll}
p_n^{-l} & \mbox{if } p_n\,|\,l,\\
0&\mbox{otherwise.}
\end{array}
\right.
\end{eqnarray*}

Now let $W$ denote the linear subspace of $\ell_p$ spanned by the set $S = \{x_n : n \geq 1 \}$.
As $x_n$ is the only vector in $S$ whose $p_n$th coordinate is non-zero we see that $S$ is linearly independent
and hence $W$ is infinite-dimensional. Consider two linearly independent vectors $x, y \in W$. Say,
\begin{eqnarray*}
x= \sum\limits_{j \in J} \kappa_jx_j \mbox{ and } y=\sum\limits_{k \in K} \upsilon_k x_k \in W,
\end{eqnarray*}
where $\kappa_j,\upsilon_k \not=0$ for all $j \in J$, $k \in K$.
(If $j \notin J$, then we may set $\kappa_{j} = 0$, and so on.)
We claim that $x$ and $y$ have intersecting support, and that
$x[y]$ and $y[x]$ are linearly independent. Indeed, if $x$ and $y$ share identical basis vectors then we need only consider
the coordinates $L = \{p_j\,|\,j \in J\}$. These coordinates lie in the support of $x$ and $y$, and if $x$ and $y$ were linearly
dependent on $L$ we would have $\kappa_j=c\upsilon_j$ for some non-zero constant $c$ and all $j \in J$, and this would
imply that $x$ and $y$ are linearly dependent. If $x$ and $y$ do not share identical basis vectors, then we may assume
without loss of generality that $J \setminus K \not= \varnothing$. If $j \in J \setminus K$ and $k \in K$, then $x$ is zero at
at most one coordinate among $p_jp_k,(p_jp_k)^2,(p_jp_k)^3,\dots$ by the uniqueness of any solution to 
$\kappa_jp_j^{-l}+\kappa_kp_k^{-l}=0$ with respect to $l$.
Moreover, $y$ is non-zero on all of these coordinates. The vectors $x$ and $y$ are linearly independent when restricted to any two 
of these coordinates on which $x$ is non-zero (by the uniqueness of any solution to
$\kappa_jp_j^{-l}+cp_k^{-l}=0$ with respect to $l$),
thereby showing that $x[y]$ and $y[x]$ are linearly independent. It follows
that $W$ does not satisfy the hypotheses of Lemma \ref{vds:lem}.
\end{remark}

We conclude this section with a comment on the special case $p = 2$. It is clear that every linear subspace of
$L_{2}(\Omega, \mu)$ admits a non-degenerate balanced simplex. Therefore no linear subspace of $L_{2}(\Omega, \mu)$
has strict $2$-negative type by Corollary \ref{cor:H1}. (In fact, no linear subspace of any normed space has
strict $2$-negative type by mid-point convexity.) This contrasts nicely with the case $0 < p < 2$ where,
as we have noted, $L_{p}(\Omega, \mu)$ may admit linear subspaces of strict $p$-negative type. For instance,
there are linear subspaces of $L_{p}[0,1]$ ($0 < p < 2$) that are linearly isometric to $L_{2}[0,1]$. Such subspaces
have $2$-negative type and hence strict $p$-negative type by Theorem \ref{LW1}.

\section{Open problems}\label{sec:7}

In the case $p > 2$ the classification of all non-trivial $p$-polygonal equalities in $L_{p}(\Omega, \mu)$ has not been
completely settled by the techniques developed in this paper. This is because one cannot argue on the basis of strict
$p$-negative type for values of $p$ in the range $(2, \infty)$. There are two impediments. One is described in Remark \ref{Kold}.
The second impediment is that Corollary \ref{cor:simplex} does not hold for values of $p$ in the range $(2, \infty)$.
Lemma \ref{vd:lemma} shows that virtual degeneracy is sufficient in the case $p > 2$. We do not know if it is necessary.

For each $p > 0$ let $\mathcal{C}_{p}$ denote the Schatten $p$-class. It is well-known that $\mathcal{C}_{2}$ is a
Hilbert space under the inner product $\langle x,y \rangle = tr(y^{\ast}x)$, $x,y \in \mathcal{C}_{2}$.
Thus the equality (\ref{sq:lem2}) stated
in Lemma \ref{thm:inner} is valid for $\mathcal{C}_{2}$. On the other hand, provided $p \not= 2$, $\wp(\mathcal{C}_{p})
= 0$. This is due to Lennard \textit{et al}.\ \cite{Ltw} in the case $p > 2$, and Dahma and Lennard \cite{Dah} in the case $0 < p < 2$.
It therefore makes sense to ask whether or not $\mathcal{C}_{p}$, $p \not= 2$, admits any non-trivial p-polygonal equalities,
and if so, whether or not they can be classified geometrically.
If, for example, there exists a finite metric subspace $X \subset \mathcal{C}_{p}$ such that $\wp(X) = p$, then
$\mathcal{C}_{p}$ will admit a non-trivial $p$-polygonal equality.
This is a consequence of Theorem \ref{LW2} and it motivates a more general problem. Given a Banach space $B$, determine
all values $p \geq \wp(B)$ such that $\wp(X) = p$ for some finite metric subspace $X \subset B$. It will then follow that
the Banach space $B$ admits a non-trivial $p$-polygonal equality for all such values of $p$.

Section \ref{sec:6} provides a glimpse of the complexity of virtually degenerate subspaces of $L_{p}(\Omega, \mu)$.
The interesting case is $0 < p < 2$ and it would seem to be a worthwhile project to develop new ways to construct or
identify virtually degenerate subspaces of $L_{p}(\Omega, \mu)$. The most challenging problem would appear to be
the development of necessary and sufficient conditions for a linear subspace of $L_{p}(\Omega, \mu)$ to be virtually
degenerate.

\section*{Acknowledgments}
The research presented in this paper was initiated at the 2011 Cornell University \textit{Summer Mathematics Institute}
(SMI) and completed under the auspices of the \textit{Visiting Researcher Programme} at the University of South Africa (UNISA).
The authors would like to thank the Department of Mathematics and the Center for Applied Mathematics at Cornell University
for supporting this project, and the National Science Foundation for its financial support of
the SMI through NSF grant DMS-0739338. We are very grateful for the additional financial support from UNISA,
Canisius College and the University of New South Wales that aided in the completion
of this paper. In addition, we would like to thank Petrus Potgieter, Willem Fouch\'{e}, Ian Doust and Stephen S\'{a}nchez
for their particularly helpful input on (infinitely many) preliminary drafts of this paper.
The last named author extends special thanks to the Australian Catholic University for additional support through their
programme of Honorary professorships.

\bibliographystyle{amsalpha}

\end{document}